\newcommand{\bee}{B_2^{\textrm{e}\textrm{e}}(\Phi,g)}
\newcommand{\beo}{B_2^{\text{e}\text{o}}(\Phi,g)}
\newcommand{\boe}{B_2^{\text{o}\text{e}}(\Phi,g)}
\newcommand{\boo}{B_2^{\text{o}\text{o}}(\Phi,g)}
\newtheorem{theorem}{Theorem}[section]
\newtheorem{lemma}{Lemma}[section]
\newtheorem{corollary}{Corollary}[section]
\newtheorem{conjecture}{Conjecture}[section]
\newtheorem{remark}{Remark}[section]
\begin{document}

\title{Zeros of quadratic Dirichlet $L$-functions in the hyperelliptic ensemble}
\author{H. M. Bui and Alexandra Florea}
\address{School of Mathematics, University of Manchester, Manchester M13 9PL, UK}
\email{hung.bui@manchester.ac.uk}
\address{Department of Mathematics, Stanford University, Stanford CA 94305, USA}
\email{amusat@stanford.edu}

\allowdisplaybreaks

\maketitle

\begin{abstract}
We study the $1$-level density and the pair correlation of zeros of quadratic Dirichlet $L$-functions in function fields, as we average over the ensemble $\mathcal{H}_{2g+1}$ of monic, square-free polynomials with coefficients in $\mathbb{F}_q[x]$. In the case of the $1$-level density, when the Fourier transform of the test function is supported in the restricted interval $(\frac{1}{3},1)$, we compute a secondary term of size $q^{-\frac{4g}{3}}/g$, which is not predicted by the Ratios Conjecture. Moreover, when the support is even more restricted, we obtain several lower order terms. For example, if the Fourier transform is supported in $(\frac{1}{3}, \frac{1}{2})$, we identify another lower order term of size $q^{-\frac{8g}{5}}/g$. We also compute the pair correlation, and as for the $1$-level density, we detect lower order terms under certain restrictions; for example, we see a term of size $q^{-g}/g^2$ when the Fourier transform is supported in $(\frac{1}{4},\frac{1}{2})$. The $1$-level density and the pair correlation allow us to obtain non-vanishing results for $L(\frac12,\chi_D)$, as well as lower bounds for the proportion of simple zeros of this family of $L$-functions.
\end{abstract}

\section{Introduction}
In this paper, we study the $1$-level density and the pair correlation of zeros of quadratic Dirichlet $L$-functions in function fields, which allow us to obtain non-vanishing results and lower bounds for the proportion of simple zeros. Moreover, for suitably restricted test functions, we compute several lower order terms, undetected by the powerful Ratios Conjecture \cite{ratios}.

In \cite{montgomery}, assuming the Riemann Hypothesis, Montgomery analyzed the pair correlation of the Riemann zeta-function and showed that for test functions whose Fourier transform is suitably restricted, it is equal to the pair correlation of eigenvalues of random matrices from the Gaussian Unitary Ensemble (GUE). Since Montgomery's paper, the problem of statistics of zeros in various families of $L$-functions has been well-studied. Rudnick and Sarnak \cite{rudnicksarnak}, \cite{rudnicksarnak2} assumed the Generalized Riemann Hypothesis (GRH) and showed that the $n$-level correlations of zeros of $\zeta(s)$ and $L(s,\pi)$ for primitive automorphic functions, agree with the corresponding GUE statistics, again for test functions suitably restricted.

One can study low-lying zeros of families of $L$-functions through the $n$-level densities. Katz and Sarnak \cite{katzsarnak}, \cite{katzsarnak2} conjectured that for each family of $L$-functions, there is an associated symmetry group, and the behavior of the normalized zeros near the central point agrees with the distribution of eigenvalues near $1$ in the symmetry group. There is now a wealth of literature in which the before-mentioned agreement between statistics of zeros and the distribution of eigenvalues in the classical compact groups has been proven, under certain restrictions and assumptions \cite{ILS}, \cite{rub},  \cite{hughesrudnick}, \cite{miller3}, \cite{gao}, \cite{young1}, \cite{young2}, \cite{hughesmiller}, \cite{err}.

While the Katz and Sarnak conjectures predict the main term for the $1$-level density, which is independent of the considered family of $L$-functions, we are also interested in the lower order terms, which depend on the arithmetic of the family. Relying on the Hardy-Littlewood conjecture, Bogomolny and Keating \cite{bg} proposed a more precise formula for the pair correlation function for $\zeta(s)$, which contains arithmetical information not detected by random matrix models. Furthermore, using the Ratios Conjecture of Conrey, Farmer and Zirnbauer \cite{ratios}, one can compute the $1$-level density for different families of $L$-functions, including lower order terms, specific to each family. This powerful tool allows us to write down predicted answers for various statistics of zeros, including all terms down to $X^{-\frac{1}{2}+\varepsilon}$; for example, one can compute all the $n$-level correlations of zeros of $\zeta(s)$ \cite{conreysnaith2}. For a more detailed discussion of applications of the Ratios Conjecture, we refer the readers to \cite{conreysnaith}. Lower order terms have been isolated in several families of $L$-functions under certain conditions \cite{millersymplectic}, \cite{millerorthogonal}, \cite{hks}, \cite{millerorthogonal2}, and shown to agree with the terms predicted by the Ratios Conjecture. 

In the unitary case, Fiorilli and Miller \cite{fiorillimiller} studied the $1$-level density of zeros of Dirichlet $L$-functions, for all characters modulo $q$ with $\frac Q2<q \leq Q$. For test functions with Fourier transform supported in $(-\frac{3}{2},\frac{3}{2})$, they, assuming GRH, discovered a new lower order term of size $Q^{-\frac{1}{2}}/\log Q$, undetected by the Ratios Conjecture. This is the first family in which a lower order term undetected by the Ratios Conjecture has been isolated. Our function field calculation of the $1$-level density of zeros of quadratic Dirichlet $L$-functions has the same flavor; we find a term of size 
$q^{-\frac{4g}{3}}/g$ (which translates to a term of size $ X^{-\frac{2}{3}} / \log X $ in the number field setting) when the support of the Fourier transform of the test function is sufficiently restricted. This term is smaller than the error $O_\varepsilon(X^{-\frac{1}{2} +\varepsilon})$ predicted by the Ratios Conjecture. In their paper \cite{fiorillimiller}, Fiorilli and Miller remarked that the error $O\big(Q^{-\frac{1}{2}+o(1)}\big)$ is best possible in the family of all Dirichlet $L$-functions when the support is large enough (beyond $1$), and wondered if that applies to other families. Our results do not shed light on this question; however, when the support of the Fourier transform is below $1$, the error coming from the Ratios Conjecture is not the best possible.

The $1$-level density of the family $L(s,\chi_d)$ was computed by \"Ozl\"uk and Snyder \cite{ozluksnyder} for test functions with Fourier transform supported in $(-2,2)$ on GRH, and their computations show a discontinuity in the density function at $1$. As a corollary, they proved that if GRH holds, then at least $\frac{15}{16}$ of $L(\frac12,\chi_d)$, for $d$ a fundamental discriminant with $|d| \leq X$, do not vanish. Soundararajan \cite{sound} later obtained a slightly weaker proportion, $\frac78$, without relying on GRH by studying the mollified moments of $L(s,\chi_d)$. 

Using the Ratios Conjecture, Conrey and Snaith \cite{conreysnaith} obtained an asymptotic formula for the $1$-level density in this symplectic family of $L$-functions, including lower order terms, with an error of square-root size of the main term. Miller \cite{millersymplectic} verified the Ratios Conjecture prediction for the $1$-level density for test functions with Fourier transform supported in $(-\frac13,\frac13)$, and showed that there is perfect agreement up to $X^{-\frac{1}{2}+\varepsilon}$. We also note the paper of Fiorilli, Parks and S{\"o}dergren \cite{fiorilli2}, which, under GRH, gives an asymptotic expansion in descending powers of $\log X$ when the support is in $(-2,2)$.


Here we work in $\mathbb{F}_q[x]$ with $q$ odd. Denote by $\mathcal{H}_{2g+1}$ the ensemble of monic, square-free polynomials of degree $2g+1$ over $\mathbb{F}_q[x]$. Let $\phi(\theta) = \sum_{|n| \leq N} \widehat{\phi}(n) e(n \theta)$ (with $e(x) = e^{2 \pi i x}$) be a real, even trigonometric polynomial, and let $\Phi(2g \theta) = \phi(\theta)$. Rudnick \cite{rudnicktraces} obtained an asymptotic formula for the $1$-level density 
$$ \frac{1}{ |  \mathcal{H}_{2g+1}|} \sum_{D \in \mathcal{H}_{2g+1}} \sum_{j=1}^{2g} \Phi(2g \theta_{j,D})$$ as $q$ is fixed and $g \to \infty$, where $\theta_{j,D}$ are defined in \eqref{3}, by computing the mean values of traces of powers of the Frobenius, averaged over the ensemble $\mathcal{H}_{2g+1}$. He showed that when $\widehat{\Phi}$ is supported in $(-2,2)$, the above is equal to
$$ \widehat{\Phi}(0) - \frac{1}{g} \sum_{n \leq g} \widehat{\Phi} \Big(  \frac{n}{g} \Big)  + \frac{ \text{dev}(\Phi)}{g} + o \big( 1/g \big),$$ where
$$\text{dev}(\Phi) = \widehat{\Phi}(0) \sum_{P\in\mathcal{P}} \frac{ d(P)}{|P|^2-1} - \frac{\widehat{\Phi}(1)}{q-1},$$
with the sum above being over monic irreducible polynomials $P$ of degree $d(P)$. 

We remark that for Rudnick's result, $q$ is fixed and $g \to \infty$. The other limit, of $q \to \infty$, can be treated by applying the equidistribution results of Katz and Sarnak \cite{katzsarnak}. Using basic facts about $L$-functions, $L(s,\chi_D)$ can be written as the characteristic polynomial of a certain unitary symplectic matrix. As $q \to \infty$, the conjugacy classes of these matrices become equidistributed in the group $\text{USp}(2g)$, so computing any eigenvalue statistics can be done by calculating the corresponding matrix statistics.

In our paper, we compute the $1$-level density in the ensemble $\mathcal{H}_{2g+1}$, uniformly in $q$ and $g$, and identify some lower order terms, when the support of the test function $\Phi$ is sufficiently restricted. Our results extend the result of Rudnick mentioned above. We also compute the pair correlation of zeros in this family of $L$-functions, and as for the $1$-level density, isolate several lower order terms. The paper is organized as follows. We discuss our main results in the next section. In Section \ref{preliminaries}, we provide some background information on $L$-functions over function fields, and prove several results we will use throughout the paper. Then in Section \ref{1density}, we compute the $1$-level density, and as a corollary, obtain the proportion of non-vanishing of $L(\frac12,\chi_D)$ in Section \ref{nonvan}. The pair correlation  is studied in Section \ref{2correlation}, and we prove the corollary for simple zeros in Section \ref{simplezeros}. Lastly, in the appendix, we deduce from the Ratios Conjecture the first lower order terms in the $1$-level density, and match them to the ones we already calculated. Note that a similar computation was carried out in \cite{andradekeatingconj}, but we decided to include it here, for the sake of completeness.

\section{Statement of results}

Throughout the paper we will take $q$ to be odd, with $q=p^k$. The letter $P$ will always denote a monic irreducible polynomial over $\mathbb{F}_q[x]$. The set of monic irreducible polynomials is denoted by $\mathcal{P}$, and the sets of those of degree $n$ and degree at most $n$ are denoted by $\mathcal{P}_n$ and $\mathcal{P}_{\leq n}$, respectively. For a polynomial $f\in \mathbb{F}_q[x]$, we will denote its degree by $d(f)$, and its norm $|f|$ is defined to be $q^{d(f)}$. Note that
$$|\mathcal{H}_d| = 
\begin{cases}
q & \mbox{ if } d=1, \\
q^{d-1}(q-1) & \mbox{ if } d \geq 2.
\end{cases}
$$
For any function $F$ on $\mathcal{H}_{2g+1}$, the expected value of $F$ is defined by
$$ \big\langle F \big\rangle_{\mathcal{H}_{2g+1}} := \frac{1}{| \mathcal{H}_{2g+1} |} \sum_{D \in \mathcal{H}_{2g+1}} F(D).$$

We now present the results of the paper. The following theorem provides an asymptotic formula for the $1$-level density.
\label{mainres}
\begin{theorem} 
\label{1level}
Let  $\phi(\theta) = \sum_{|n| \leq N} \widehat{\phi}(n) e(n \theta)$ be a real, even trigonometric polynomial, $\Phi(2g \theta) = \phi(\theta)$, and
\[
\Sigma_1(\Phi,g,D)=\sum_{j=1}^{2g}\Phi(2g\theta_{j,D}).
\]
If $\frac{2g}{2K+1}\leq N\leq\frac{2g-1}{2K-1}$ for some $K\geq1$, then we have
\begin{align}
&\Big\langle \Sigma_1(\Phi,g,D)\Big\rangle_{\mathcal{H}_{2g+1}}  =\widehat{\Phi}(0)-\frac{1}{g} \sum_{n\leq\frac{N}{2}}\widehat{\Phi}\Big(\frac{n}{g}\Big)+c(\Phi,g)-\sum_{k= K}^{K'}\frac{\mathds{1}_{(2k+1)|g}}{g(q-1)}\widehat{\Phi}\Big(\frac{1}{2k+1}\Big) q^{-\frac{4kg}{2k+1}} \nonumber \\
&\quad+\frac{1}{g}\sum_{k=K}^{K'}\sum_{\frac{g+1}{2k+1}\leq n\leq \min\{\frac{N}{2},\frac{g-1}{2k}\}} \widehat{\Phi}\Big(\frac{n}{g}\Big) q^{-4kn}+O \big( q^{\min\{\frac{N}{2},\frac{g-1}{2K}\}-2g-\frac12}/K \big)+O\big(q^{\frac{g-1}{K'+1}-2g}/g\big) \label{density11}
\end{align}
for any $K\leq K'\leq g$, where
\[
c(\Phi,g)=\frac{1}{g}\sum_{n\leq \frac{N}{2}}\widehat{\Phi}\Big(\frac{n}{g}\Big)\sum_{\substack{P\in\mathcal{P}_{n/r}\\r\geq1}}\frac{d(P)}{|P|^{r}(|P|+1)}.
\]
In the case $2g\leq N<4g$ we have
\begin{align*}
\Big\langle \Sigma_1&(\Phi,g,D)\Big\rangle_{\mathcal{H}_{2g+1}}  =\widehat{\Phi}(0)-\frac{1}{g} \sum_{n\leq g}\widehat{\Phi}\Big(\frac{n}{g}\Big)+c(\Phi,g)-\frac{\widehat{\Phi}(1)}{g(q-1)}+O \big( q^{\frac N2-2g-\frac12} \big).
\end{align*}
\end{theorem}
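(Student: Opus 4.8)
The plan is to use the explicit formula to convert the sum over zeros into a sum of averaged quadratic symbols $\langle\chi_D(f)\rangle_{\mathcal H_{2g+1}}$, and then to evaluate these averages precisely, not merely to leading order, so that the lower-order terms survive. First I would expand the test function: writing $\phi(\theta)=\sum_{|n|\le N}\widehat\phi(n)e(n\theta)$, using $\sum_{j=1}^{2g}e(n\theta_{j,D})=\mathrm{Tr}(\Theta_D^n)$, and folding $\pm n$ together via the evenness of $\Phi$ and the symmetry of the spectrum, one obtains
\[
\Sigma_1(\Phi,g,D)=\widehat\Phi(0)+\frac1g\sum_{n=1}^{N}\widehat\Phi\Big(\frac{n}{2g}\Big)\,\mathrm{Tr}(\Theta_D^n),
\]
the term $n=0$ producing $\widehat\Phi(0)$. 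Comparing the Euler product with the zero-factorization of $L(u,\chi_D)$ gives $\mathrm{Tr}(\Theta_D^n)=-q^{-n/2}\sum_{d(f)=n}\Lambda(f)\chi_D(f)$, so after averaging everything reduces to $\langle\chi_D(f)\rangle_{\mathcal H_{2g+1}}$ for prime powers $f=P^a$ of degree $n\le N$. I would then split the sum over $f$ by the parity of $a$: even $a$ (so $f$ is a square, forcing $n$ even) carries the main term, while odd $a$ (so $f$ is non-square) carries the lower-order terms.

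For the square part, $f=P^{2b}$ and $\chi_D(f)$ is the indicator that $P\nmid D$, so $\langle\chi_D(f)\rangle$ is the density of $D\in\mathcal H_{2g+1}$ coprime to $P$. Reading this off the generating series $\sum_{D\text{ sq-free}}u^{d(D)}=(1-qu^2)/(1-qu)$ with the Euler factor at $P$ removed, the pole at $u=1/q$ gives $\langle\chi_D(P^{2b})\rangle=|P|/(|P|+1)+O(q^{-2g})$, the error coming from the unit-circle zeros of $1+u^{d(P)}$. The identity $q^{-m}\sum_{b\,d(P)=m}d(P)=1$ then turns the factor $1$ into the main term $-\frac1g\sum_{m\le N/2}\widehat\Phi(m/g)$, while the secondary factor $-1/(|P|+1)$ reproduces exactly $c(\Phi,g)$; the residual $O(q^{-2g})$ is harmless. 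This part is essentially bookkeeping.

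The heart of the matter, and the main obstacle, is the non-square part, which produces all the remaining terms. For $f=P^a$ with $a$ odd one has $\chi_D(P^a)=\chi_P(D)$, the quadratic character modulo $P$ evaluated at $D$, and I would evaluate $\sum_{D\in\mathcal H_{2g+1}}\chi_P(D)$ through the generating-series identity of Section~\ref{preliminaries},
\[
\sum_{D\text{ sq-free}}\chi_P(D)w^{d(D)}=L(w,\chi_P)\,\frac{1-qw^2}{1-w^{2d(P)}},
\]
so that $T(P):=\sum_{D\in\mathcal H_{2g+1}}\chi_P(D)$ is the coefficient of $w^{2g+1}$. Expanding $(1-w^{2d(P)})^{-1}=\sum_{k\ge0}w^{2kd(P)}$ organizes $T(P)$ into contributions indexed by exactly the parameter $k$ of the theorem: for a prime $P$ of even degree $2n$ the $k$-th contribution is the coefficient of $w^{2g+1-4kn}$ in $L(w,\chi_P)(1-qw^2)$, which can be nonzero only for $n$ in essentially the range $\frac{g+1}{2k+1}\le n\le\frac{g-1}{2k}$ of the statement. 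Applying the functional equation of $L(w,\chi_P)$ to this high-degree coefficient and summing over $P$ yields the resonant contributions of size $q^{-4kn}$; at the lower edge $n=\frac{g}{2k+1}$ (an integer precisely when $(2k+1)\mid g$) the relevant coefficient is the top coefficient of $L(w,\chi_P)(1-qw^2)$, whose evaluation produces the isolated term $-\mathds{1}_{(2k+1)\mid g}\,\widehat\Phi(\tfrac{1}{2k+1})\,q^{-4kg/(2k+1)}/(g(q-1))$, with the constant $1/(q-1)$ and the root numbers coming from the functional equation. The case $k=0$ is the classical boundary term $-\widehat\Phi(1)/(g(q-1))$, which appears only once the support reaches the central degree $n=2g$. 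Odd-degree traces carry no exact resonance, since $\tfrac{2g}{2k+1}$ is never an odd integer, and contribute only to the error. The difficulty throughout is carrying out this dual evaluation uniformly in $q$ and $g$, pinning down the exact constants $1/(q-1)$ and the divisibility indicator, and controlling the cancellation that makes the surviving terms as small as $q^{-4kn}$.

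Finally I would use the hypothesis $\frac{2g}{2K+1}\le N\le\frac{2g-1}{2K-1}$ to fix which terms are visible: it forces the smallest resonance degree not exceeding $N$ to be the one with $k=K$, so only $k\in\{K,\dots,K'\}$ appear explicitly, $K'\le g$ being a free truncation level. The error $O(q^{\min\{\frac N2,\frac{g-1}{2K}\}-2g-\frac12}/K)$ then bounds the first omitted coefficient at level $k=K$, while $O(q^{\frac{g-1}{K'+1}-2g}/g)$ bounds the geometric tail of resonances with $k>K'$; both follow from the prime polynomial bound $|\mathcal P_n|\le q^n/n$ together with the decay $q^{-4kn}$, with no hidden dependence on $q$ or $g$, which is what gives the claimed uniformity. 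The second displayed formula, for $2g\le N<4g$, is the same computation in the regime where the support reaches the central degree: the square sum now runs up to $m=g$, the only surviving resonance is the $k=0$ term $-\widehat\Phi(1)/(g(q-1))$, and all higher resonances are absorbed into $O(q^{\frac N2-2g-\frac12})$.
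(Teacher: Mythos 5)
Your outline is, in substance, the paper's own argument: the explicit-formula reduction, the split into square prime powers $f=P^{2b}$ and odd prime powers $f=P^{a}$, the evaluation of $\langle\chi_D(P^{2b})\rangle_{\mathcal{H}_{2g+1}}$ (this is Lemma~\ref{lm4}) producing the main term and $c(\Phi,g)$, and, for the non-square part, a square-free--sieve identity followed by a dual evaluation of the resulting high-degree coefficients. Your generating identity and coefficient extraction is exactly Lemma~\ref{5} (including the expansion of $(1-w^{2d(P)})^{-1}$, which is the paper's sum over $C\mid P^{\infty}$), and ``applying the functional equation of $L(w,\chi_P)$'' is exactly the Poisson summation of Lemmas~\ref{Poisson} and~\ref{poissonprime}: the reflected low-degree coefficients are the paper's dual sums over $V$, with the square $V$ giving the near-cancelling residual that produces the terms $q^{-4kn}$, the edge $d(P)=\frac{2g}{2k+1}$ giving the $\mathds{1}_{(2k+1)\mid g}$ terms, and odd-degree $P$ carrying no resonance. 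So the skeleton is correct and matches the paper.

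The genuine gap is in the error analysis, and it is not a technicality, because the whole content of the theorem is that the secondary terms exceed the error. You assert that both error terms ``follow from the prime polynomial bound $|\mathcal{P}_n|\le q^n/n$ together with the decay $q^{-4kn}$, with no hidden dependence on $q$ or $g$.'' They do not. After the reflection step, the non-resonant part of each coefficient is a character sum over primes against a non-square modulus $V$; bounding it by prime counting alone gives a total contribution of order $q^{N-2g-1}/g$. For $K=1$ and $N$ near $\frac{2g-1}{2K-1}=2g-1$ this is about $q^{-2}/g$, which utterly swamps the secondary terms of size $q^{-\frac{4g}{3}}/g$ that the theorem isolates; even at the bottom edge $N=\frac{2g}{2K+1}$ the trivial bound has exactly the size of the claimed secondary term, so nothing is isolated anywhere in the range. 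The half-integer exponent in $O\big(q^{\min\{\frac N2,\frac{g-1}{2K}\}-2g-\frac12}/K\big)$ is the signature of square-root cancellation: the paper obtains it from the Weil bound (Lemma~\ref{sumprimes}, i.e.\ RH for curves) applied to $\sum_{P}\chi_V(P)$ for $V\ne\square$, and it also needs the Polya--Vinogradov type bound of Lemma~\ref{pv} for the preliminary reductions (the $r\ge3$ prime powers, the trivial treatment of small-degree primes, and the truncation at $K'$). Relatedly, your reading of the first error term as bounding ``the first omitted coefficient at level $k=K$'' is off: nothing is omitted at level $K$; that term bounds precisely the non-square dual contributions over the whole range, which is where the Weil input is indispensable. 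Until that ingredient is put in, the proposed proof cannot yield the stated error terms.
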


\begin{remark}
\emph{For example, when $g \leq N \leq 2g-1$ we obtain
\begin{align*}
\Big\langle \Sigma_1(\Phi,g,D)\Big\rangle_{\mathcal{H}_{2g+1}}  =&\widehat{\Phi}(0)-\frac{1}{g} \sum_{n\leq\frac N2}\widehat{\Phi}\Big(\frac{n}{g}\Big)+c(\Phi,g) -\frac{\mathds{1}_{3|g}}{g(q-1)} \widehat{\Phi}\Big(\frac{1}{3}\Big)q^{-\frac{4g}{3}} \\
&\qquad\qquad+\frac{1}{g}\sum_{\frac{g+1}{3}\leq n\leq \frac{g-1}{2}} \widehat{\Phi}\Big(\frac{n}{g}\Big) q^{-4n} +O \big(q^{-\frac{3g}{2}-1} \big).
\end{align*}
When $\frac{2g}{3}\leq N\leq g-1$ we have
\begin{align*}
&\Big\langle \Sigma_1(\Phi,g,D)\Big\rangle_{\mathcal{H}_{2g+1}}  =\widehat{\Phi}(0)-\frac{1}{g} \sum_{n\leq\frac N2}\widehat{\Phi}\Big(\frac{n}{g}\Big)+c(\Phi,g)\\
&\qquad\quad-\frac{\mathds{1}_{3|g}}{g(q-1)} \widehat{\Phi}\Big(\frac{1}{3}\Big)q^{-\frac{4g}{3}}-\frac{\mathds{1}_{5|g}}{g(q-1)}\widehat{\Phi}\Big(\frac{1}{5}\Big) q^{-\frac{8g}{5}}\\
&\qquad\qquad\qquad\quad+\frac{1}{g}\sum_{\frac{g+1}{3}\leq n\leq \frac N2} \widehat{\Phi}\Big(\frac{n}{g}\Big) q^{-4n}+\frac{1}{g}\sum_{\frac{g+1}{5}\leq n\leq \frac{g-1}{4}} \widehat{\Phi}\Big(\frac{n}{g}\Big) q^{-8n}+O \big( q^{\frac N2-2g-\frac12} \big).
\end{align*}
Hence, for example, in the restricted range $\frac{2g}{3} \leq N < \frac{4g}{5}  - 3 \log_q g$, we get a formula with two secondary main terms.}

\emph{With $\frac{2g}{5}\leq N\leq \frac{g-1}{2}$ we get
\begin{align*}
&\Big\langle\Sigma_1(\Phi,g,D)\Big\rangle_{\mathcal{H}_{2g+1}}  =\widehat{\Phi}(0)-\frac{1}{g} \sum_{n\leq\frac N2}\widehat{\Phi}\Big(\frac{n}{g}\Big)+c(\Phi,g)-\frac{\mathds{1}_{5|g}}{g(q-1)} \widehat{\Phi}\Big(\frac{1}{5}\Big)q^{-\frac{8g}{5}}\\
&\qquad\quad-\frac{\mathds{1}_{7|g}}{g(q-1)}\widehat{\Phi}\Big(\frac{1}{7}\Big) q^{-\frac{12g}{7}}-\frac{\mathds{1}_{9|g}}{g(q-1)}\widehat{\Phi}\Big(\frac{1}{9}\Big) q^{-\frac{16g}{9}}+\frac{1}{g}\sum_{\frac{g+1}{5}\leq n\leq \frac N2} \widehat{\Phi}\Big(\frac{n}{g}\Big) q^{-8n}\\
&\qquad\qquad\quad\quad+\frac{1}{g}\sum_{\frac{g+1}{7}\leq n\leq \frac{g-1}{6}} \widehat{\Phi}\Big(\frac{n}{g}\Big) q^{-12n}+\frac{1}{g}\sum_{\frac{g+1}{9}\leq n\leq \frac{g-1}{8}} \widehat{\Phi}\Big(\frac{n}{g}\Big) q^{-16n}+O \big(q^{\frac N2-2g-\frac12} \big).
\end{align*}
So, again in the restricted range $\frac{2g}{5} \leq N \leq \frac{4g}{9}- 3 \log_ q g$, we see three secondary main terms. More generally, in the $1$-level density formula \eqref{density11}, we see in total $K+1$ secondary main terms for the restricted range $\frac{2g}{2K+1} \leq N \leq \frac{4g}{4K+1} - 3 \log_q g$. The rest of the terms will be incorporated in the error term.}
\end{remark}

\begin{remark}\emph{Note that when $\frac{2g}{3} \leq N <2g$, there is a secondary main term of size $q^{-\frac{4g}{3}}/g$ in the formula for the $1$-level density. This lower order term should be compared to the secondary term computed for the first moment of the family of quadratic Dirichlet $L$-functions in the functions field setting in \cite{aflorea}. The moment conjecture of Conrey, Farmer, Keating, Rubinstein and Snaith \cite{cfkrs} in the number field case (and the corresponding conjecture in function fields \cite{andradekeatingconj}) predicts that the mean value of this family of $L$-functions is asymptotic to $ P_1(\log X)$, where $P_1$ is a linear polynomial, with an error term bounded by $O_\varepsilon(X^{-\frac{1}{2}+\varepsilon})$. In the hyperelliptic ensemble, a new term of size $g q^{-\frac{4g}{3}}$ (which translates to $X^{-\frac 23}\log X$) was computed by Florea \cite{aflorea}, smaller than the conjectured error term. This is similar to what happens for the $1$-level density: the predicted error from the Ratios Conjecture is of size $O_\varepsilon(X^{-\frac{1}{2}+\varepsilon})$, but we can compute a term of size $X^{-\frac 23}/\log X$ under certain restrictions. In fact, both these secondary terms that appear in the first moment calculation and the density, arise in a similar way, after using the Poisson summation formula in function fields and evaluating the contribution from $V$ a square polynomial, where $V$ is the dual parameter in the Poisson formula.}
\end{remark}

From Theorem \ref{1level}, we obtain the following result on the non-vanishing of $L(\frac12,\chi_D)$.
\begin{corollary}
\label{nonvanishing}
We have
\[
\frac{1}{|\mathcal{H}_{2g+1}|}\Big|\big\{D\in\mathcal{H}_{2g+1}:L(\tfrac{1}{2},\chi_D)\ne0\big\}\Big|\geq \frac{19-\cot(\frac{1}{4})}{16}+o(1)=0.9427\ldots+o(1)
\]
as $g\rightarrow\infty$.
\end{corollary}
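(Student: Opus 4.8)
The plan is to run the standard positivity argument that turns the upper bound on the $1$-level density coming from Theorem \ref{1level} into a lower bound for non-vanishing, the one new ingredient being a parity observation that saves a factor of $2$.

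First I would fix a real, even trigonometric polynomial $\phi$ with $\Phi(2g\theta)=\phi(\theta)\ge 0$ and $\Phi(0)>0$, whose transform $\widehat\Phi$ is supported in $[-2,2]$, i.e. $2g\le N<4g$. Since $L(s,\chi_D)$ is the characteristic polynomial of a symplectic matrix, the multiset $\{\theta_{j,D}\}_{j=1}^{2g}$ is invariant under $\theta\mapsto-\theta$; hence if $L(\tfrac12,\chi_D)=0$ the angle $0$ occurs with even multiplicity (equivalently the central zero of $L(s,\chi_D)$ is at least double), so it contributes at least $2\Phi(0)$ to $\Sigma_1(\Phi,g,D)$. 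As $\Phi\ge0$ all other summands are nonnegative, whence $\Sigma_1(\Phi,g,D)\ge 2\Phi(0)$ for every $D$ with $L(\tfrac12,\chi_D)=0$. Averaging over $\mathcal H_{2g+1}$ and inserting Theorem \ref{1level} in the range $2g\le N<4g$, the terms $c(\Phi,g)$, $\widehat\Phi(1)/(g(q-1))$ and the error $O(q^{N/2-2g-1/2})$ tend to $0$ while $\tfrac1g\sum_{n\le g}\widehat\Phi(n/g)\to\int_0^1\widehat\Phi$, so that
\[
\frac{|\{D\in\mathcal H_{2g+1}:L(\tfrac12,\chi_D)=0\}|}{|\mathcal H_{2g+1}|}\le\frac{1}{2\Phi(0)}\Big(\widehat\Phi(0)-\int_0^1\widehat\Phi(x)\,dx\Big)+o(1).
\]

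Second, I would minimize the resulting ratio over admissible $\Phi$. By the Fej\'er--Riesz/Krein factorization of nonnegative band-limited functions I would write $\Phi=|\psi|^2$ with $\widehat\psi=g$ real, even and supported in $[-1,1]$; then $\Phi(0)=(\int_{-1}^1 g)^2$, $\widehat\Phi(0)=\int_{-1}^1 g^2$ and $\widehat\Phi=g*g$, so that
\[
\frac{\widehat\Phi(0)-\int_0^1\widehat\Phi}{\Phi(0)}=\frac{\int_{-1}^1 g^2+\int_1^2 (g*g)(\xi)\,d\xi}{\big(\int_{-1}^1 g\big)^2}-\frac12 ,
\]
a quotient of quadratic forms in $g$. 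The Euler--Lagrange equation of this problem reads $4g(u)+2\int_{1-u}^1 g(v)\,dv=\lambda$ on $[0,1]$; differentiating twice gives $g''=-\tfrac14 g$, so the extremizer is $g(u)=A\cos(u/2)+B\sin(u/2)$, and the two endpoint relations fix $B/A=-\cos(\tfrac12)/(1+\sin(\tfrac12))$. Pairing the Euler--Lagrange identity with $g$ and evaluating, the minimum of the above quotient equals $\tfrac18\big(\cot(\tfrac14)-3\big)$. Combining with the first step gives the non-vanishing proportion at least $1-\tfrac12\cdot\tfrac18(\cot(\tfrac14)-3)=\frac{19-\cot(1/4)}{16}$, as claimed.

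The main obstacle is the extremal problem rather than the analytic input: one must verify that the cosine-type stationary function is a genuine global minimizer of the ratio of quadratic forms, that the factorization $\Phi=|\psi|^2$ costs no generality, and, most delicately, that the band-limited extremizer (which is not a trigonometric polynomial of the required degree) can be approximated by admissible $\phi$ with $N<4g$ so that the limiting functional $\widehat\Phi(0)-\int_0^1\widehat\Phi$ is attained as $g\to\infty$. Once these are in place the positivity/parity step and Theorem \ref{1level} assemble the bound directly.
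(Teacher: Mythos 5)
Your proposal is correct and follows essentially the same route as the paper: the positivity argument combined with the even multiplicity of the central zero gives the bound $1-p_0(g)\leq \frac{1}{2\Phi(0)}\big(\widehat{\Phi}(0)-\int_0^1\widehat{\Phi}(y)\,dy\big)+o(1)$, and your extremal problem (factorization $\Phi=|\psi|^2$ with $\widehat{\psi}$ supported in $[-1,1]$, Euler--Lagrange equation, cosine extremizer, value $\tfrac18(\cot\tfrac14-3)$) is exactly the one the paper solves by citing Proposition A.1 of Iwaniec--Luo--Sarnak, with the same extremal function and the same constant. The one point to repair is your parity step: invariance of the multiset $\{\theta_{j,D}\}$ under $\theta\mapsto-\theta$ does \emph{not} by itself force even multiplicity at the fixed point $\theta=0$, so you should justify the "at least double central zero" claim as the paper does, via the functional equation $\mathcal{L}(u,\chi_D)=(qu^2)^g\mathcal{L}\big(\frac{1}{qu},\chi_D\big)$ (or, equivalently, via the fact that the eigenvalue $1$ of a unitary symplectic matrix always has even multiplicity, which is the property your appeal to the symplectic structure actually requires).
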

We remark here that we can, using the usual test function $\Phi(x)=(\frac{\sin 2\pi x}{2\pi x})^2$, obtain the proportion of $\frac{15}{16}$ for the non-vanishing of $L(\frac12,\chi_D)$, as $D$ ranges over elements of $\mathcal{H}_{2g+1,q}$ and $g \to \infty$, which is the same proportion obtained by  \"Ozl\"uk and Snyder \cite{ozluksnyder} in the number field setting. Using the work of Iwaniec, Luo and Sarnak \cite{ILS}, this result can be slightly improved to give the above corollary.

Next, we turn our attention to the pair correlation of zeros in the hyperelliptic ensemble. 
\begin{theorem}
\label{paircorth}
Let
\[
\Sigma_2(\Phi,g,D)=\frac{1}{2g}\sum_{1\leq j, k\leq2g}\Phi\big(2g(\theta_{j,D}-\theta_{k,D})\big).
\]
If $\frac{g}{K+1}\leq N\leq\frac{g-1}{K}$ for some $K\geq1$, then we have
\begin{align*}
&\Big\langle \Sigma_2(\Phi,g,D)\Big\rangle_{\mathcal{H}_{2g+1}}=\widehat{\Phi}(0)+ \frac{1}{2g^2} \sum_{n \leq N} \widehat{\Phi} \Big( \frac{n}{2g} \Big) n\sum_{d|n}\frac {\alpha(d)}{d}q^{\frac nd-n}\\
&\quad\quad+\frac{1}{2g^2} \sum_{n \leq \frac N2} \widehat{\Phi} \Big( \frac{n}{g} \Big)+c_1(\Phi,g)+c_2(\Phi,g)+c_3(\Phi,g)+c_4(\Phi,g)\\
&\quad\qquad\qquad+\sum_{k= K}^{K'}\frac{\mathds{1}_{(k+1)|g}(k+1)}{2g^2(q-1)}  \widehat{\Phi} \Big( \frac{1}{2k+2} \Big)q^{-\frac{2kg}{k+1}}- \sum_{k= K}^{K'}\frac{k+1}{2g^2}  \sum_{\frac {g+1}{k+1}\leq n\leq N} \widehat{\Phi} \Big( \frac{n}{2g} \Big)q^{-2kn}\\
&\qquad\quad\quad\qquad\qquad+O\big(q^{\frac{3g}{2(K+1)}-2g-1}/g^2\big)+O\big( (\log K')q^{N-2g-1} \big)+O\big(K'q^{\frac{2(g-1)}{K'+1}-2g-1}/g^2\big)
\end{align*}
for any $K\leq K'<g$, where $\alpha(d)=\prod_{p|d}(1-p)$ and
\begin{align*}
&c_1(\Phi,g)= \frac{1}{2g^2} \sum_{n \leq N} \widehat{\Phi} \Big( \frac{n}{2g} \Big) \sum_{\substack{P \in \mathcal{P}_{n/r}\\r\geq 1}}\frac{d(P)^2}{|P|^{r}(|P|+1)},\\
&c_2(\Phi,g)=-\frac{1}{2g^2} \sum_{n \leq \frac N2} \widehat{\Phi} \Big( \frac{n}{g} \Big) \sum_{\substack{P \in \mathcal{P}_{n/r}\\r\geq 1}}\frac{d(P)^2}{|P|^{2r-2}(|P|+1)^2},\\
&c_3(\Phi,g)=-\frac{1}{g^2} \sum_{n \leq \frac N2} \widehat{\Phi} \Big( \frac{n}{g} \Big)\sum_{\substack{P \in \mathcal{P}_{n/r}\\r\geq 1}}  \frac{d(P)}{|P|^{r}(|P|+1)},\\
&c_4(\Phi,g)=\frac{1}{2g^2} \sum_{n \leq \frac N2} \widehat{\Phi} \Big( \frac{n}{g} \Big)\bigg( \sum_{\substack{P \in \mathcal{P}_{n/r}\\r\geq 1}}  \frac{d(P)}{|P|^{r}(|P|+1)}\bigg)^2.
\end{align*}
In the case $g\leq N<2g$ we have
\begin{align*}
&\Big\langle \Sigma_2(\Phi,g,D)\Big\rangle_{\mathcal{H}_{2g+1}}=\widehat{\Phi}(0)+ \frac{1}{2g^2} \sum_{n \leq N} \widehat{\Phi} \Big( \frac{n}{2g} \Big) n\sum_{d|n}\frac {\alpha(d)}{d}q^{\frac nd-n}+\frac{1}{2g^2} \sum_{n \leq \frac N2} \widehat{\Phi} \Big( \frac{n}{g} \Big)\\
&\qquad\quad+c_1(\Phi,g)+c_2(\Phi,g)+c_3(\Phi,g)+c_4(\Phi,g)+\frac{ \widehat{\Phi} ( \frac{1}{2})}{2g^2(q-1)} - \frac{1}{2g^2}  \sum_{g+1\leq n\leq N} \widehat{\Phi} \Big( \frac{n}{2g} \Big)\\
&\qquad\qquad\qquad+O\big(q^{-\frac g2-\frac12}/g^2\big)+O\big(q^{N-2g-1}\big)+O\big(q^{N-2g}/g^2\big).
\end{align*}
\end{theorem}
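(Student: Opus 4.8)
The plan is to reduce the pair correlation to averages of \emph{squared} prime sums via the explicit formula, and then to feed the products $f_1f_2$ into the same Poisson-summation machinery over $\mathbb{F}_q[x]$ that underlies the proof of Theorem \ref{1level}. First I would expand the trigonometric polynomial: writing $\phi(\theta)=\sum_{|n|\le N}\widehat{\phi}(n)e(n\theta)$, setting $S_n(D)=\sum_{j=1}^{2g}e(n\theta_{j,D})$, and using $\sum_{j,k}e(n(\theta_{j,D}-\theta_{k,D}))=|S_n(D)|^2$, one gets
$$\Sigma_2(\Phi,g,D)=\frac{1}{2g}\sum_{|n|\le N}\widehat{\phi}(n)\,\Big|\sum_{j=1}^{2g}e(n\theta_{j,D})\Big|^2=\frac{1}{2g}\sum_{|n|\le N}\widehat{\phi}(n)\,|S_n(D)|^2.$$
Under the normalization $\widehat{\phi}(n)=\frac{1}{2g}\widehat{\Phi}(\frac{n}{2g})$ relating the two transforms, the $n=0$ term contributes $\frac{1}{2g}\widehat{\phi}(0)(2g)^2=\widehat{\Phi}(0)$, the leading term. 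For $n\ne0$ I would use the explicit formula already employed for the $1$-level density, $S_n(D)=-q^{-n/2}\sum_{d(f)=n}\Lambda(f)\chi_D(f)$, together with the reality of $\chi_D$, to obtain
$$\big\langle|S_n(D)|^2\big\rangle_{\mathcal{H}_{2g+1}}=q^{-n}\sum_{d(f_1)=d(f_2)=n}\Lambda(f_1)\Lambda(f_2)\,\big\langle\chi_D(f_1f_2)\big\rangle_{\mathcal{H}_{2g+1}},$$
so that everything is reduced to the averaged character sum $\langle\chi_D(F)\rangle$ for $F=f_1f_2$ of degree $2n$.

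Next I would evaluate this averaged character sum by applying the Poisson summation formula over $\mathbb{F}_q[x]$ in exactly the way used for Theorem \ref{1level}, but now to $F=f_1f_2$ of degree $2n$ rather than to a single prime power of degree $n$. This splits $\langle\chi_D(F)\rangle$ into a principal part, nonzero only when $F$ is a perfect square and then equal to $\prod_{P\mid F}\frac{|P|}{|P|+1}$ up to a controlled error, and a dual sum whose only non-negligible contribution comes from the dual variable being a perfect square. Because here $\deg F=2n$, these dual-square contributions reproduce the analogues of Rudnick's secondary terms governed by the degree $2n$: a family of terms of size $q^{-2kn}$, together with boundary terms of size $q^{-2kg/(k+1)}$ arising at $n=g/(k+1)$, whence the indicator $\mathds{1}_{(k+1)\mid g}$.

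Then I would organize the square-type main terms. Restricting $\Lambda$ to prime powers, I write $f_i=P_i^{a_i}$ with $a_id(P_i)=n$; the condition that $F=P_1^{a_1}P_2^{a_2}$ be a perfect square forces either $P_1=P_2$ (hence $f_1=f_2$) or $P_1\ne P_2$ with $a_1,a_2$ both even. Inserting $\langle\chi_D(P^{2a})\rangle=\frac{|P|}{|P|+1}$ into the diagonal $f_1=f_2$ and summing the resulting prime-power series produces the leading term $\frac{1}{2g^2}\sum_{n\le N}\widehat{\Phi}(\frac{n}{2g})\,n\sum_{d\mid n}\frac{\alpha(d)}{d}q^{n/d-n}$ together with the correction $c_1(\Phi,g)$. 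The off-diagonal $P_1\ne P_2$ case factors to leading order into a product of two single-prime sums: its main part gives $\frac{1}{2g^2}\sum_{n\le N/2}\widehat{\Phi}(\frac{n}{g})$, and expanding the two local factors $\frac{|P_i|}{|P_i|+1}$ to the next order yields precisely $c_2,c_3$ and the squared single-prime sum $c_4$. This accounts for every non-secondary term in the statement.

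Finally I would assemble the dual-square contributions of the second step over $n\le N$ into the secondary sums $\sum_{k=K}^{K'}(\cdots)q^{-2kg/(k+1)}$ and $-\sum_{k=K}^{K'}(\cdots)\sum_n\widehat{\Phi}(\frac{n}{2g})q^{-2kn}$; the hypothesis $\frac{g}{K+1}\le N\le\frac{g-1}{K}$ is exactly what forces the smallest surviving index to be $k=K$, while $K'$ is a free truncation whose tail $k>K'$, together with the genuinely non-square off-diagonal contributions, is absorbed into the three error terms by replacing $\sum_D\chi_D(F)$ with its short Poisson dual and applying the prime polynomial theorem. The case $g\le N<2g$ I would treat separately: there $n$ may exceed $g$, so $\deg F=2n>2g$ and the dual term of degree $0$ now contributes at size $O(1)$ rather than exponentially, producing the extra main term $-\frac{1}{2g^2}\sum_{g+1\le n\le N}\widehat{\Phi}(\frac{n}{2g})$ and the term $\frac{\widehat{\Phi}(1/2)}{2g^2(q-1)}$, in direct analogy with the $2g\le N<4g$ regime of Theorem \ref{1level}. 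The hard part will be the second and fourth steps: evaluating the non-square sums $\langle\chi_D(F)\rangle$ for $\deg F$ as large as $\approx4g$ precisely enough to pin down the exact coefficients of the dual-square secondary terms, and calibrating the ranges in $n$, $K$ and $K'$ so that exactly the advertised finite list of secondary main terms survives while everything else is provably of smaller order; the fourfold bookkeeping behind $c_1,\dots,c_4$ and the change of regime at $N=g$ are delicate, though each individual estimate is routine once the Poisson framework of Theorem \ref{1level} is available.
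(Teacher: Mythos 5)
Your proposal is correct and follows essentially the same route as the paper: the explicit-formula reduction to $\langle\chi_D(f_1f_2)\rangle$, the split into the diagonal (giving the $\Lambda^2$ term and $c_1$), the even-exponent off-diagonal squares (giving $\frac{1}{2g^2}\sum_{n\le N/2}\widehat{\Phi}(\frac ng)$ and $c_2,c_3,c_4$), and Poisson summation on the prime pairs $F=PQ$ with the square dual variable producing the $q^{-2kn}$ sums and the boundary terms at $n=g/(k+1)$ that yield the indicator $\mathds{1}_{(k+1)\mid g}$, with the Weil bound and the truncation at $K'$ handling the rest. The paper's proof is exactly this plan carried out, with the minor organizational difference that it separates the mixed-parity exponent cases $B_2^{\mathrm{eo}},B_2^{\mathrm{oe}}$ and bounds them by an iterated P\'olya--Vinogradov argument rather than folding them into the general Poisson treatment.
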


\begin{remark}
\emph{For example, when $\frac{g}{2} \leq N \leq g-1$ we obtain
\begin{align*}
&\Big\langle \Sigma_2(\Phi,g,D)\Big\rangle_{\mathcal{H}_{2g+1}}=\widehat{\Phi}(0)+ \frac{1}{2g^2} \sum_{n \leq N} \widehat{\Phi} \Big( \frac{n}{2g} \Big) n\sum_{d|n}\frac {\alpha(d)}{d}q^{\frac nd-n}\\
&\qquad+\frac{1}{2g^2} \sum_{n \leq \frac N2} \widehat{\Phi} \Big( \frac{n}{g} \Big)+c_1(\Phi,g)+c_2(\Phi,g)+c_3(\Phi,g)+c_4(\Phi,g)\\
&\qquad\qquad +\frac{\mathds{1}_{2|g}}{g^2(q-1)} \widehat{\Phi} \Big( \frac{1}{4} \Big) q^{-g}- \frac{1}{g^2} \sum_{\frac{g+1}{2} \leq n \leq N} \widehat{\Phi} \Big( \frac{n}{2g} \Big) q^{-2n}+ O\big(q^{-\frac{5g}{4}-1}/g^2\big) +O\big(q^{N-2g-1} \big).
\end{align*}}

\emph{With $\frac{g}{6}\leq N\leq \frac{g-1}{5}$ we get
\begin{align*}
&\Big\langle \Sigma_2(\Phi,g,D)\Big\rangle_{\mathcal{H}_{2g+1}}=\widehat{\Phi}(0)+ \frac{1}{2g^2} \sum_{n \leq N} \widehat{\Phi} \Big( \frac{n}{2g} \Big) n\sum_{d|n}\frac {\alpha(d)}{d}q^{\frac nd-n}\\
&\qquad+\frac{1}{2g^2} \sum_{n \leq \frac N2} \widehat{\Phi} \Big( \frac{n}{g} \Big)+c_1(\Phi,g)+c_2(\Phi,g)+c_3(\Phi,g)+c_4(\Phi,g)\\
&\qquad\qquad+\frac{\mathds{1}_{6|g}3}{g^2(q-1)}  \widehat{\Phi} \Big( \frac{1}{12} \Big)q^{-\frac{5g}{3}}+\frac{\mathds{1}_{7|g}7}{2g^2(q-1)}  \widehat{\Phi} \Big( \frac{1}{14} \Big)q^{-\frac{12g}{7}}\\
&\qquad\qquad\qquad- \frac{3}{g^2}  \sum_{\frac {g+1}{6}\leq n\leq N} \widehat{\Phi} \Big( \frac{n}{2g} \Big)q^{-10n}- \frac{7}{2g^2} \sum_{\frac {g+1}{7}\leq n\leq N} \widehat{\Phi} \Big( \frac{n}{2g} \Big)q^{-12n}+O\big(q^{-\frac{7g}{4}-1}/g^2\big).
\end{align*}}
\emph{In general, when $\frac{g}{K+1} \leq N \leq \frac{g-1}{K}$, we see $ \lceil \frac{K+1}{3} \rceil$ secondary main terms.}
\end{remark}

\begin{remark}\emph{We note that the first secondary term we compute appears in the range $\frac{g}{2} \leq N \leq g-1$, and is of size $q^{-g}/g^2$, which in the number field setting translates to $X^{-\frac{1}{2}}/(\log X)^2$. The moment conjecture \cite{cfkrs} predicts that the $k^{\text{th}}$ moment of quadratic Dirichlet $L$-functions should be asymptotic to $P_k(\log X)$, where $P_k$ is a polynomial of degree $\frac{k(k+1)}{2}$, with an error term of size $O_\varepsilon(X^{-\frac{1}{2}+\varepsilon})$. Soundararajan \cite{sound} obtained an asymptotic formula for the second moment with an error of size $O_\varepsilon(X^{-\frac{1}{6}+\varepsilon})$, but it is believed that in this case, there should be a genuine term of size $X^{-\frac{1}{2}+o(1)}$ appearing. Note that the second moment in the hyperelliptic ensemble was computed in \cite{aflorea3} with an error of size $O_\varepsilon(q^{-g+\varepsilon g})$, which is the equivalent of $O_\varepsilon(X^{-\frac{1}{2}+\varepsilon})$ in the number field setting. Our pair correlation result shows the existence of a term of size compatible with the discussion above. This should be compared to the $1$-level density computation, where the term of size $q^{-4g/3}/g$ that we find mirrors the secondary term in the first moment asymptotic.}

\emph{For the cubic moment of this family, there are conjectures of Diaconu, Goldfeld and Hoffstein \cite{dgh}, predicting a $X^{-\frac{1}{4}}$ term in the asymptotic formula. It would be interesting to see whether this term would be in any way reflected in the $3$-level correlation in the hyperelliptic ensemble. We will return to this question in another paper.}
\end{remark}

Theorem \ref{paircorth} allows us to get the following result on the proposition of simple zeros. 
\begin{corollary} \label{sz}
We have
\[
\Bigg\langle \frac{1}{2g}\sum_{\substack{1\leq j\leq 2g\\ \theta_{j,D}\ \emph{simple}}}1\Bigg\rangle_{\mathcal{H}_{2g+1}}\geq \frac{3}{2}-\frac{\cot(\frac{1}{\sqrt{2}})}{\sqrt{2}}+o(1)=0.6725\ldots+o(1)
\]
as $g\rightarrow\infty$.
\end{corollary}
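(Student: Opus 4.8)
The plan is to run a Montgomery-type argument: first turn the pair correlation asymptotic of Theorem \ref{paircorth} into a pointwise lower bound for the number of simple zeros of each $L(s,\chi_D)$, then average and optimize over the admissible test functions. Fix $D$ and group the $2g$ angles $\theta_{j,D}$, listed with multiplicity, according to their distinct values $\vartheta_1,\dots,\vartheta_m$, with multiplicities $n_1,\dots,n_m$ so that $\sum_i n_i=2g$. For any $\Phi\ge0$ the diagonal blocks alone give
\[
\Sigma_2(\Phi,g,D)=\frac{1}{2g}\sum_{i,i'}n_i n_{i'}\Phi\big(2g(\vartheta_i-\vartheta_{i'})\big)\ge\frac{\Phi(0)}{2g}\sum_i n_i^2,
\]
since the off-diagonal terms are nonnegative. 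Writing $S(D)$ for the number of simple zeros, the elementary inequality $\sum_i n_i(2-n_i)\le S(D)$ (a simple value contributes $1$, a value of multiplicity $\ge2$ contributes $\le0$) rearranges to $S(D)\ge 4g-\sum_i n_i^2$. Combining the two displays yields the pointwise bound $\frac{S(D)}{2g}\ge 2-\frac{\Sigma_2(\Phi,g,D)}{\Phi(0)}$, valid for every $\Phi\ge0$.

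Next I would average over $\mathcal{H}_{2g+1}$ and insert Theorem \ref{paircorth} in the largest admissible range, taking $N\to 2g^-$ with $2g-N\to\infty$ (for instance $N=2g-\lfloor\sqrt g\rfloor$), so that the support parameter $\beta=N/(2g)\to1$ while all error terms $O(q^{N-2g-1})$, $O(q^{N-2g}/g^2)$ are $o(1)$. The terms $c_1,\dots,c_4$, the sum $\frac{1}{2g^2}\sum_{n\le N/2}\widehat\Phi(n/g)$, and the exponentially small secondary main terms are all $o(1)$; in the surviving sum the $d=1$ divisor dominates, $\sum_{d\mid n}\frac{\alpha(d)}{d}q^{n/d-n}=1+O(q^{-n/2})$, and a Riemann-sum approximation gives $\frac{1}{2g^2}\sum_{n\le N}\widehat\Phi(\frac{n}{2g})n\,(1+O(q^{-n/2}))=2\int_0^1 u\widehat\Phi(u)\,du+o(1)$. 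Hence, together with $\Phi(0)=2\int_0^1\widehat\Phi(u)\,du$,
\[
\Big\langle\tfrac{S(D)}{2g}\Big\rangle_{\mathcal{H}_{2g+1}}\ge 2-\frac{\widehat\Phi(0)+2\int_0^1 u\widehat\Phi(u)\,du}{2\int_0^1\widehat\Phi(u)\,du}+o(1).
\]

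The final step is the variational problem of minimizing the quotient $R$ on the right over real even test functions with $\Phi\ge0$ and $\widehat\Phi$ supported in $(-1,1)$. I would enforce $\Phi\ge0$ by writing $\Phi=f^2$ with $\widehat f$ even and supported in $(-\tfrac12,\tfrac12)$, so that $\widehat\Phi(0)=\int\widehat f^2$, $\Phi(0)=(\int\widehat f)^2$, and $2\int_0^1 u\widehat\Phi(u)\,du=\int\!\!\int|v+w|\widehat f(v)\widehat f(w)\,dv\,dw$. Setting the first variation of $R$ to zero gives the integral equation $\widehat f(v)+\int_{-1/2}^{1/2}|v+w|\widehat f(w)\,dw=\mathrm{const}$; differentiating twice yields $\widehat f''+2\widehat f=0$, whose even solution is $\widehat f(v)=\cos(\sqrt2\,v)$. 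Evaluating the three integrals for this choice produces $R=\frac12+\frac{1}{\sqrt2}\cot(\frac{1}{\sqrt2})$, and letting $\beta\to1$ gives exactly the claimed bound $\frac32-\frac{\cot(1/\sqrt2)}{\sqrt2}+o(1)$.

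The main obstacle is not the variational computation but the approximation bookkeeping around it. The admissible $\Phi$ are trigonometric polynomials whose $\widehat\Phi$ is supported on the lattice $\{n/2g\}$ and constrained by $N<2g$, so the continuous optimizer $f^2$ must be realized within this class while preserving both the nonnegativity $\Phi\ge0$ and the support bound, and one must verify that the Riemann-sum approximations and the error terms of Theorem \ref{paircorth} remain $o(1)$ uniformly along the approximating sequence as $\beta\to1$. Managing this discretization-and-truncation step, rather than any single estimate, is where the care is required.
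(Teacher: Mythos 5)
Your proposal is correct and is essentially the paper's own argument: the same Montgomery-type combinatorial step (keep only the diagonal blocks of the nonnegative test function and use $\sum_i n_i(2-n_i)\le S(D)$, i.e.\ $m^2\ge 2m-\mathds{1}_{m=1}$), followed by insertion of Theorem \ref{paircorth} and the same variational problem, whose optimizer $\widehat{f}(v)=\cos(\sqrt{2}\,v)$ and value $\tfrac12+\tfrac{1}{\sqrt{2}}\cot(\tfrac{1}{\sqrt{2}})$ you re-derive while the paper simply quotes Montgomery's solution. The only organizational difference is that the paper fixes an even $\Phi\geq 0$ with $\mathrm{supp}\,\widehat{\Phi}\subset(-1,1)$ and lets $g\to\infty$, recovering the full constant by letting the support parameter tend to $1$ only at the level of the variational problem (a scaling argument), which avoids the uniformity-in-$\beta$ and discretization bookkeeping you correctly flag as the delicate point when coupling $N=2g-\lfloor\sqrt{g}\rfloor$ to $g$.
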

Note that the proportion of simple zeros is the same as that obtained by Montgomery \cite{montgomery} for the Riemann-zeta function. This should not come as a surprise, since by the work of Rudnick and Sarnak \cite{rudnicksarnak}, \cite{rudnicksarnak2}, the $n$-level correlations of $\zeta(s)$ and $L(s,\pi)$ (with $L(s,\pi)$ primitive automorphic function) are universal $(n\geq2)$, and given by the correlations of the eigenvalues of matrices in the GUE.

\section{Preliminaries} \label{preliminaries}
We first give some background information on $L$-functions over function fields and their connection to zeta functions of curves.

Let $\mathcal{M}$ be the set of monic polynomials in $\mathbb{F}_q[x]$, $\mathcal{M}_n$ and $\mathcal{M}_{\leq n}$ be the sets of those of degree $n$ and degree at most $n$, respectively. Let $\pi_q(n)$ denote the number of monic, irreducible polynomials of degree $n$ over $\mathbb{F}_q[x]$. Then by the Corollary to Proposition $2.1$ in \cite{rosen}, we have the following Prime Polynomial Theorem
\begin{equation}
\pi_q(n) = \frac{1}{n} \sum_{d|n} \mu(d) q^{\frac nd}.
\label{pnt}
\end{equation}
 For $f \in \mathbb{F}_q[x]$, let
$$ \Lambda(f) = 
\begin{cases}
d(P) & \mbox{ if } f=cP^k \text{ for some }c \in \mathbb{F}_q^{\times}\ \text{and}\ k\geq 1, \\
0 & \mbox{ otherwise. }
\end{cases}
$$
We can rewrite the Prime Polynomial Theorem in the form 
\begin{equation*}
\sum_{f \in \mathcal{M}_n} \Lambda(f) = q^n.
\end{equation*}

\subsection{Quadratic Dirichlet $L$-functions over function fields}

For $\textrm{Re}(s)>1$, the zeta function of $\mathbb{F}_q[x]$ is defined by
\[
\zeta_q(s):=\sum_{f\in\mathcal{M}}\frac{1}{|f|^s}=\prod_{P\in \mathcal{P}}\bigg(1-\frac{1}{|P|^s}\bigg)^{-1}.
\]
Since there are $q^n$ monic polynomials of degree $n$, we see that
\[
\zeta_q(s)=\big(1-q^{1-s}\big)^{-1}.
\]
It is sometimes convenient to make the change of variable $u=q^{-s}$, and then write $\mathcal{Z}(u)=\zeta_q(s)$, so that $\mathcal{Z}(u)=(1-qu)^{-1}$.

For $P$ a monic irreducible polynomial, the quadratic residue symbol $\big(\frac{f}{P}\big)\in\{0,\pm1\}$ is defined by
\[
\Big(\frac{f}{P}\Big)\equiv f^{\frac{|P|-1}{2}}(\textrm{mod}\ P).
\]
If $Q=P_{1}^{\alpha_1}P_{2}^{\alpha_2}\ldots P_{r}^{\alpha_r}$, then the Jacobi symbol is defined by
\[
\Big(\frac{f}{Q}\Big)=\prod_{j=1}^{r}\Big(\frac{f}{P_j}\Big)^{\alpha_j}.
\]
The Jacobi symbol satisfies the quadratic reciprocity law. That is to say if $A,B\in \mathbb{F}_q[x]$ are relatively prime, monic polynomials, then
\[
\Big(\frac{A}{B}\Big)=(-1)^{\frac{(q-1)d(A)d(B)}{2}}\Big(\frac{B}{A}\Big).
\]

For $D$ monic, we define the character 
\[
\chi_D(g)=\Big(\frac{D}{g}\Big),
\]
and consider the $L$-function attached to $\chi_D$,
\[
L(s,\chi_D):=\sum_{f\in\mathcal{M}}\frac{\chi_D(f)}{|f|^s}.
\]
With the change of variable $u=q^{-s}$ we have
\begin{equation}\label{2}
\mathcal{L}(u,\chi_D):=L(s,\chi_D)=\sum_{f\in\mathcal{M}}\chi_D(f)u^{d(f)}=\prod_{P\in \mathcal{P}}\big(1-\chi_D(P)u^{d(P)}\big)^{-1}.
\end{equation}
For $D\in\mathcal{H}_{2g+1}$, $\mathcal{L}(u,\chi_D)$ is a polynomial in $u$ of degree $2g$ and it satisfies a functional equation
\begin{equation*}
\mathcal{L}(u,\chi_D)=(qu^2)^g\mathcal{L}\Big(\frac{1}{qu},\chi_D\Big).
\end{equation*}

There is a connection between $\mathcal{L}(u,\chi_D)$ and zeta function of curves. For $D\in\mathcal{H}_{2g+1}$, the affine equation $y^2=D(x)$ defines a projective and connected hyperelliptic curve $C_D$ of genus $g$ over $\mathbb{F}_q$. The zeta function of the curve $C_D$ is defined by
\[
Z_{C_D}(u)=\exp\bigg(\sum_{j=1}^{\infty}N_j(C_D)\frac{u^j}{j}\bigg),
\]
where $N_j(C_D)$ is the number of points on $C_D$ over $\mathbb{F}_q$, including the point at infinity. Weil \cite{weil} showed that
\[
Z_{C_D}(u)=\frac{P_{C_D}(u)}{(1-u)(1-qu)},
\]
where $P_{C_D}(u)$ is a polynomial of degree $2g$. It is known that $P_{C_D}(u)=\mathcal{L}(u,\chi_D)$ (this was proved in Artin's thesis). The Riemann Hypothesis for curves over function fields was proven by Weil \cite{weil}, so all the zeros of $\mathcal{L}(u,\chi_D)$ are on the circle $|u|=q^{-\frac12}$. We express $\mathcal{L}(u,\chi_D)$ in terms of its zeros as
\begin{equation}\label{3}
\mathcal{L}(u,\chi_D)=\prod_{j=1}^{2g}\big(1-uq^{\frac12}e^{-2\pi i\theta_{j,D}}).
\end{equation}

\subsection{Preliminary lemmas}

\begin{lemma}\label{5}
For $f\in\mathcal{M}$ we have
\[
\sum_{D\in\mathcal{H}_{2g+1}}\chi_D(f)=(-1)^{\frac{(q-1)d(f)}{2}}\bigg(\sum_{C|f^\infty}\sum_{h\in\mathcal{M}_{2g+1-2d(C)}}\chi_f(h)-q\sum_{C|f^\infty}\sum_{h\in\mathcal{M}_{2g-1-2d(C)}}\chi_f(h)\bigg),
\]
where the first summations are over monic polynomials $C$ whose prime factors are among the prime factors of $f$.
\end{lemma}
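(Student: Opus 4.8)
The plan is to reduce the character sum to its ``dual'' via quadratic reciprocity, and then to read off the claimed expression as the coefficient of $u^{2g+1}$ in an explicit generating function (a Dirichlet series in the variable $u=q^{-s}$).

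\emph{Reciprocity step.} First I would apply the reciprocity law to each summand. Since every $D\in\mathcal{H}_{2g+1}$ is monic of odd degree $d(D)=2g+1$, whenever $\gcd(D,f)=1$ we have
\[
\chi_D(f)=\Big(\frac{D}{f}\Big)=(-1)^{\frac{(q-1)(2g+1)d(f)}{2}}\Big(\frac{f}{D}\Big)=(-1)^{\frac{(q-1)d(f)}{2}}\chi_f(D),
\]
the last equality because $2g+1$ is odd, so raising $(-1)^{\frac{(q-1)d(f)}{2}}$ to the odd power $2g+1$ leaves it unchanged. When $\gcd(D,f)\neq1$ both $\chi_D(f)$ and $\chi_f(D)$ vanish, so the identity $\chi_D(f)=(-1)^{\frac{(q-1)d(f)}{2}}\chi_f(D)$ holds for every $D\in\mathcal{H}_{2g+1}$. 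Summing pulls the constant sign out and reduces the lemma to evaluating $\sum_{D\in\mathcal{H}_{2g+1}}\chi_f(D)$.

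\emph{Generating function.} For this I would pass to $\sum_{D}\chi_f(D)u^{d(D)}$, summed over monic square-free $D$. Using the Euler product together with $1+x=(1-x^2)/(1-x)$,
\[
\sum_{\substack{D\in\mathcal{M}\\ D\text{ square-free}}}\chi_f(D)u^{d(D)}=\prod_{P}\big(1+\chi_f(P)u^{d(P)}\big)=\prod_P\frac{1-\chi_f(P)^2u^{2d(P)}}{1-\chi_f(P)u^{d(P)}}.
\]
Since $\chi_f(P)^2=1$ for $P\nmid f$ and $\chi_f(P)^2=0$ for $P\mid f$, the numerator equals $\prod_{P\nmid f}(1-u^{2d(P)})$; invoking $\mathcal{Z}(u)^{-1}=\prod_P(1-u^{d(P)})=1-qu$ (with $u\mapsto u^2$) and the Euler product for $\mathcal{L}(u,\chi_f)=\sum_{h\in\mathcal{M}}\chi_f(h)u^{d(h)}$ gives
\[
\sum_{\substack{D\in\mathcal{M}\\ D\text{ square-free}}}\chi_f(D)u^{d(D)}=(1-qu^2)\bigg(\sum_{C\mid f^\infty}u^{2d(C)}\bigg)\bigg(\sum_{h\in\mathcal{M}}\chi_f(h)u^{d(h)}\bigg),
\]
where I have expanded $\prod_{P\mid f}(1-u^{2d(P)})^{-1}$ as the sum over monic $C$ supported on the primes dividing $f$.

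\emph{Extraction.} Finally I would compare coefficients of $u^{2g+1}$. The left-hand side yields $\sum_{D\in\mathcal{H}_{2g+1}}\chi_f(D)$, while on the right the factor $1$ contributes $\sum_{C\mid f^\infty}\sum_{h\in\mathcal{M}_{2g+1-2d(C)}}\chi_f(h)$ and the factor $-qu^2$ contributes $-q\sum_{C\mid f^\infty}\sum_{h\in\mathcal{M}_{2g-1-2d(C)}}\chi_f(h)$, exactly the bracketed expression in the statement; combining with the reciprocity sign finishes the proof. I expect no serious obstacle here: the only points needing care are the bookkeeping of the reciprocity sign (settled by the oddness of $2g+1$, and by checking that both sides vanish when $\gcd(D,f)>1$) and the justification of the Euler product manipulations, which are valid as identities of formal power series in $u$ (equivalently for $|u|$ sufficiently small).
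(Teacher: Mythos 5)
Your proof is correct, and it is essentially the argument that lies behind the paper's own treatment: the paper does not prove the lemma from scratch but cites Lemma 2.2 of Florea's paper (proved there for $q\equiv 1\ (\mathrm{mod}\ 4)$, where the reciprocity sign is trivial) and remarks that the factor $(-1)^{\frac{(q-1)d(f)}{2}}$ from quadratic reciprocity covers general odd $q$ --- exactly the point you verify via the oddness of $2g+1$ and the vanishing of both symbols when $\gcd(D,f)\neq1$. The only cosmetic difference from the cited proof is that it sieves square-freeness by M\"obius, writing the sum over $D\in\mathcal{H}_{2g+1}$ as $\sum_{(A,f)=1}\mu(A)\sum_{B}\chi_f(B)$ and then recognizing $\sum_{(A,f)=1}\mu(A)u^{2d(A)}=(1-qu^2)\sum_{C|f^\infty}u^{2d(C)}$, whereas you carry out the same sieve factor by factor through $1+x=(1-x^2)/(1-x)$ in the Euler product; the extraction of the coefficient of $u^{2g+1}$ is then identical.
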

\begin{proof}
See \cite{aflorea}; Lemma $2.2$. Note that in \cite{aflorea}, $q$ was taken to be a prime with $q \equiv 1 (\textrm{mod}\ 4)$. Here, $q$ is not necessarily $\equiv 1 (\textrm{mod}\ 4)$, which is accounted for by the extra factor $(-1)^{\frac{(q-1)d(f)}{2}}$ (coming from the quadratic reciprocity formula).
\end{proof}

We define the generalized Gauss sum as
\[
G(V,\chi):= \Big(   (-1)^{\frac{(q-1)d(f)}{2}}\frac{1+i}{2}+\frac{1-i}{2}  \Big) \sum_{u (\textrm{mod}\ f)}\chi(u)e\Big(\frac{uV}{f}\Big),
\]
where the exponential was defined by Hayes \cite{hayes} as follows. For $a \in  \mathbb{F}_q\big((\frac 1x)\big) $, 
$$ e(a) = e^{ \frac{2 \pi i \text{Tr}_{\mathbb{F}_q / \mathbb{F}_p} (a_1)}{p}},$$ where $a_1$ is the coefficient of $\frac 1x$ in the Laurent expansion of $a$. Note that when $q \equiv 1 \pmod 4$, the above definition of the Gauss sum coincides with the definition in \cite{aflorea}.

Let $\chi_q$ denote the quadratic character modulo $q$ and $ \tau(q) = \sum_{a  (\textrm{mod}\ q)} \chi_q(a) e(a).$ By the Hasse-Davenport relations, $\tau(q) = (-1)^{k-1} \tau(p)^k$, and $\tau(p) = \sqrt{p}$ if $p \equiv 1 (\textrm{mod}\ 4)$, and $\tau(p) = i \sqrt{p}$ if $p \equiv 3 (\textrm{mod}\ 4)$. Let 
\begin{equation}
 \epsilon(q) = 
\begin{cases}
\frac{\tau(q)}{\sqrt{q}} & \mbox{ if } q \equiv 1(\textrm{mod}\ 4), \\
-\frac{ i \tau(q)}{\sqrt{q}} & \mbox{ if } q \equiv 3(\textrm{mod}\ 4).
\end{cases}
\label{epsilonq}
\end{equation}
Note that $\epsilon(q) = \pm 1$.

The following lemma is a slight modification of Lemma $3.2$ in \cite{aflorea}. Since the proof is very similar to the proof in \cite{aflorea}, we will only sketch it.
\begin{lemma}

\begin{enumerate}
\item If $(f,h)=1$, then $G(V, \chi_{fh})= G(V, \chi_f) G(V,\chi_h)$.
\item Write $V= V_1 P^{\alpha}$ where $P \nmid V_1$.
Then 
 $$G(V , \chi_{P^j})= 
\begin{cases}
0 & \mbox{if } j \leq \alpha \text{ and } j \text{ odd,} \\
\varphi(P^j) & \mbox{if }  j \leq \alpha \text{ and } j \text{ even,} \\
-|P|^{j-1} & \mbox{if }  j= \alpha+1 \text{ and } j \text{ even,} \\
\chi_P(V_1) |P|^{j-\frac12} & \mbox{if } j = \alpha+1 \text{ and } j \text{ odd, } d(P) \text{ even}, \\
\epsilon(q) \chi_P(V_1) |P|^{j-\frac12} & \mbox{if } j = \alpha+1 \text{ and }j \text{ odd, } d(P) \text{ odd}, \\
0 & \mbox{if } j \geq 2+ \alpha .
\end{cases}$$ 
\end{enumerate} \label{computeg}
\end{lemma}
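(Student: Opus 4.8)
The plan is to follow the proof of Lemma $3.2$ in \cite{aflorea}, the only genuinely new feature being the scalar factor $(-1)^{\frac{(q-1)d(f)}{2}}\frac{1+i}{2}+\frac{1-i}{2}$ built into the definition of $G(V,\chi)$, which is forced on us because we no longer assume $q\equiv1\pmod4$. For part (1) I would split the residues $u\pmod{fh}$ by the Chinese Remainder Theorem, writing $u$ in terms of $u_1\pmod f$ and $u_2\pmod h$ so that the Hayes exponential separates, $e\big(uV/(fh)\big)=e(u_1V\bar h/f)\,e(u_2V\bar f/h)$, and the symbol factors as $\chi_{fh}(u)=\chi_f(u)\chi_h(u)$. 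After the standard change of variables this reconstitutes $G(V,\chi_f)G(V,\chi_h)$ up to the prefactors; the remaining point is to check that the three scalars attached to $d(f)$, $d(h)$, $d(fh)$ are consistent. Since the prefactor is not literally multiplicative when $q\equiv3\pmod4$, this is where one uses $d(fh)=d(f)+d(h)$ together with the reciprocity sign produced when the two moduli are interchanged; the prefactor was designed precisely so that these two discrepancies cancel.

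For part (2), having reduced to a prime-power modulus by part (1), I would substitute $V=V_1P^{\alpha}$ with $P\nmid V_1$, so that $e(uV/P^j)=e(uV_1/P^{j-\alpha})$ depends on $u$ only modulo $P^{j-\alpha}$, while $\chi_{P^j}(u)=\big(\tfrac{P}{u}\big)^{j}$ is trivial on units when $j$ is even and is the quadratic character $\chi_P$ modulo $P$ (up to reciprocity signs) when $j$ is odd. I would then separate by the parity of $j$. When $j$ is even, $G$ becomes the prefactor (which is now $1$) times a function-field Ramanujan sum $\sum_{(u,P)=1}e(uV/P^j)$, which evaluates by inclusion-exclusion to $\varphi(P^j)$ when $P^j\mid V$ (that is, $j\leq\alpha$), to $-|P|^{j-1}$ when $P^{\alpha}\,\|\,V$ with $j=\alpha+1$, and to $0$ when $j\geq\alpha+2$. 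When $j$ is odd the sum vanishes for $j\leq\alpha$ (the additive character is trivial and $\sum_{u\bmod P}\chi_P(u)=0$) and for $j\geq\alpha+2$ (again Ramanujan vanishing), so the only surviving case is $j=\alpha+1$. There, after the change of variable $u\mapsto V_1^{-1}u$, the sum collapses to $|P|^{\alpha}\,\chi_P(V_1)$ times the quadratic Gauss sum $G_P=\sum_{w\bmod P}\chi_P(w)e(w/P)$, which already delivers the claimed magnitude $\chi_P(V_1)|P|^{\,j-\frac12}$ up to a unimodular constant.

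The crux, and the only place where $q\not\equiv1\pmod4$ really intervenes, is pinning down that unimodular constant, i.e.\ evaluating $G_P$ exactly and verifying that the prefactor renormalizes it to $1$ when $d(P)$ is even and to $\epsilon(q)$ when $d(P)$ is odd. I would identify $\mathbb{F}_q[x]/(P)\cong\mathbb{F}_{q^{d(P)}}$, so that $G_P$ is the quadratic Gauss sum over $\mathbb{F}_{q^{d(P)}}$, with the quadratic character being the norm-lift of $\chi_q$ and $e(\cdot/P)$ the trace-lift of the standard additive character; the Hasse--Davenport relation then expresses $G_P$ through $\tau(q)$. Combining this with $\tau(q)=(-1)^{k-1}\tau(p)^k$, the values $\tau(p)=\sqrt p$ or $i\sqrt p$, and the definition \eqref{epsilonq} of $\epsilon(q)$, the phase of $G_P$ is determined, and the prefactor $(-1)^{\frac{(q-1)d(P^j)}{2}}\frac{1+i}{2}+\frac{1-i}{2}$ (which is $1$ or $-i$ according to the parity of $\frac{(q-1)d(P^j)}{2}$, and since $j$ is odd tracks the parity of $d(P)$) is exactly what cancels the stray factors of $i$, leaving the real values $1$ and $\epsilon(q)=\pm1$. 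I expect this sign bookkeeping, keeping the reciprocity signs and the Gauss-sum phase consistent simultaneously for both residue classes of $q$ modulo $4$, to be the main obstacle; the magnitudes and the vanishing cases are routine.
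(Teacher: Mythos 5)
Your proposal is correct and follows essentially the same route as the paper: the paper's proof simply defers to Lemma $3.2$ of \cite{aflorea} and records precisely the two new ingredients you single out, namely the identity
$\big((-1)^{\frac{(q-1)d(fh)}{2}}\frac{1+i}{2}+\frac{1-i}{2}\big)\big(\frac{f}{h}\big)\big(\frac{h}{f}\big)=\big((-1)^{\frac{(q-1)d(f)}{2}}\frac{1+i}{2}+\frac{1-i}{2}\big)\big((-1)^{\frac{(q-1)d(h)}{2}}\frac{1+i}{2}+\frac{1-i}{2}\big)$
(your ``prefactor consistency'' via reciprocity) for part (1), and the case $j=\alpha+1$ with $j$ odd, $d(P)$ odd for part (2), where the Gauss-sum phase $\tau(q)$ is renormalized to $\epsilon(q)$ exactly as you describe. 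The only difference is one of presentation: you reconstruct the CRT/Ramanujan-sum/Hasse--Davenport details that the paper imports wholesale from \cite{aflorea}.
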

\begin{proof}
The first part follows exactly as in the proof of Lemma $3.2$ in \cite{aflorea}, upon noticing that
\begin{align*}
\Big( (-1)^{\frac{(q-1)d(fh)}{2}} \frac{1+i}{2} + \frac{1-i}{2}  \Big) \Big(  \frac{f}{h} \Big)  \Big(  \frac{h}{f} \Big)  =&\Big( (-1)^{\frac{(q-1)d(f)}{2}}\frac{1+i}{2}+ \frac{1-i}{2}  \Big) \\
& \qquad\qquad \Big( (-1)^{\frac{(q-1)d(h)}{2}}\frac{1+i}{2}+ \frac{1-i}{2}  \Big).
\end{align*}
(The above follows easily by using the quadratic reciprocity.)

For the second part of the lemma, everything is the same as in the proof in \cite{aflorea}, except for the case $j=\alpha+1$. We now assume that $j=\alpha+1$. If $j$ is even or $d(P)$ is even, then $ (-1)^{\frac{(q-1)d(P^j)}{2} }=1$, and again everything is the same as in Lemma $3.2$ in \cite{aflorea}. If $j$ is odd and $d(P)$ is odd, then exactly the same argument shows that if $q \equiv 1(\textrm{mod}\ 4)$, then 
$$G(V,\chi_{P^j}) =\frac{\tau(q)}{\sqrt{q}} \chi_P(V_1) |P|^{j-\frac{1}{2}} ,$$ and if $q \equiv 3(\textrm{mod}\ 4)$, then
$$G(V,\chi_{P^j}) = - \frac{i \tau(q)}{\sqrt{q}}\chi_P(V_1) |P|^{j-\frac{1}{2}}.$$
\end{proof}
The next lemma concerns the Poisson summation formula for Dirichlet characters. Again, the lemma here is a minor modification of Proposition $3.1$ in \cite{aflorea}, and we will only indicate the necessary changes in the proof of Proposition $3.1$.

\begin{lemma}\label{Poisson}
Let $f\in\mathcal{M}_n$. If $n$ is even then
\[
\sum_{h\in\mathcal{M}_m}\chi_f(h)=\frac{q^m}{|f|}\bigg(G(0,\chi_f)+(q-1)\sum_{V\in\mathcal{M}_{\leq n-m-2}}G(V,\chi_f)-\sum_{V\in\mathcal{M}_{n-m-1}}G(V,\chi_f)\bigg),
\]
otherwise
\[
\sum_{h\in\mathcal{M}_m}\chi_f(h)=  \epsilon(q) \frac{q^{m+\frac12}} {|f|}\sum_{V\in\mathcal{M}_{n-m-1}}G(V,\chi_f).
\]
\end{lemma}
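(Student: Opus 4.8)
The plan is to prove Lemma \ref{Poisson} by adapting the proof of Proposition $3.1$ in \cite{aflorea}, indicating only the modifications needed to handle the case $q\not\equiv 1\ (\mathrm{mod}\ 4)$. The starting point is the finite Fourier expansion of the character $\chi_f$ over residues modulo $f$. For $h\in\mathcal{M}_m$, one writes $\chi_f(h)$ in terms of the additive characters $e(uh/f)$ via the Gauss sum $G(V,\chi_f)$, then sums over all $h\in\mathcal{M}_m$ and evaluates the resulting exponential sum $\sum_{h\in\mathcal{M}_m}e(hV/f)$ using the orthogonality built into Hayes' exponential $e(\cdot)$. This orthogonality, which detects whether the degree of $V/f$ forces the coefficient of $1/x$ in the Laurent expansion to vanish, is precisely what produces the ranges $V\in\mathcal{M}_{\leq n-m-2}$, $V\in\mathcal{M}_{n-m-1}$, and the constant term $G(0,\chi_f)$, together with the weights $q^m/|f|$ and $(q-1)$.

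First I would recall that the entire structure of the argument — the Fourier inversion, the evaluation of the geometric-type sum over $h$, and the splitting into the three $V$-ranges — is identical to \cite{aflorea}, since none of these steps depends on the congruence class of $q$ modulo $4$. The only place where $q\bmod 4$ enters is through the definition of the generalized Gauss sum $G(V,\chi_f)$ and, when $n$ is odd, through the sign/phase factor that multiplies the whole expression. In the even case $n$ is even, the functional equation and the reciprocity-induced factor $(-1)^{(q-1)d(f)/2}$ are already absorbed into our definition of $G(V,\chi_f)$ (compare the prefactor $\big((-1)^{(q-1)d(f)/2}\tfrac{1+i}{2}+\tfrac{1-i}{2}\big)$), so the formula reads exactly as in \cite{aflorea} with our modified $G$, and there is nothing further to check.

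The substantive point is the odd case. When $n=d(f)$ is odd, the normalization of the Gauss sum carries an extra factor governed by $\epsilon(q)$, and I would verify that with the definition \eqref{epsilonq} the constant multiplying $\sum_{V\in\mathcal{M}_{n-m-1}}G(V,\chi_f)$ comes out to be exactly $\epsilon(q)\,q^{m+1/2}/|f|$. Concretely, the factor $\tau(q)/\sqrt q$ or $-i\tau(q)/\sqrt q$ that appears when one unfolds $\chi_f$ into additive characters — precisely the same factor recorded in the last two cases of Lemma \ref{computeg} — is what gets renamed $\epsilon(q)$; one then checks, using the Hasse--Davenport relation $\tau(q)=(-1)^{k-1}\tau(p)^k$ and the values $\tau(p)=\sqrt p$ or $i\sqrt p$, that $\epsilon(q)=\pm1$ and that the phase is consistent in both residue classes. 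This is entirely parallel to the computation already carried out in the proof of Lemma \ref{computeg}.

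The main obstacle, such as it is, lies not in any hard estimate but in correctly tracking the interaction of two sources of sign: the reciprocity factor $(-1)^{(q-1)d(f)/2}$ folded into the definition of $G$, and the Gauss-sum normalization $\epsilon(q)$ arising from $\tau(q)$. One must confirm these do not double-count and that, after absorbing reciprocity into $G(V,\chi_f)$, the remaining global constant is exactly $\epsilon(q)q^{m+1/2}/|f|$ in the odd case and $q^m/|f|$ in the even case. Since every individual step transcribes from \cite{aflorea} with $G$ replaced by our sign-adjusted Gauss sum, I would present the proof as a short verification: state that the derivation is identical up to the prefactor, then isolate and check the single odd-$n$ constant, citing the parallel computation in the proof of Lemma \ref{computeg} for the value of $\epsilon(q)$.
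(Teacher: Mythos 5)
Your proposal is correct and follows essentially the same route as the paper: both transcribe the proof of Proposition 3.1 in \cite{aflorea}, observe that the prefactor $\big((-1)^{(q-1)d(f)/2}\tfrac{1+i}{2}+\tfrac{1-i}{2}\big)$ is trivial when $d(f)$ is even, and in the odd case reduce everything to checking that the exponential weights on the degree-$(n-m-1)$ range of $V$ produce the factor $\overline{\tau(q)}$, which combined with that prefactor yields exactly $\epsilon(q)q^{1/2}$. The interaction of the reciprocity sign with the Gauss-sum normalization that you flag as the one substantive verification is precisely the content of the paper's two displayed equations.
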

\begin{proof}
We proceed exactly as in \cite{aflorea}. Taking into account the definition of $G(u,\chi_f)$, we have the following (which is the analog of $(3.11)$ in \cite{aflorea}):
\begin{align*}
&\Big( (-1)^{\frac{(q-1)d(f)}{2}} \frac{1+i}{2}+ \frac{1-i}{2}\Big) \sum_{ h  \in \mathcal{M}_m} \chi_f(h)   \\
&\quad = \frac{q^m}{|f|} \bigg( G(0,\chi_f) + \sum_{d(V) \leq n-m-2} G(V,\chi_f) e \Big( \frac{-Vx^m}{f} \Big)+ \sum_{d(V) =n-m-1} G(V,\chi_f) e \Big( \frac{-Vx^m}{f} \Big) \bigg).
\end{align*}
If $d(f)$ is even, then $(-1)^{\frac{(q-1)d(f)}{2}}=1$, and everything stays the same as in \cite{aflorea}. If $d(f)$ is odd, then (again, as in \cite{aflorea})
$$ \sum_{d(V) =n-m-1} G(V,\chi_f) e \Big( \frac{-Vx^m}{f} \Big)  = \overline{\tau(q)} \sum_{V \in \mathcal{M}_{n-m-1}} G(V,\chi_f).$$ Using the above two equations finishes the proof. 
\end{proof}
Combining the two lemmas above and using the fact that $\epsilon(q)^2=1$, we obtain the following summation formula for $f=P\in\mathcal{P}$.
\begin{lemma}
\label{poissonprime}
Let $P\in\mathcal{P}_n$. If $n$ is even then
$$ \sum_{h\in\mathcal{M}_m}\chi_P(h)= \frac{q^m}{|P|^{\frac{1}{2}} }   \Big((q-1)\sum_{V\in\mathcal{M}_{\leq n-m-2}} \chi_P(V)-\sum_{V\in\mathcal{M}_{n-m-1}} \chi_P(V)\bigg),$$
otherwise
$$ \sum_{h\in\mathcal{M}_m}\chi_P(h)=   \frac{q^{m+\frac12}} {|P|^{\frac{1}{2}} } \sum_{V\in\mathcal{M}_{n-m-1}} \chi_P(V).$$
\end{lemma}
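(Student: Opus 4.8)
The plan is to specialize the general Poisson summation formula of Lemma \ref{Poisson} to $f = P \in \mathcal{P}_n$ and to evaluate every Gauss sum appearing on the right-hand side by means of Lemma \ref{computeg}. The observation that makes all these Gauss sums transparent is one of degree: for $m \geq 0$ every $V$ occurring in the sums of Lemma \ref{Poisson} has degree at most $n-m-1 < n = d(P)$, so that $P \nmid V$ for every nonzero $V$ in range. Writing $V = V_1 P^{\alpha}$ with $P \nmid V_1$ as in Lemma \ref{computeg}, this forces $\alpha = 0$; and since $f = P = P^1$, we are always in the branch $j = 1 = \alpha+1$, with $j$ odd, of Lemma \ref{computeg}(2). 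Thus the only cases of that lemma that ever arise are the two "$j$ odd, $j=\alpha+1$" lines, distinguished by the parity of $d(P)$.

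For the even case, Lemma \ref{Poisson} produces three contributions: the $V=0$ term $G(0,\chi_P)$, the sum over $\mathcal{M}_{\leq n-m-2}$, and the sum over $\mathcal{M}_{n-m-1}$. I would first note that $G(0,\chi_P) = 0$: by definition it equals a nonzero constant times $\sum_{u\,(\mathrm{mod}\,P)}\chi_P(u)$, and this character sum vanishes because $\chi_P$ is a non-principal quadratic character modulo the prime $P$. For the surviving $V \neq 0$, the relevant line of Lemma \ref{computeg}(2) gives $G(V,\chi_P) = \chi_P(V)|P|^{1/2}$. Substituting and absorbing the factor $|P|^{1/2}/|P| = |P|^{-1/2}$ reproduces exactly the stated identity. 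The odd case is in fact simpler, since the corresponding formula in Lemma \ref{Poisson} carries no $V=0$ term: it reads $\epsilon(q)\,q^{m+1/2}|P|^{-1}\sum_{V\in\mathcal{M}_{n-m-1}}G(V,\chi_P)$. Here $d(P)=n$ is odd, so Lemma \ref{computeg}(2) yields $G(V,\chi_P) = \epsilon(q)\chi_P(V)|P|^{1/2}$; pulling this out contributes a factor $\epsilon(q)^2 = 1$ alongside $|P|^{1/2}/|P| = |P|^{-1/2}$, giving the claimed formula.

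Since the argument is essentially a substitution, the only points deserving care are bookkeeping ones: (i) verifying the degree inequality $d(V) < d(P)$, which is what guarantees $\alpha = 0$ and places each Gauss sum in the single nonvanishing branch $j = \alpha+1 = 1$ rather than in one of the other branches of Lemma \ref{computeg}(2); (ii) checking that the $V=0$ contribution drops out in the even case; and (iii) correctly tracking the two factors of $\epsilon(q)$ in the odd case and cancelling them through $\epsilon(q)^2 = 1$. I do not expect any genuine obstacle beyond this careful bookkeeping.
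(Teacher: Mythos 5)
Your proof is correct and is precisely the paper's intended argument: the paper gives no separate proof of this lemma, stating only that it follows by combining Lemma \ref{Poisson} with Lemma \ref{computeg} and using $\epsilon(q)^2=1$, which is exactly the substitution you carry out. Your bookkeeping — the degree bound $d(V)<d(P)$ forcing $\alpha=0$ and hence the $j=\alpha+1$ odd branch, the vanishing of $G(0,\chi_P)$ for the non-principal character $\chi_P$, and the cancellation of the two $\epsilon(q)$ factors in the odd case — fills in the details faithfully and correctly.
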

The following lemmas are the equivalent of the Polya-Vinogradov inequality and the Weil bound in function fields.
\begin{lemma} 
\label{pv}
For $f\in\mathcal{M}$ not a perfect square and $m<d(f)$ we have
$$ \sum_{h \in \mathcal{M}_m} \chi_f(h) \ll |f|^{\frac12}.$$ We also have
$$ \sum_{D \in \mathcal{H}_{2g+1}} \chi_D(P) \ll q|P|^{\frac12}\qquad\emph{and}\qquad \sum_{D \in \mathcal{H}_{2g+1}} \chi_D(PQ) \ll \frac{gq\big(d(P)+d(Q)\big)}{d(P)d(Q)}|P|^{\frac12}|Q|^{\frac12}$$
for $P\ne Q\in\mathcal{P}$.
\end{lemma}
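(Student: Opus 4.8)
The plan is to treat the three estimates in order, since the last two both reduce, via Lemma \ref{5}, to the first, which is a function field Pólya--Vinogradov inequality and is the only substantial input. Everything else is an application of it together with the vanishing of complete character sums and a lattice-point count.

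For the bound $\sum_{h\in\mathcal M_m}\chi_f(h)\ll|f|^{1/2}$ I would first reduce to a primitive (square-free modulus) character. Writing $f=AB^2$ with $A$ square-free and $d(A)\ge1$ (possible since $f$ is not a perfect square), one has $\chi_f(h)=\chi_A(h)\mathds{1}_{(h,B)=1}$, and sieving the coprimality condition by Möbius gives $\sum_{h\in\mathcal M_m}\chi_f(h)=\sum_{E\mid\mathrm{rad}(B)}\mu(E)\chi_A(E)\sum_{h'\in\mathcal M_{m-d(E)}}\chi_A(h')$. Thus it suffices to bound the partial sums of the primitive quadratic character $\chi_A$ by $\ll|A|^{1/2}$: the outer sum then contributes a factor $2^{\omega(B)}\le|B|$, and $|A|^{1/2}|B|=|f|^{1/2}$. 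For the primitive sum I would complete it using the Hayes--Poisson formula of Lemma \ref{Poisson}, which expresses $\sum_{h'\in\mathcal M_M}\chi_A(h')$ in terms of the dual sums $\sum_{V\in\mathcal M_{d(A)-M-1}}G(V,\chi_A)$ (and shorter ones). Since $\chi_A$ is primitive, $|G(V,\chi_A)|=|A|^{1/2}$ when $(V,A)=1$ and $G(V,\chi_A)=0$ otherwise, which is exactly what Lemma \ref{computeg} yields prime by prime; bounding the dual sum trivially by its length $\le q^{d(A)-M-1}$ then gives $\ll|A|^{1/2}$, uniformly in $M<d(A)$. (One could instead invoke the Riemann Hypothesis for curves, but the completion argument is cleaner and directly produces the clean power of $|A|$.)

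With this in hand, the remaining two estimates follow by opening $\sum_{D\in\mathcal H_{2g+1}}\chi_D(f)$ through Lemma \ref{5}, with $f=P$ and $f=PQ$ respectively. In both cases $f$ is square-free and not a perfect square, so $\chi_f$ is nonprincipal modulo $f$; hence $\sum_{h\in\mathcal M_M}\chi_f(h)$ vanishes as soon as $M\ge d(f)$ (monic polynomials of degree $M\ge d(f)$ equidistribute among the residue classes mod $f$), while for $0\le M<d(f)$ it is $\ll|f|^{1/2}$ by the first part. Thus in Lemma \ref{5} only those $C\mid f^{\infty}$ with $0\le 2g\pm1-2d(C)<d(f)$ survive. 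For $f=P$ one has $C=P^{j}$ with $d(C)=j\,d(P)$, and the constraint pins $j$ to an interval of length $\tfrac12$, leaving $O(1)$ terms; multiplying the surviving inner sums (each $\ll|P|^{1/2}$) by the factor $q$ from the second sum in Lemma \ref{5} gives $\ll q|P|^{1/2}$. For $f=PQ$ one has $C=P^{a}Q^{b}$ with $d(C)=a\,d(P)+b\,d(Q)$, and the surviving $(a,b)$ are the lattice points with $a\,d(P)+b\,d(Q)$ in an interval of length $\tfrac12 d(PQ)$; for each of the $\ll g/d(P)$ admissible $a$ there are $\ll(d(P)+d(Q))/d(Q)$ admissible $b$, so the number of terms is $\ll g(d(P)+d(Q))/(d(P)d(Q))$, and multiplying by $q|PQ|^{1/2}$ gives the stated bound.

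The main obstacle is the first part. The completion must be carried out uniformly, and for non-square-free $f$ one cannot apply Poisson directly, because the Gauss sums of imprimitive characters (the degenerate cases $\varphi(P^{j})$ and $-|P|^{j-1}$ in Lemma \ref{computeg}) are as large as $|P^{j}|$ rather than $|P^{j}|^{1/2}$; the reduction to the square-free kernel $A$ is precisely what circumvents this, at the cost of the harmless divisor factor $2^{\omega(B)}\le|B|$. For the last two estimates the only delicate point is the lattice-point count, which must be done carefully enough to produce exactly the shape $g(d(P)+d(Q))/(d(P)d(Q))$ rather than a cruder bound.
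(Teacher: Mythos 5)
Your treatment of the second and third bounds coincides with the paper's own proof: both open $\sum_{D}\chi_D(f)$ via Lemma \ref{5}, observe that $\sum_{h\in\mathcal{M}_M}\chi_f(h)$ vanishes for $M\ge d(f)$ (complete sum of a nontrivial character) and is $\ll|f|^{1/2}$ otherwise, count the surviving $C=P^{j}$ (at most one $j$ for each of the two sums) respectively $C=P^{a}Q^{b}$ (at most $g/d(P)$ values of $a$, and $\ll 1+d(P)/d(Q)$ values of $b$ for each fixed $a$), and multiply through by the factor $q$. The genuine difference is the first bound: the paper does not prove it at all, citing \cite{aflorea2}, Lemma $2.5$, whereas you supply a proof. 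Your argument --- write $f=AB^2$ with $A$ squarefree and $d(A)\ge1$, use $\chi_f(h)=\chi_A(h)\mathds{1}_{(h,B)=1}$ and M\"obius sieving to reduce to partial sums of $\chi_A$ at a cost of $2^{\omega(B)}\le|B|$, then complete those partial sums with Lemma \ref{Poisson}, noting via Lemma \ref{computeg} that for squarefree $A$ one has $|G(V,\chi_A)|=|A|^{1/2}$ when $(V,A)=1$ and $G(V,\chi_A)=0$ otherwise, and bound the dual sums trivially by their length --- is correct: the sieve identity is valid, the inner sums of degree $\ge d(A)$ vanish since $\chi_A$ is nontrivial, the inner sums of negative degree are empty, and the bookkeeping $|A|^{1/2}|B|=|f|^{1/2}$ closes the argument. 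This makes the lemma self-contained, relying only on machinery the paper already develops (Lemmas \ref{computeg} and \ref{Poisson}), which is a small but real gain over the paper's outsourcing of this step; the cited reference can be expected to handle the squarefree case by different means (trivial bound plus functional equation), but your completion route is equally legitimate. One shared blemish, present in the paper's proof as well as yours: the count ``$\ll g/d(P)$ choices of $a$'' is literally false when $d(P)>2g+1$, since $a=0$ always survives; however, in that regime the right-hand side of the stated inequality exceeds the trivial bound $q^{2g+1}$, so the conclusion is unaffected.
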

\begin{proof}
The first inequality is proven in \cite{aflorea2}; Lemma $2.5$. For the second inequality, we use Lemma \ref{5} and then
$$ \sum_{D \in \mathcal{H}_{2g+1}} \chi_D(P) = (-1)^{\frac{(q-1)d(P)}{2}}\bigg( \sum_{j=0}^{\infty} \sum_{h \in \mathcal{M}_{2g+1-2j d(P)}} \chi_P(h) - q  \sum_{j=0}^{\infty} \sum_{h \in \mathcal{M}_{2g-1-2j d(P)}} \chi_P(h)\bigg) .$$ 
We focus on the first term above, since the second is similar to deal with. In order to get something nonzero, we need
$ 0\leq 2g+1-2jd(P)  < d(P),$ so $2j = \lfloor \frac{2g+1}{d(P)} \rfloor$, and hence there is at most one value of $j$ in the first summation above such that the character sum over $h$ is nonzero. Now we use the first part of the lemma and the conclusion follows.

The third inequality can be treated in the same way. Lemma 3.1 leads to
\begin{align*}
 \sum_{D \in \mathcal{H}_{2g+1}} \chi_{D}(PQ) =& (-1)^{\frac{(q-1)d(PQ)}{2}}\bigg( \sum_{i,j=0}^{\infty} \sum_{h \in \mathcal{M}_{2g+1-2i d(P)-2jd(Q)}} \chi_{PQ}(h)\\
 &\qquad\qquad\qquad\qquad\qquad\qquad - q  \sum_{i,j=0}^{\infty} \sum_{h \in \mathcal{M}_{2g-1-2i d(P)-2jd(Q)}} \chi_{PQ}(h) \bigg).
 \end{align*} The condition for a nonzero character sum in the first term is $ 0\leq 2g+1-2id(P)-2jd(Q)  < d(P)+d(Q)$. Before applying the first part of the lemma like before we note that there are at most $g/d(P)$ choices for $i$, and once $i$ is fixed, $\ll 1+d(P)/d(Q)$ choices for $j$. 
\end{proof}
\begin{lemma}[The Weil bound]
For $V\in\mathcal{M}$ not a perfect square we have
$$ \left|  \sum_{P \in \mathcal{P}_n} \chi_V(P) \right| \ll \frac{d(V)}{n} q^{n/2}.$$
\label{sumprimes}
\end{lemma}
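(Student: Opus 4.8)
The plan is to convert the sum $\sum_{P \in \mathcal{P}_n} \chi_V(P)$ over primes into a sum over the full von Mangoldt weight, where the Euler product structure of the associated $L$-function makes the Riemann Hypothesis for curves directly applicable. First I would recall the definition $L(u,\chi_V) = \prod_{P}(1 - \chi_V(P) u^{d(P)})^{-1}$ from \eqref{2} and take its logarithmic derivative, writing $u \frac{d}{du} \log L(u,\chi_V) = \sum_{m \geq 1} \big(\sum_{f \in \mathcal{M}_m} \Lambda(f) \chi_V(f)\big) u^m$, so that the coefficient of $u^n$ in this expansion is the Chebyshev-type sum $\psi_n(\chi_V) := \sum_{f \in \mathcal{M}_n} \Lambda(f)\chi_V(f)$. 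Since $V$ is not a perfect square, $\chi_V$ is a nontrivial character and $L(u,\chi_V)$ is a polynomial of degree (at most) $d(V)-1$; writing it as $\prod_j (1 - \gamma_j u)$ gives $\psi_n(\chi_V) = -\sum_j \gamma_j^n$, with at most $d(V)-1$ inverse roots $\gamma_j$.

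The key input is the Riemann Hypothesis for curves, which (as invoked earlier in the excerpt via Weil's theorem) forces every inverse root to satisfy $|\gamma_j| \leq q^{1/2}$. Hence $|\psi_n(\chi_V)| \leq (d(V)-1) q^{n/2} \ll d(V)\, q^{n/2}$. The second step is to pass from the prime-power sum $\psi_n$ back to the prime sum $\sum_{P \in \mathcal{P}_n} \chi_V(P) d(P) = n \sum_{P \in \mathcal{P}_n}\chi_V(P)$ by stripping off the contribution of proper prime powers $P^k$ with $k \geq 2$ and $k\, d(P) = n$. That contribution is $\sum_{k \geq 2}\; \sum_{P \in \mathcal{P}_{n/k}} d(P)\, \chi_V(P)^k$, which is bounded trivially in absolute value by $\sum_{2 \leq k \leq n} \sum_{P \in \mathcal{P}_{n/k}} d(P) \leq \sum_{2 \leq k \leq n} q^{n/k} \ll q^{n/2}$ using the crude count $\sum_{P \in \mathcal{P}_d} d(P) \leq q^d$. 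Dividing through by $n$ then yields
\[
\Big| \sum_{P \in \mathcal{P}_n} \chi_V(P) \Big| \ll \frac{d(V)}{n}\, q^{n/2} + \frac{1}{n}\, q^{n/2} \ll \frac{d(V)}{n}\, q^{n/2},
\]
which is the claimed bound.

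The only genuinely delicate point is bookkeeping the exact degree of $L(u,\chi_V)$ and the count of its inverse roots. When $V$ is not necessarily squarefree, $\chi_V$ may be induced by a primitive character modulo the squarefree kernel of $V$, and $L(u,\chi_V)$ differs from the primitive $L$-function by finitely many Euler factors at the primes dividing $V$; one must check these do not introduce inverse roots of modulus exceeding $q^{1/2}$ (they do not, since removed factors only delete roots) and that the total degree remains $O(d(V))$. I expect this degree/primitivity bookkeeping to be the main obstacle, though it is entirely routine once the functional equation and the polynomial nature of $\mathcal{L}(u,\chi_D)$ recorded in the preliminaries are used. Everything else is a mechanical application of RH for curves plus the trivial prime-power estimate.
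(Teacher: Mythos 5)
Your proposal is correct, and it is essentially the proof behind the paper's own treatment: the paper does not prove this lemma at all but simply cites \cite{rudnicktraces}, Equation 2.5, and your argument (logarithmic derivative of $\mathcal{L}(u,\chi_V)$, the fact that for $V\neq\square$ this $L$-function is a polynomial of degree at most $d(V)-1$, Weil's Riemann Hypothesis bounding all inverse roots by $q^{1/2}$, and trivial removal of the prime-power terms) is exactly the standard argument underlying that citation. The one point you gloss over is that $\chi_V(\cdot)=\big(\tfrac{V}{\cdot}\big)$ differs from the genuine Dirichlet character $\big(\tfrac{\cdot}{V}\big)$ modulo $V$ by the reciprocity sign $(-1)^{\frac{(q-1)d(V)d(\cdot)}{2}}$, so that $L(u,\chi_V)=L(\pm u,(\tfrac{\cdot}{V}))$; this substitution preserves the degree and the moduli of the inverse roots, so your primitivity and degree bookkeeping goes through unchanged.
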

\begin{proof}
See \cite{rudnicktraces}; Equation $2.5$.
\end{proof}

\begin{lemma}\label{lm4}
For $f\in\mathcal{M}$ we have
\[
\frac{1}{| \mathcal{H}_{2g+1}|}\sum_{D \in \mathcal{H}_{2g+1}} \chi_D(f^{2})=\prod_{\substack{P\in\mathcal{P}\\P|f}}\bigg(1+\frac{1}{|P|}\bigg)^{-1}+O(q^{-2g-2}).
\]
\end{lemma}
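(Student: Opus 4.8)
The plan is to evaluate the average $\frac{1}{|\mathcal{H}_{2g+1}|}\sum_{D\in\mathcal{H}_{2g+1}}\chi_D(f^2)$ by expressing the sum over squarefree $D$ in terms of a product over primes, isolating the main term as an Euler product and bounding the tail. The key observation is that $\chi_D(f^2)=\big(\frac{D}{f^2}\big)=\big(\frac{D}{f}\big)^2$, which equals $1$ whenever $(D,f)=1$ and $0$ otherwise. Thus the character sum collapses: $\chi_D(f^2)$ is simply the indicator that $D$ is coprime to $f$. The problem therefore reduces to counting the proportion of $D\in\mathcal{H}_{2g+1}$ that are coprime to the squarefree kernel of $f$.

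First I would reduce to counting squarefree monic $D$ of degree $2g+1$ coprime to $\mathrm{rad}(f)=\prod_{P|f}P$. Writing $r=\mathrm{rad}(f)$, I want the count of $D\in\mathcal{H}_{2g+1}$ with $(D,r)=1$. The natural tool is a sieve via the generating series: the Dirichlet series for squarefree monic polynomials is $\sum_{D\ \mathrm{sqfree}}|D|^{-s}=\zeta_q(s)/\zeta_q(2s)=(1-q^{1-2s})/(1-q^{1-s})$, and restricting to those coprime to $r$ removes the Euler factors at $P|r$, contributing $\prod_{P|r}(1+|P|^{-s})^{-1}$. Extracting the coefficient of degree $2g+1$ (equivalently, the residue of the pole at $u=q^{-s}=q^{-1}$) gives the main term $\frac{|\mathcal{H}_{2g+1}|}{|\mathcal{H}_{2g+1}|}\prod_{P|r}(1+|P|^{-1})^{-1}$, i.e.\ exactly the stated Euler product $\prod_{P|f}(1+|P|^{-1})^{-1}$.

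The cleaner route, and the one I would actually carry out, is to avoid sieving and instead expand the coprimality condition through inclusion–exclusion over squarefull factors, or simply to invoke Lemma \ref{5} directly with $f$ replaced by $f^2$: since $f^2$ is a perfect square, $\chi_{f^2}$ is the principal character modulo $\mathrm{rad}(f)$, and the inner sums $\sum_{h\in\mathcal{M}_{m}}\chi_{f^2}(h)$ become $\sum_{(h,r)=1}1$, which count coprime monic polynomials and are elementary. Carrying this through, the two main sums in Lemma \ref{5} combine to produce the main term, and the contribution of the $C\neq 1$ terms (those with $C\mid f^\infty$, $C\ne1$) is bounded by $O(q^{-2g-2})$ since each forces a drop in degree by at least $2d(P)\geq 2$ together with the normalizing factor $|\mathcal{H}_{2g+1}|^{-1}\asymp q^{-2g}$.

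The main obstacle is bookkeeping the error term to secure the sharp exponent $q^{-2g-2}$ rather than a weaker bound. The dominant contributions come from the leading degrees $h\in\mathcal{M}_{2g+1}$ and $h\in\mathcal{M}_{2g-1}$; the coprime counts at these two degrees are $q^{2g+1}\prod_{P|r}(1-|P|^{-1})$ and $q^{2g-1}\prod_{P|r}(1-|P|^{-1})$ respectively (once the degree exceeds $d(r)$), and after dividing by $|\mathcal{H}_{2g+1}|=q^{2g}(q-1)$ and combining with the factor of $q$ from the second sum in Lemma \ref{5}, the leading pieces assemble into $\prod_{P|r}\frac{1-|P|^{-1}}{1+|P|^{-1}}\cdot\frac{1+|P|^{-1}}{1-|P|^{-1}}$—I would need to track the Euler factors carefully so that the product telescopes to $\prod_{P|f}(1+|P|^{-1})^{-1}$, with every lower-order discrepancy (boundary effects when $2g\pm1<d(r)$, and the genuinely square terms contributed by $C\neq 1$) absorbed into $O(q^{-2g-2})$.
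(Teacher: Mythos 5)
Your reduction --- $\chi_D(f^2)$ is the indicator of $(D,f)=1$, so the lemma is a count of squarefree $D$ of degree $2g+1$ coprime to the radical of $f$ --- is correct and is exactly the paper's starting point. But the route you commit to (Lemma \ref{5} applied to $f^2$, with the main term extracted from $C=1$ and everything else pushed into the error) fails at its key step: the claim that the $C\neq1$ terms contribute $O(q^{-2g-2})$ is off by a factor of roughly $q^{2g}$. Concretely, take $f=P$ prime and let $\Pi(m)=\#\{h\in\mathcal{M}_m:(h,P)=1\}$, so that $\Pi(m)=q^m(1-|P|^{-1})$ for $m\geq d(P)$. Normalizing by $|\mathcal{H}_{2g+1}|=q^{2g}(q-1)$, the $C=1$ term of Lemma \ref{5} equals
\[
\frac{\Pi(2g+1)-q\,\Pi(2g-1)}{q^{2g}(q-1)}=1-\frac{1}{|P|},
\]
which is \emph{not} the main term of the lemma, while the term $C=P^{j}$ with $j\geq1$ and $2g-1-2jd(P)\geq d(P)$ contributes $(1-|P|^{-1})|P|^{-2j}$, so that the $C\neq1$ terms add up to
\[
\Big(1-\frac{1}{|P|}\Big)\sum_{j\geq1}\frac{1}{|P|^{2j}}=\frac{1}{|P|(|P|+1)},
\]
up to boundary corrections that are exponentially small in $g$ --- a constant-size quantity, not $O(q^{-2g-2})$. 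It is precisely this contribution that turns $1-|P|^{-1}$ into $(1-|P|^{-1})+\frac{1}{|P|(|P|+1)}=(1+|P|^{-1})^{-1}$. Your degree-drop heuristic only shows that the $C$ term is $|C|^{-2}$ times the $C=1$ term; a geometric series of such terms is not small and must be kept in the main term. The difficulty is already visible at the end of your write-up: the product you hope will ``telescope'' is identically $1$, which is the symptom that the $C=1$ piece alone can only produce $\prod_{P\mid f}(1-|P|^{-1})$, the wrong Euler factor.

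By contrast, the paper never invokes Lemma \ref{5} here: for $f=P^r$ it counts directly, using that squarefree $D\in\mathcal{H}_n$ divisible by $P$ correspond to squarefree $D'\in\mathcal{H}_{n-d(P)}$ coprime to $P$, and unwinds this recursion to the alternating sum $|\mathcal{H}_{2g+1}|\sum_{j=0}^{k}(-1)^j|P|^{-j}$ with $k=\lfloor(2g+1)/d(P)\rfloor$, whose tail gives the error. Your first, generating-function route ($\zeta_q(s)/\zeta_q(2s)$ times $\prod_{P\mid f}(1+|P|^{-s})^{-1}$, then a residue at the pole) does produce the correct main term and would be a legitimate alternative for that part; but the error term, which you left untreated, is where all the difficulty sits. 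The factors $(1+u^{d(P)})^{-1}$ (with $u=q^{-s}$) have poles on $|u|=1$, and their contribution to the normalized count is of size $O_f(q^{-2g-1})$, with an additional factor of $g$ when two prime divisors of $f$ share a degree (double poles). Indeed, for $f=P_1P_2$ with $d(P_1)=d(P_2)=1$ a partial-fraction computation shows the true discrepancy is of order $g\,q^{-2g-1}$, so the sharp exponent you aim to ``secure'' cannot be achieved uniformly over all $f\in\mathcal{M}$: it is a prime-power phenomenon. The workable plan is the paper's --- prove the statement for $f=P^r$ (all that is needed for $A_2(\Phi,g)$ and $B_1(\Phi,g)$), and for the moduli $P^rQ^s$ entering $B_2^{\mathrm{ee}}(\Phi,g)$ settle for a weaker error such as $O_f(g\,q^{-2g-1})$, which is still negligible in that application.
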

\begin{proof}
We shall prove the lemma for the case $f=P^r$. The argument can be easily extended to cover all $f\in\mathcal{M}$.

Assume $2g+1=kd(P)+a$ with $0\leq a\leq d(P)-1$. We have 
$$\sum_{D \in \mathcal{H}_{2g+1}} \chi_D(P^{2r})=\sum_{\substack{D \in \mathcal{H}_{2g+1} \\ (D,P)=1}} 1 = | \mathcal{H}_{2g+1}| -\sum_{\substack{D \in \mathcal{H}_{2g+1-d(P)} \\ (D,P)=1}} 1.$$ 
Furthermore,
$$  \sum_{\substack{D \in \mathcal{H}_{2g+1-d(P)} \\ (D,P)=1}}  1 =  \sum_{D \in \mathcal{H}_{2g+1-d(P)}}  1 - \sum_{\substack{D \in \mathcal{H}_{2g+1-2d(P)} \\ (D,P)=1}}  1=\frac{| \mathcal{H}_{2g+1}|}{|P|} - \sum_{\substack{D \in \mathcal{H}_{2g+1-2d(P)} \\ (D,P)=1}}  1.$$ We repeat this argument and get
\[
\sum_{D \in \mathcal{H}_{2g+1}} \chi_D(P^{2r})=| \mathcal{H}_{2g+1}|\sum_{j=0}^{k-1}\frac{(-1)^j}{|P|^j}+(-1)^k\sum_{\substack{D \in \mathcal{H}_{a} \\ (D,P)=1}}  1=| \mathcal{H}_{2g+1}|\sum_{j=0}^{k}\frac{(-1)^j}{|P|^j}.
\]
Hence
\[
\frac{1}{| \mathcal{H}_{2g+1}|}\sum_{D \in \mathcal{H}_{2g+1}} \chi_D(P^{2r})=\bigg(1+\frac{1}{|P|}\bigg)^{-1}+O\big(|P|^{-k-1}\big),
\]
and we obtain the lemma.
\end{proof}


\begin{lemma}\label{lambdasquared}
We have
\[
\sum_{f\in\mathcal{M}_n}\Lambda(f)^2=n\sum_{d|n}\frac {\alpha(d)}{d}q^{\frac nd},
\]
where $\alpha(d)=\prod_{p|d}(1-p)$.
\end{lemma}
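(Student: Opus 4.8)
The plan is to reduce the sum over all of $\mathcal{M}_n$ to a sum over prime powers, since $\Lambda(f)$ vanishes unless $f$ is a power of a single irreducible. First I would note that for monic $f$ we have $\Lambda(f)^2 = d(P)^2$ precisely when $f = P^k$ for some $P \in \mathcal{P}$ and $k \geq 1$, and $\Lambda(f)^2 = 0$ otherwise. Grouping the prime powers $P^k \in \mathcal{M}_n$ according to the degree $d = d(P)$ (so that $d \mid n$ and $k = n/d$), and counting the $\pi_q(d)$ irreducibles of each such degree, gives
\[
\sum_{f\in\mathcal{M}_n}\Lambda(f)^2 = \sum_{d\mid n} d^2\,\pi_q(d).
\]

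Next I would insert the Prime Polynomial Theorem \eqref{pnt} in the form $d\,\pi_q(d) = \sum_{e\mid d}\mu(e)\,q^{d/e}$, so that the right-hand side becomes $\sum_{d\mid n} d\sum_{e\mid d}\mu(e)q^{d/e}$. Writing $d = em$ and summing over the factorizations with $em \mid n$ converts this into
\[
\sum_{m\mid n} m\,q^m \sum_{e\mid (n/m)} e\,\mu(e).
\]
The inner sum is multiplicative in $N=n/m$, and on a prime power $p^a$ it reduces to $1 - p$; hence $\sum_{e \mid N} e\mu(e) = \prod_{p\mid N}(1-p) = \alpha(N)$, where the product runs over the rational primes dividing $N$, consistent with the definition $\alpha(d)=\prod_{p|d}(1-p)$ in the statement.

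Finally, I would reindex by replacing the summation variable $m$ with $n/d$, which sends $\sum_{m\mid n} m\,q^m\,\alpha(n/m)$ to $\sum_{d\mid n}(n/d)\,q^{n/d}\,\alpha(d) = n\sum_{d\mid n}\frac{\alpha(d)}{d}q^{n/d}$, matching the claimed formula. The argument is essentially a Möbius/Dirichlet-convolution bookkeeping exercise with no serious obstacle; the only steps requiring care are the double reindexing (first $d = em$, then $m \mapsto n/d$) and the verification that $\sum_{e\mid N}e\mu(e)$ equals exactly $\alpha(N)$, both of which I would carry out explicitly to ensure the product stays over integer primes as in the definition of $\alpha$.
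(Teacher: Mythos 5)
Your proposal is correct and follows essentially the same route as the paper: reduce to prime powers, apply the Prime Polynomial Theorem \eqref{pnt}, reindex the resulting double divisor sum, and identify $\sum_{e\mid d}e\mu(e)=\prod_{p\mid d}(1-p)=\alpha(d)$ via multiplicativity. The only cosmetic differences are that the paper indexes prime powers by the exponent (writing the first sum as $\sum_{d\mid n}\frac{n^2}{d^2}\pi_q(\frac nd)$ rather than your $\sum_{d\mid n}d^2\pi_q(d)$) and leaves the final multiplicative identity implicit, which you spell out.
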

\begin{proof}
From equation \eqref{pnt}, we get
\begin{align*}
\sum_{f\in\mathcal{M}_n}\Lambda(f)^2=\sum_{d|n}\frac{n^2}{d^2}\ \pi\Big(\frac{n}{d}\Big)=\sum_{d|n}\frac nd\sum_{r|\frac nd}\mu(r)q^{\frac {n}{dr}}=n\sum_{d|n}\frac {q^{\frac nd}}{d}\sum_{r|d}r\mu(r),
\end{align*}
and the lemma follows.
\end{proof}

\section{Computing the $1$-level density}
\label{1density}
Here, we will prove Theorem \ref{1level}. Recall that
\[
\Big\langle \Sigma_1(\Phi,g,D)\Big\rangle_{\mathcal{H}_{2g+1}}=\frac{1}{|\mathcal{H}_{2g+1}|}\sum_{D\in\mathcal{H}_{2g+1}}\sum_{j=1}^{2g}\Phi(2g\theta_{j,D}).
\]
By computing the logarithmic derivative $\mathcal{L}'/\mathcal{L}$ in two different ways, using \eqref{2} and \eqref{3}, we get
\[
-q^{\frac n2}\sum_{j=1}^{2g}e(-n\theta_{j,D})=\sum_{f\in\mathcal{M}_{n}}\Lambda(f)\chi_D(f).
\]
for $n>0$. So
\begin{equation}\label{7}
-\sum_{j=1}^{2g}e(n\theta_{j,D})=\sum_{f\in\mathcal{M}_{|n|}}\frac{\Lambda(f)\chi_D(f)}{|f|^{\frac12}},
\end{equation}
which is valid for $n$ both positive and negative.

Now $\Phi(2g\theta)=\frac{1}{2g}\sum_{|n|\leq N}\widehat{\Phi}(\frac{n}{2g})e(n\theta)$. Hence
\begin{eqnarray*}
\sum_{j=1}^{2g}\Phi(2g\theta_{j,D})&=&\widehat{\Phi}(0)+\frac{1}{2g}\sum_{0<|n|\leq N}\widehat{\Phi}\Big(\frac{n}{2g}\Big)\sum_{\theta_{j,D}}e(n\theta_{j,D})\\
&=&\widehat{\Phi}(0)-\frac{1}{g}\sum_{n\leq N}\widehat{\Phi}\Big(\frac{n}{2g}\Big)\sum_{f\in\mathcal{M}_{n}}\frac{\Lambda(f)\chi_D(f)}{|f|^{\frac12}}.
\end{eqnarray*}
Thus
\[
\Big\langle \Sigma_1(\Phi,g,D)\Big\rangle_{\mathcal{H}_{2g+1}}=\widehat{\Phi}(0)-A(\Phi,g),
\]
where
\[
A(\Phi,g)=\frac{1}{g|\mathcal{H}_{2g+1}|}\sum_{n\leq N}\widehat{\Phi}\Big(\frac{n}{2g}\Big)\sum_{f\in\mathcal{M}_{n}}\frac{\Lambda(f)}{|f|^{\frac12}}\sum_{D\in\mathcal{H}_{2g+1}}\chi_D(f).
\]

We decompose $A(\Phi,g)=A_1(\Phi,g)+A_2(\Phi,g)$, where
\begin{eqnarray}\label{605}
A_1(\Phi,g)=\frac{1}{g|\mathcal{H}_{2g+1}|}\sum_{n\leq N}\widehat{\Phi}\Big(\frac{n}{2g}\Big)\sum_{\substack{P\in\mathcal{P}_{n/r}\\r\ \textrm{odd}}}\frac{d(P)}{|P|^{\frac r2}}\sum_{D\in\mathcal{H}_{2g+1}}\chi_D(P^r)
\end{eqnarray}
and
\begin{eqnarray*}
A_2(\Phi,g)=\frac{1}{g|\mathcal{H}_{2g+1}|}\sum_{n\leq N/2}\widehat{\Phi}\Big(\frac{n}{g}\Big)\sum_{\substack{P\in\mathcal{P}_{n/r}\\r\geq1}}\frac{d(P)}{|P|^{r}}\sum_{D\in\mathcal{H}_{2g+1}}\chi_D(P^{2r}).
\end{eqnarray*}

\subsection{Evaluating $A_1(\Phi,g)$}

By the Polya-Vinogradov inequality in Lemma \ref{pv}, we have
$$ \sum_{D\in\mathcal{H}_{2g+1}}\chi_D(P^r)\ll q|P|^{\frac12}$$ for $r$ odd. Combining this with the Prime Polynomial Theorem, the contribution of the terms with $r\geq 3$ in \eqref{605} is
\begin{equation}\label{r=3}
  \ll g^{-1}q^{-2g}\sum_{n\leq N}\sum_{r\geq3}q^{-\frac{(r-3)n}{2r}}\ll q^{-2g},
  \end{equation} 
  and the contribution of those with $r=1$ is
  \begin{equation}\label{r=1}
  \ll q^{N-2g}/g.
  \end{equation}
  
  In what follows we shall consider various different ranges of $n$ and $N$: $\frac{2g}{2k+1}\leq n\leq N\leq\min\{\frac{2g-1}{2k},4g\}$ for $k\in\mathbb{N}$; and $\frac{2g}{2k}=\frac{g}{k}\leq n\leq N\leq\frac{2g-1}{2k-1}$ for $k\in\mathbb{N}_{\geq1}$. We denote, for $k\in\mathbb{N}$ and $k\in\mathbb{N}_{\geq1}$, respectively,
\begin{align*}
A_1(\Phi,g;2k+1)=&\frac{1}{g|\mathcal{H}_{2g+1}|}\sum_{\frac{2g}{2k+1}\leq n\leq N\leq\min\{\frac{2g-1}{2k},4g\}}\widehat{\Phi}\Big(\frac{n}{2g}\Big)\sum_{\substack{P\in\mathcal{P}_{n/r}\\r\ \textrm{odd}}}\frac{d(P)}{|P|^{\frac r2}}\sum_{D\in\mathcal{H}_{2g+1}}\chi_D(P^r)
\end{align*}
and
\begin{align*}
&A_1(\Phi,g;2k)=\frac{1}{g|\mathcal{H}_{2g+1}|}\sum_{\frac gk\leq n\leq N\leq\frac{2g-1}{2k-1}}\widehat{\Phi}\Big(\frac{n}{2g}\Big)\sum_{\substack{P\in\mathcal{P}_{n/r}\\r\ \textrm{odd}}}\frac{d(P)}{|P|^{\frac r2}}\sum_{D\in\mathcal{H}_{2g+1}}\chi_D(P^r).
\end{align*}

\subsubsection{The range $\frac{2g}{2k+1}\leq n\leq N\leq\min\{\frac{2g-1}{2k},4g\}$}\label{firstrange}
Using \eqref{r=3} it follows that
\[
A_1(\Phi,g;2k+1)=\frac{1}{g|\mathcal{H}_{2g+1}|}\sum_{\frac{2g}{2k+1}\leq n\leq N}\widehat{\Phi}\Big(\frac{n}{2g}\Big)\sum_{P\in\mathcal{P}_{n}}\frac{d(P)}{|P|^{\frac 12}}\sum_{D\in\mathcal{H}_{2g+1}}\chi_D(P)+O\big(q^{-2g}\big).
\]
In view of Lemma 3.1 we write
$$A_1(\Phi,g;2k+1)=A_{11}(\Phi,g;2k+1)-qA_{12}(\Phi,g;2k+1)+O(q^{-2g}),$$ where
$$ A_{11}(\Phi,g;2k+1) =\frac{1}{g|\mathcal{H}_{2g+1}|}\sum_{\frac{2g}{2k+1}\leq n\leq N}(-1)^{\frac{(q-1)n}{2} } \widehat{\Phi}\Big(\frac{n}{2g}\Big) \sum_{P\in\mathcal{P}_{n}}\frac{d(P)}{|P|^{\frac12}} \sum_{C| P^{\infty}}  \sum_{h \in \mathcal{M}_{2g+1-2 d(C)}} \chi_P(h)$$
and
$$ A_{12}(\Phi,g;2k+1) =\frac{1}{g|\mathcal{H}_{2g+1}|}\sum_{\frac{2g}{2k+1}\leq n\leq N}(-1)^{\frac{(q-1)n}{2} }\widehat{\Phi}\Big(\frac{n}{2g}\Big) \sum_{P\in\mathcal{P}_{n}}\frac{d(P)}{|P|^{\frac12}}\sum_{C| P^{\infty}}  \sum_{h \in \mathcal{M}_{2g-1-2 d(C)}} \chi_P(h).$$
Note that $C=P^j$ for some $j \geq 0$ in the equations above. Now for a given $P\in\mathcal{P}_{n}$, the character sum $\sum_{h \in \mathcal{M}_{2g\pm1-2j d(P)}} \chi_P(h)$ is nonzero only if $0 \leq 2g\pm1-2j d(P) < d(P).$ There is at most one value $j$, $j=j_0$ with $2j_0=\lfloor \frac{2g\pm1}{n} \rfloor=2k$, which satisfies this condition. So
$$ A_{11}(\Phi,g;2k+1) =\frac{1}{g|\mathcal{H}_{2g+1}|}\sum_{\frac{2g+2}{2k+1}\leq n\leq N}(-1)^{\frac{(q-1)n}{2} }\widehat{\Phi}\Big(\frac{n}{2g}\Big) \sum_{P\in\mathcal{P}_{n}}\frac{d(P)}{|P|^{\frac12}} \sum_{h \in \mathcal{M}_{2g+1-2kn}} \chi_P(h)$$
and
$$ A_{12}(\Phi,g;2k+1) =\frac{1}{g|\mathcal{H}_{2g+1}|}\sum_{\frac{2g}{2k+1}\leq n\leq N}(-1)^{\frac{(q-1)n}{2} }\widehat{\Phi}\Big(\frac{n}{2g}\Big)\sum_{P\in\mathcal{P}_{n}}\frac{d(P)}{|P|^{\frac12}}\sum_{h \in \mathcal{M}_{2g-1-2kn}} \chi_P(h).$$

Now we use Lemma \ref{poissonprime} for the sums over $h$ and get 
\begin{align*}
A_{11}(\Phi,g;&2k+1)  = \frac{q}{g(q-1)}\sum_{\substack{\frac{2g+2}{2k+1}\leq n\leq N \\ n \text{ even}}} \widehat{\Phi}\Big(\frac{n}{2g}\Big)\sum_{P\in\mathcal{P}_{n}}\frac{d(P)}{|P|^{2k+1}}\\
&\qquad\qquad\qquad\qquad  \bigg(  (q-1) \sum_{V \in \mathcal{M}_{\leq (2k+1)n-2g-3}} \chi_P(V)- \sum_{V \in \mathcal{M}_{(2k+1)n-2g-2}} \chi_P(V)\bigg) \\
&\qquad\qquad+ \frac{(-1)^{\frac{q-1}{2}}q^{\frac32}}{g(q-1)}\sum_{\substack{\frac{2g+2}{2k+1}\leq n\leq N \\ n \text{ odd}}} \widehat{\Phi}\Big(\frac{n}{2g}\Big)\sum_{P\in\mathcal{P}_{n}}\frac{d(P)}{|P|^{2k+1}} \sum_{V \in \mathcal{M}_{(2k+1)n-2g-2}} \chi_P(V).
\end{align*}
When $n$ is odd, $d(V)$ is odd, so $V$ cannot be a square. When $V \neq \square$, either $n$ odd or even, then using Lemma 3.6 for the sum over $P$, we get that the contribution from $V \neq \square$ is $O\big(q^{\frac N2 -2g-\frac32}/k\big)$. When $V=\square$, note that $\chi_P(V)=1$, since $d(V) <d(P)$. Hence the term in parenthesis (coming from $n$ even) is equal to
$$ (q-1) \sum_{V \in \mathcal{M}_{\leq (2k+1)\frac{n}{2}-g-2}} 1 - \sum_{V \in \mathcal{M}_{(2k+1)\frac n2-g-1}} 1 = -1.$$
Hence
\begin{align*}
A_{11}(\Phi,g;2k+1) &= -\frac{q}{g(q-1)}\sum_{\substack{\frac{2g+2}{2k+1}\leq n\leq N \\ n \text{ even}}} \widehat{\Phi}\Big(\frac{n}{2g}\Big)\sum_{P\in\mathcal{P}_{n}}\frac{d(P)}{|P|^{2k+1}} + O \big( q^{\frac N2 -2g-\frac32}/k \big)\\
&= -\frac{q}{g(q-1)}\sum_{\substack{\frac{2g+2}{2k+1}\leq n\leq N \\ n \text{ even}}} \widehat{\Phi}\Big(\frac{n}{2g}\Big) q^{-(2k+1)n} \sum_{d|n} \mu(d) q^{\frac{n}{d}}+O \big( q^{\frac N2 -2g-\frac32}/k \big)\\
&= -\frac{q}{g(q-1)}\sum_{\substack{\frac{2g+2}{2k+1}\leq n\leq N \\ n \text{ even}}} \widehat{\Phi}\Big(\frac{n}{2g}\Big) q^{-2kn} +O \big( q^{\frac N2 -2g-\frac32}/k \big).
\end{align*}
Here the second equality follows from the Prime Polynomial Theorem. 

Similarly we can compute $A_{12}(\Phi,g;2k+1)$,
\begin{align*}
A_{12}(\Phi,g;2k+1) &=-\frac{1}{gq(q-1)}\sum_{\substack{\frac{2g}{2k+1}\leq n\leq N \\ n \text{ even}}} \widehat{\Phi}\Big(\frac{n}{2g}\Big) q^{-2kn}+O \big( q^{\frac N2 -2g-\frac32}/k \big).
\end{align*}
Thus
\begin{align}\label{A12ndbound}
A_{1}(\Phi,g;2k+1) =&\frac{\mathds{1}_{(2k+1)|g}}{g(q-1)}\widehat{\Phi}\Big(\frac{1}{2k+1}\Big) q^{-\frac{4kg}{2k+1}}-\frac{1}{g}\sum_{\frac{g+1}{2k+1}\leq n\leq \frac N2} \widehat{\Phi}\Big(\frac{n}{g}\Big) q^{-4kn} \nonumber\\
&\qquad\qquad+O \big( q^{\frac N2 -2g-\frac12}/k \big).
\end{align}

\subsubsection{The range  $\frac{2g}{2k}=\frac{g}{k}\leq n\leq N\leq\frac{2g-1}{2k-1}$}
  As above we can write $$A_1(\Phi,g;2k)=A_{11}(\Phi,g;2k)-qA_{12}(\Phi,g;2k)+O(q^{-2g}),$$ where
$$ A_{11}(\Phi,g;2k) =\frac{1}{g|\mathcal{H}_{2g+1}|}\sum_{\frac gk\leq n\leq N}(-1)^{\frac{(q-1)n}{2}}\widehat{\Phi}\Big(\frac{n}{2g}\Big) \sum_{P\in\mathcal{P}_{n}}\frac{d(P)}{|P|^{\frac 12}}\sum_{C| P^{\infty}}  \sum_{h \in \mathcal{M}_{2g+1-2 d(C)}} \chi_P(h)$$
and
$$ A_{12}(\Phi,g;2k) =\frac{1}{g|\mathcal{H}_{2g+1}|}\sum_{\frac gk\leq n\leq N}(-1)^{\frac{(q-1)n}{2} }\widehat{\Phi}\Big(\frac{n}{2g}\Big)  \sum_{P\in\mathcal{P}_{n}}\frac{d(P)}{|P|^{\frac 12}}\sum_{C| P^{\infty}}  \sum_{h \in \mathcal{M}_{2g-1-2 d(C)}} \chi_P(h).$$
The only nonzero contribution comes from the term $C=P^{j_0}$ with $2j_0=\lfloor \frac{2g\pm1}{n} \rfloor$.
 It is then easy to verify that $A_{12}(\Phi,g;2k)=0$, and the only possible nonzero contribution to $A_{11}(\Phi,g;2k)$ comes from $n=\frac gk$. So
$$ A_1(\Phi,g;2k) =\frac{\mathds{1}_{k|g}(-1)^{\frac{g(q-1)}{2k}}}{g|\mathcal{H}_{2g+1}|}\widehat{\Phi}\Big(\frac{1}{2k}\Big)  \sum_{P\in\mathcal{P}_{g/k}}\frac{d(P)}{|P|^{\frac12}}\sum_{h \in \mathcal{M}_{1}} \chi_P(h)+O(q^{-2g}).$$
The Weil bound in Lemma 3.6 then implies that
\begin{equation}\label{A13rdbound}
A_1(\Phi,g;2k)\ll q^{-2g}.
\end{equation}

\subsubsection{Combining the ranges}

From \eqref{A12ndbound} and \eqref{A13rdbound} we get that if $\frac{2g}{2k+1}\leq n\leq N\leq\frac{2g-1}{2k-1}$ for $k\geq1$, or $2g\leq n\leq N<4g$ then
\begin{align*}
&\frac{1}{g|\mathcal{H}_{2g+1}|}\sum_{\frac{2g}{2k+1}\leq n\leq N}\widehat{\Phi}\Big(\frac{n}{2g}\Big)\sum_{\substack{P\in\mathcal{P}_{n/r}\\r\ \textrm{odd}}}\frac{d(P)}{|P|^{\frac r2}}\sum_{D\in\mathcal{H}_{2g+1}}\chi_D(P^r)\\
&\qquad\qquad=\frac{\mathds{1}_{(2k+1)|g}}{g(q-1)}\widehat{\Phi}\Big(\frac{1}{2k+1}\Big) q^{-\frac{4kg}{2k+1}} -\frac{1}{g}\sum_{\frac{g+1}{2k+1}\leq n\leq \min\{\frac N2,\frac{g-1}{2k}\}} \widehat{\Phi}\Big(\frac{n}{g}\Big) q^{-4kn}\nonumber\\
&\qquad\qquad\qquad\qquad+O \big( q^{\min\{\frac{N}{2},\frac{g-1}{2k}\}-2g-\frac12}/k \big).
\end{align*}
Thus, if $\frac{2g}{2K+1}\leq N\leq\frac{2g-1}{2K-1}$ for some $K\geq1$, or $2g\leq N<4g$ (corresponding to $K=0$), then we have
\begin{align*}
A_1(\Phi,g)&=\sum_{k\geq K}\frac{\mathds{1}_{(2k+1)|g}}{g(q-1)}\widehat{\Phi}\Big(\frac{1}{2k+1}\Big) q^{-\frac{4kg}{2k+1}}-\frac{1}{g}\sum_{k\geq K}\sum_{\frac{g+1}{2k+1}\leq n\leq \min\{\frac N2,\frac{g-1}{2k}\}} \widehat{\Phi}\Big(\frac{n}{g}\Big) q^{-4kn}\\
&\qquad\qquad+O \big( q^{\min\{\frac{N}{2},\frac{g-1}{2K}\}-2g-\frac12}/K \big).
\end{align*}
Also note from \eqref{r=1} and the previous subsection that we can truncate the above sums over $k$ at $k\leq K'$ at the cost of $O\big(q^{\frac{g-1}{K'+1}-2g}/g\big)$.

\subsection{Evaluating $A_2(\Phi,g)$}

From Lemma \ref{lm4} we have
\begin{eqnarray*}
A_2(\Phi,g)=\frac{1}{g}\sum_{n\leq \frac N2}\widehat{\Phi}\Big(\frac{n}{g}\Big)\sum_{\substack{P\in\mathcal{P}_{n/r}\\r\geq1}}\frac{d(P)}{|P|^{r}}-c(\Phi,g)+O\big(q^{-2g-2}\big),
\end{eqnarray*}
where
\[
c(\Phi,g)=\frac{1}{g}\sum_{n\leq \frac N2}\widehat{\Phi}\Big(\frac{n}{g}\Big)\sum_{\substack{P\in\mathcal{P}_{n/r}\\r\geq1}}\frac{d(P)}{|P|^{r}(|P|+1)}.
\]
The first term can be written as
\begin{align*}
\frac{1}{g} \sum_{n\leq \frac N2}\widehat{\Phi}\Big(\frac{n}{g}\Big)\sum_{f\in\mathcal{M}_n} \frac{\Lambda(f)}{|f|} =\frac{1}{g} \sum_{n\leq\frac N2}\widehat{\Phi}\Big(\frac{n}{g}\Big). 
\end{align*} 
Hence
\begin{eqnarray*}
A_2(\Phi,g)=\frac{1}{g} \sum_{n\leq \frac N2}\widehat{\Phi}\Big(\frac{n}{g}\Big)-c(\Phi,g)+O\big(q^{-2g-2}\big).
\end{eqnarray*}

\section{Computing the pair correlation}
\label{2correlation}
Here, we prove Theorem \ref{paircorth}. Recall that the pair correlation function is given by
\begin{align*}
\Big\langle \Sigma_2(\Phi,g,D)\Big\rangle_{\mathcal{H}_{2g+1}}=\frac{1}{2g|\mathcal{H}_{2g+1}|}\sum_{D\in\mathcal{H}_{2g+1}}\sum_{1\leq j, k\leq2g}\Phi\big(2g(\theta_{j,D}-\theta_{k,D})\big).
\end{align*}

In view of \eqref{7} we have
\[
\sum_{1\leq j,k\leq 2g}e\big(n(\theta_{j,D}-\theta_{k,D})\big)=\bigg|\sum_{f\in\mathcal{M}_{|n|}}\frac{\Lambda(f)\chi_D(f)}{|f|^{\frac12}}\bigg|^2
\]
for $n\ne0$. Hence
\begin{eqnarray*}
\frac{1}{2g}\sum_{1\leq j,k\leq 2g}\Phi\big(2g(\theta_{j,D}-\theta_{k,D})\big)&=&\widehat{\Phi}(0)+\frac{1}{4g^2}\sum_{0<|n|\leq N}\widehat{\Phi}\Big(\frac{n}{2g}\Big)\sum_{1\leq j,k\leq 2g}e\big(n(\theta_{j,D}-\theta_{k,D})\big)\\
&=&\widehat{\Phi}(0)+\frac{1}{2g^2}\sum_{n\leq N}\widehat{\Phi}\Big(\frac{n}{2g}\Big)\bigg|\sum_{f\in\mathcal{M}_{n}}\frac{\Lambda(f)\chi_D(f)}{|f|^{\frac12}}\bigg|^2.
\end{eqnarray*}
Thus
\[
\Big\langle \Sigma_2(\Phi,g,D)\Big\rangle_{\mathcal{H}_{2g+1}}=\widehat{\Phi}(0)+B(\Phi,g),
\]
where
$$ B(\Phi,g) = \frac{1}{2g^2 | \mathcal{H}_{2g+1}|} \sum_{n\leq N}\widehat{\Phi}\Big(\frac{n}{2g}\Big)\sum_{f_1,f_2 \in \mathcal{M}_n} \frac{\Lambda(f_1) \Lambda(f_2)}{|f_1|^{\frac12}|f_2|^{\frac12}} \sum_{D \in \mathcal{H}_{2g+1}} \chi_D(f_1f_2).$$ 

Let $B_1(\Phi,g)$ be the diagonal term corresponding to $f_1=f_2$ in the expression above, and let $B_2(\Phi,g)$ be the sum for which $f_1 \neq f_2$. Write $f_1=P^r$ and $f_2=Q^s.$ Further decompose $B_2(\Phi,g) = \bee+ \beo+\boe+\boo$, according to the parity of the pair $(r,s)$.

\subsection{Evaluating $B_1(\Phi,g)$}

We have
\begin{align*}
B_1(\Phi,g)=  \frac{1}{2g^2  | \mathcal{H}_{2g+1}|} \sum_{n \leq N} \widehat{\Phi} \Big( \frac{n}{2g} \Big) \sum_{\substack{P \in \mathcal{P}_{n/r}\\r\geq 1}}\frac{d(P)^2}{|P|^{r}} \sum_{D \in \mathcal{H}_{2g+1}} \chi_D(P^{2r}).
\end{align*}
From Lemma \ref{lm4} we get
\begin{align*}
B_1(\Phi,g)=  \frac{1}{2g^2  } \sum_{n \leq N} \widehat{\Phi} \Big( \frac{n}{2g} \Big) \sum_{\substack{P \in \mathcal{P}_{n/r}\\r\geq 1}}\frac{d(P)^2}{|P|^{r}}+c_1(\Phi,g)+O\big(q^{-2g-2}\big),
\end{align*}
where
\[
c_1(\Phi,g)= -\frac{1}{2g^2} \sum_{n \leq N} \widehat{\Phi} \Big( \frac{n}{2g} \Big) \sum_{\substack{P \in \mathcal{P}_{n/r}\\r\geq 1}}\frac{d(P)^2}{|P|^{r}(|P|+1)}.
\]
We rewrite the first term as
\[
 \frac{1}{2g^2  } \sum_{n \leq N} \widehat{\Phi} \Big( \frac{n}{2g} \Big) \sum_{f \in \mathcal{M}_{n}}\frac{\Lambda(f)^2}{|f|}
\]
Using Lemma \ref{lambdasquared} leads to
 $$B_{1}(\Phi,g) = \frac{1}{2g^2} \sum_{n \leq N} \widehat{\Phi} \Big( \frac{n}{2g} \Big) n\sum_{d|n}\frac {\alpha(d)}{d}q^{\frac nd-n} +c_1(\Phi)+O\big(q^{-2g-2}\big).$$

\subsection{Evaluating $\bee$}
We have
\begin{align*}
\bee =  \frac{1}{2g^2  | \mathcal{H}_{2g+1}|} \sum_{n \leq \frac N2} \widehat{\Phi} \Big( \frac{n}{g} \Big) \sum_{\substack{P \in \mathcal{P}_{n/r}\\r\geq 1}} \sum_{\substack{ Q \in \mathcal{P}_{{n/s}} \\ Q\ne P,\ s\geq 1}} \frac{d(P)d(Q)}{|P|^r|Q|^s} \sum_{D \in \mathcal{H}_{2g+1}} \chi_D(P^{2r}Q^{2s}).
\end{align*}
Hence, using Lemma 3.7,
\begin{align*}
\bee=&\frac{1}{2g^2} \sum_{n \leq \frac N2} \widehat{\Phi} \Big( \frac{n}{g} \Big) \sum_{\substack{P \in \mathcal{P}_{n/r}\\r\geq 1}} \sum_{\substack{ Q \in \mathcal{P}_{{n/s}} \\ Q\ne P,\ s\geq 1}} \frac{d(P)d(Q)}{|P|^{r-1}|Q|^{s-1}(|P|+1)(|Q|+1)}\\
&\qquad\qquad+O(q^{-2g-2}/g)\\
=&\frac{1}{2g^2} \sum_{n \leq \frac N2} \widehat{\Phi} \Big( \frac{n}{g} \Big)\bigg( \sum_{\substack{P \in \mathcal{P}_{n/r}\\r\geq 1}}  \frac{d(P)}{|P|^{r-1}(|P|+1)}\bigg)^2+c_2(\Phi,g)+O\big(q^{-2g-2}/g\big),
\end{align*}
where
\begin{align*}
c_2(\Phi,g)=-\frac{1}{2g^2} \sum_{n \leq N/2} \widehat{\Phi} \Big( \frac{n}{g} \Big) \sum_{\substack{P \in \mathcal{P}_{n/r}\\r\geq 1}}\frac{d(P)^2}{|P|^{2r-2}(|P|+1)^2}.
\end{align*}
For the first term, we write it as
\[
\frac{1}{2g^2} \sum_{n \leq \frac N2} \widehat{\Phi} \Big( \frac{n}{g} \Big)\bigg( \sum_{\substack{P \in \mathcal{P}_{n/r}\\r\geq 1}}  \frac{d(P)}{|P|^{r}}-\sum_{\substack{P \in \mathcal{P}_{n/r}\\r\geq 1}}  \frac{d(P)}{|P|^{r}(|P|+1)}\bigg)^2.
\]
From the Prime Polynomial Theorem, this is equal to
\[
\frac{1}{2g^2} \sum_{n \leq\frac N2} \widehat{\Phi} \Big( \frac{n}{g} \Big)+c_3(\Phi,g)+c_4(\Phi,g),
\]
where
\[
c_3(\Phi,g)=-\frac{1}{g^2} \sum_{n \leq\frac N2} \widehat{\Phi} \Big( \frac{n}{g} \Big)\sum_{\substack{P \in \mathcal{P}_{n/r}\\r\geq 1}}  \frac{d(P)}{|P|^{r}(|P|+1)}
\]
and
\[
c_4(\Phi,g)=\frac{1}{2g^2} \sum_{n \leq \frac N2} \widehat{\Phi} \Big( \frac{n}{g} \Big)\bigg( \sum_{\substack{P \in \mathcal{P}_{n/r}\\r\geq 1}}  \frac{d(P)}{|P|^{r}(|P|+1)}\bigg)^2.
\]
It follows that
\begin{equation*}
\bee=  \frac{1}{2g^2} \sum_{n \leq \frac N2} \widehat{\Phi} \Big( \frac{n}{g} \Big) +c_2(\Phi)+c_3(\Phi)+c_4(\Phi)+O(q^{-2g-2}/g).
\end{equation*}

\subsection{Evaluating $\beo$ and $\boe$}
We shall focus on $\beo$. Note that $\boe = \beo$. We have
\begin{align}
\beo= \frac{1}{2g^2 | \mathcal{H}_{2g+1}|} \sum_{\substack{ n \leq  N \\ n \text{ even}}} \widehat{\Phi} \Big( \frac{n}{2g} \Big) \sum_{\substack{P \in \mathcal{P}_{n/2r}\\r\geq1}} \sum_{\substack{Q \in \mathcal{P}_{n/(2s+1)}\\s\geq 0}} \frac{d(P) d(Q)}{|P|^r |Q|^{s+\frac{1}{2}}} \sum_{\substack{D \in \mathcal{H}_{2g+1} \\ (D,P)=1}} \chi_D(Q). \label{exp2}
\end{align}
Write 
$$\sum_{\substack{D \in \mathcal{H}_{2g+1} \\ (D,P)=1}} \chi_D(Q) = \sum_{D \in \mathcal{H}_{2g+1}} \chi_D(Q) - \chi_P(Q) \sum_{\substack{D \in \mathcal{H}_{2g+1-d(P)} \\ (D,P)=1}}  \chi_D(Q).$$ 
The first term above (the full sum over $D$) is bounded by $q|Q|^{\frac12}$, by using the Polya-Vinogradov inequality in function fields from Lemma \ref{pv}. We further express
$$  \sum_{\substack{D \in \mathcal{H}_{2g+1-d(P)} \\ (D,P)=1}}  \chi_{D}(Q) =  \sum_{D \in \mathcal{H}_{2g+1-d(P)}}  \chi_{D}(Q) - \chi_P(Q)  \sum_{\substack{D \in \mathcal{H}_{2g+1-2d(P)} \\ (D,P)=1}}  \chi_{D}(Q),$$ and again bound the full sum over $D$ by $q|Q|^{\frac12}$. We repeat this argument, and it follows that
$$ \sum_{\substack{D \in \mathcal{H}_{2g+1} \\ (D,P)=1}}  \chi_{D}(Q) \ll  \frac{gq}{d(P)}|Q|^{\frac12}.$$
We use this bound in \eqref{exp2} and use the Prime Polynomial Theorem twice, for the sums over $P$ and $Q$, and end up with
\begin{equation}
\beo \ll q^{N-2g}/gN. \label{beoest}
\end{equation}

\subsection{Evaluating $\boo$}\label{boosection}
We have
\begin{align*}
\boo &=  \frac{1}{2g^2 | \mathcal{H}_{2g+1}|}\sum_{n \leq N} \widehat{\Phi} \Big( \frac{n}{2g} \Big)\sum_{\substack{P \in \mathcal{P}_{n/(2r+1)}\\r\geq0}} \sum_{\substack{Q \in \mathcal{P}_{n/(2s+1)} \\ Q\ne P,\ s\geq 0}}  \frac{d(P) d(Q)}{ |P|^{r+\frac12} |Q|^{s+\frac12}} \sum_{D \in \mathcal{H}_{2g+1}} \chi_D(PQ). 
\end{align*}
By the third inequality in Lemma 3.5,
\[
\sum_{D \in \mathcal{H}_{2g+1}} \chi_D(PQ)\ll \frac{gq\big(d(P)+d(Q)\big)}{d(P)d(Q)}|P|^{\frac12}|Q|^{\frac12}.
\]
Combining with the Prime Polynomial Theorem, the contribution of the terms with $r,s\geq 1$ to $\boo$ is
\begin{equation}\label{boo101}
\ll g^{-1}q^{-2g}\sum_{n\leq N}\sum_{r,s\geq 1}q^{-\frac{(r-1)n}{2r+1}-\frac{(s-1)n}{2s+1}}\frac{r+s+1}{n}\ll q^{-2g}(\log N)/g,
\end{equation}
and that of the terms with $r=0$, $s\geq 1$ or $r\geq1$, $s=0$ is
\begin{equation}\label{boo102}
\ll q^{N-2g}/gN.
\end{equation}
For $N$ small, we also bound $\boo$ by
\begin{equation}\label{boo120}
\boo \ll q^{2N-2g}/gN.
\end{equation}

We now consider various different ranges of $n$ and $N$: $\frac{g}{k+1}\leq n\leq N\leq\min\{\frac{g-1}{k},2g\}$ for $k\in\mathbb{N}$. We denote, for $k\in\mathbb{N}$,
\begin{align*}
B_{2}^{\textrm{oo}}(\Phi,g;k) &=  \frac{1}{2g^2 | \mathcal{H}_{2g+1}|}\sum_{\frac {g}{k+1}\leq n \leq N\leq\min\{\frac{g-1}{k},2g\}} \widehat{\Phi} \Big( \frac{n}{2g} \Big)\\
&\qquad\qquad\qquad\sum_{\substack{P \in \mathcal{P}_{n/(2r+1)}\\r\geq0}} \sum_{\substack{Q \in \mathcal{P}_{n/(2s+1)} \\ Q\ne P,\ s\geq 0}}  \frac{d(P) d(Q)}{ |P|^{r+\frac12} |Q|^{s+\frac12}} \sum_{D \in \mathcal{H}_{2g+1}} \chi_D(PQ). 
\end{align*}

\subsubsection{The range $\frac{g}{k+1}\leq n\leq N\leq\min\{\frac{g-1}{k},2g\}$, $k\in\mathbb{N}
$}

Using \eqref{boo101} and \eqref{boo102} we get
\begin{align*}
B_{2}^{\textrm{oo}}(\Phi,g;k) &=  \frac{1}{2g^2 | \mathcal{H}_{2g+1}|}\sum_{\frac {g}{k+1}\leq n \leq N} \widehat{\Phi} \Big( \frac{n}{2g} \Big)\sum_{P\ne Q \in \mathcal{P}_{n}} \frac{d(P) d(Q)}{ |P|^{\frac12} |Q|^{\frac12}} \sum_{D \in \mathcal{H}_{2g+1}} \chi_D(PQ)\\
&\qquad\qquad\qquad +O\big((k+1)q^{N-2g}/g^2\big). 
\end{align*}
Applying Lemma \ref{5} (and using the fact that $d(PQ)$ is even),
\begin{align*}
 \sum_{D \in \mathcal{H}_{2g+1}} \chi_D(PQ) &= \sum_{C| (PQ)^{\infty}} \sum_{h \in \mathcal{M}_{2g+1-2d(C)}} \chi_{PQ}(h)  - q \sum_{C| (PQ)^{\infty}} \sum_{h \in \mathcal{M}_{2g-1-2d(C)}} \chi_{PQ}(h) \\
 &= \sum_{i,j=0}^{\infty} \sum_{h \in \mathcal{M}_{2g+1-2(i+j)n}} \chi_{PQ}(h) - q \sum_{i,j=0}^{\infty} \sum_{h \in \mathcal{M}_{2g-1-2(i+j)n}} \chi_{PQ}(h),
 \end{align*}
and then we can write
\[
B_{2}^{\textrm{oo}}(\Phi,g;k) =B_{21}^{\textrm{oo}}(\Phi,g;k)-qB_{22}^{\textrm{oo}}(\Phi,g;k)+O\big((k+1)q^{N-2g}/g^2\big),
\]
 where
\begin{align*}
B_{21}^{\textrm{oo}}(\Phi,g;k)=\frac{1}{2g^2 | \mathcal{H}_{2g+1}|}\sum_{\frac {g}{k+1}\leq n\leq N} \widehat{\Phi} \Big( \frac{n}{2g} \Big)\sum_{i,j=0}^{\infty}  \sum_{P\ne Q \in \mathcal{P}_{n}} \frac{d(P) d(Q)}{|P|^{\frac12}|Q|^{\frac12}} \sum_{h \in \mathcal{M}_{2g+1-2(i+j)n}} \chi_{PQ}(h)
 \end{align*}
 and $B_{22}^{\textrm{oo}}(\Phi,g;k)$ has a similar expression with the sum over $h\in \mathcal{M}_{2g+1-2(i+j)n}$ being replaced by $h\in\mathcal{M}_{2g-1-2(i+j)n}$.
Given $P$ and $Q$, the character sums $\sum_{h \in \mathcal{M}_{2g\pm1-2(i+j)n}} \chi_{PQ}(h)$ are nonzero only if 
\begin{equation}\label{ij}
0 \leq 2g\pm1-
2(i+j)n < 2n,
\end{equation} 
respectively. Hence, in this range, $i+j=k$ for both sums, or $i+j=k+1$ for the first character sum if $n=\frac{g}{k+1}$.

Next we use Lemma \ref{Poisson} for the sum over $h$, and together with Lemma \ref{computeg} and the fact that $\epsilon(q)^2=1$, we get that
\begin{align}\label{new100}
&B_{21}^{\textrm{oo}}(\Phi,g;k)= \frac{q}{2g^2(q-1)}  \sum_{\frac {g}{k+1}\leq n\leq N} \widehat{\Phi} \Big( \frac{n}{2g} \Big)\sum_{i,j}{\!}^{\flat}\  q^{-2(i+j)n}\sum_{P\ne Q \in \mathcal{P}_{n}} \frac{d(P) d(Q)}{|P||Q|} \\
&\qquad\qquad\qquad\qquad \Big( (q-1) \sum_{V \in \mathcal{M}_{\leq 2(i+j+1)n-2g-3}} \chi_{PQ}(V)- \sum_{V \in \mathcal{M}_{2(i+j+1)n-2g-2}} \chi_{PQ}(V) \Big), \nonumber
\end{align} 
where $\sum^\flat$ denotes $i+j=k$, or $i+j=k+1$ if $n=\frac{g}{k+1}$.
We then write
\[
B_{21}^{\textrm{oo}}(\Phi,g;k)=B_{21}^{\textrm{oo}}(k;V=\square)+B_{21}^{\textrm{oo}}(k;V\ne\square),
\]
where $B_{21}^{\textrm{oo}}(k;V=\square)$ corresponds to the sum over $V$ being a square polynomial and $B_{21}^{\textrm{oo}}(k;V\ne\square)$ corresponds to the sum over $V$ non-square. 

For $B_{21}^{\textrm{oo}}(k;V=\square)$, we note that $d(V)<2n$, and since $V=\square$, we have $(V,PQ)=1$. So the expression in the bracket in \eqref{new100} is
\begin{align*}
(q-1) \sum_{V \in \mathcal{M}_{\leq (i+j+1)n-g-2}} 1- \sum_{V \in \mathcal{M}_{(i+j+1)n-g-1}} 1=-1.
\end{align*}
We reintroduce the terms with $P=Q$ and obtain
\begin{align*}
B_{21}^{\textrm{oo}}(k;V=\square) & = - \frac{q}{2g^2(q-1)}  \sum_{\frac {g}{k+1}\leq n\leq N} \widehat{\Phi} \Big( \frac{n}{2g} \Big)\sum_{i,j}{\!}^{\flat}\  q^{-2(i+j)n} \sum_{P, Q \in \mathcal{P}_{n}} \frac{d(P) d(Q)}{|P||Q|}\\
&\qquad\qquad+\frac{q}{2g^2(q-1)}  \sum_{\frac {g}{k+1}\leq n\leq N} \widehat{\Phi} \Big( \frac{n}{2g} \Big)\sum_{i,j}{\!}^{\flat}\  q^{-2(i+j)n} \sum_{P \in \mathcal{P}_{n}} \frac{d(P)^2}{|P|^{2}}.
\end{align*}
In view of equation \eqref{pnt} this equals
\begin{align}\label{boo105}
&- \frac{q}{2g^2(q-1)}  \sum_{\frac {g}{k+1}\leq n\leq N} \widehat{\Phi} \Big( \frac{n}{2g} \Big)\sum_{i,j}{\!}^{\flat}\  q^{-2(i+j)n}\Big(1+O\big(q^{-\frac{n}{2}}\big)\Big)\nonumber\\
&\qquad\qquad=- \frac{(k+1)q}{2g^2(q-1)}  \sum_{\frac {g+1}{k+1}\leq n\leq N} \widehat{\Phi} \Big( \frac{n}{2g} \Big)q^{-2kn}+O\big((k+1)q^{\frac{3(g+1)}{2(k+1)}-2g-2}/g^2\big).
\end{align}

On the other hand, we have
\begin{align*}
B_{21}^{\textrm{oo}}&(k;V\ne\square) = \frac{q}{2g^2(q-1)}  \sum_{\frac{g}{k+1}\leq n\leq N} \widehat{\Phi} \Big( \frac{n}{2g} \Big)\sum_{i,j}{\!}^{\flat}\ q^{-2(i+j)n}\\
&\qquad\quad\qquad\qquad\bigg((q-1) \sum_{\substack{V \in \mathcal{M}_{\leq 2(i+j+1)n-2g-3} \\ V \neq \square}}  \sum_{P\ne Q \in \mathcal{P}_{n}} \frac{d(P) d(Q)}{|P||Q|} \chi_{PQ}(V) \\
&\qquad\qquad\qquad\qquad\qquad\qquad\qquad\quad- \sum_{\substack{V \in \mathcal{M}_{2(i+j+1)n-2g-2} \\ V \neq \square}}   \sum_{P\ne Q \in \mathcal{P}_{n}} \frac{d(P) d(Q)}{|P||Q|} \chi_{PQ}(V) \bigg).
\end{align*}
We add and then subtract the terms with $P=Q$. As for the complete expression over all $P$ and $Q$, we use the Weil bound in Lemma \ref{sumprimes} twice and then trivially bound the sum over $V$. In doing so we get
\begin{align*}
&B_{21}^{\textrm{oo}}(k;V\ne\square) =- \frac{q}{2g^2(q-1)}  \sum_{\frac{g}{k+1}\leq n\leq N} \widehat{\Phi} \Big( \frac{n}{2g} \Big)\sum_{i,j}{\!}^{\flat}\ q^{-2(i+j)n}\sum_{P \in \mathcal{P}_{n}} \frac{d(P)^2}{|P|^{2}}\\
&\qquad\qquad\qquad\bigg((q-1) \sum_{\substack{V \in \mathcal{M}_{\leq 2(i+j+1)n-2g-3} \\ V \neq \square,\ (V,P)=1}}1- \sum_{\substack{V \in \mathcal{M}_{2(i+j+1)n-2g-2} \\ V \neq \square,\ (V,P)=1}} 1\bigg)+O\big(q^{N-2g-2}/(k+1)\big).
\end{align*}
The expression in the above bracket is
\begin{align*}
&\bigg((q-1) \sum_{V \in \mathcal{M}_{\leq 2(i+j+1)n-2g-3}} 1- \sum_{V \in \mathcal{M}_{2(i+j+1)n-2g-2}} 1\bigg)\\
&\qquad\qquad-\bigg((q-1) \sum_{V \in \mathcal{M}_{\leq (2i+2j+1)n-2g-3}}  1- \sum_{V \in \mathcal{M}_{ (2i+2j+1)n-2g-2}} 1\bigg)\\
&\qquad\qquad\qquad\qquad-\bigg((q-1) \sum_{V \in \mathcal{M}_{\leq (i+j+1)n-g-2}} 1- \sum_{V \in \mathcal{M}_{(i+j+1)n-g-1}} 1\bigg)\ll 1.
\end{align*}
Hence, using the Prime Polynomial Theorem,
\[
B_{21}^{\textrm{oo}}(k;V\ne\square)\ll q^{\frac{g+1}{k+1}-2g-2}/g+q^{N-2g-2}/(k+1).
\]

Adding the estimate for $B_{21}^{\textrm{oo}}(k;V=\square)$ in \eqref{boo105} we get
\begin{align*}
B_{21}^{\textrm{oo}}(\Phi,g;k)=&- \frac{(k+1)q}{2g^2(q-1)}  \sum_{\frac {g+1}{k+1}\leq n\leq N} \widehat{\Phi} \Big( \frac{n}{2g} \Big)q^{-2kn}\\
&\qquad\qquad\qquad+O\big((k+1)q^{\frac{3(g+1)}{2(k+1)}-2g-2}/g^2\big)+O\big(q^{N-2g-2}/(k+1)\big).
\end{align*}

Similarly,
\begin{align*}
B_{22}^{\textrm{oo}}(\Phi,g;k)=&- \frac{k+1}{2g^2q(q-1)}  \sum_{\frac {g}{k+1}\leq n\leq N} \widehat{\Phi} \Big( \frac{n}{2g} \Big)q^{-2kn}\\
&\qquad\qquad\qquad+O\big((k+1)q^{\frac{3g}{2(k+1)}-2g-2}/g^2\big)+O\big(q^{N-2g-2}/(k+1)\big).
\end{align*}
Thus,
\begin{align*}
B_{2}^{\textrm{oo}}(\Phi,g;k)=&\frac{\mathds{1}_{(k+1)|g}(k+1)}{2g^2(q-1)}  \widehat{\Phi} \Big( \frac{1}{2k+2} \Big)q^{-\frac{2kg}{k+1}}- \frac{k+1}{2g^2}  \sum_{\frac {g+1}{k+1}\leq n\leq N} \widehat{\Phi} \Big( \frac{n}{2g} \Big)q^{-2kn}\\
&\qquad\quad+O\big((k+1)q^{\frac{3(g+1)}{2(k+1)}-2g-2}/g^2\big)+O\big((k+1)q^{\frac{3g}{2(k+1)}-2g-1}/g^2\big)\\
&\qquad\qquad\qquad\qquad+O\big(q^{N-2g-1}/(k+1)\big)+O\big((k+1)q^{N-2g}/g^2\big).
\end{align*}

\subsubsection{Combining the ranges} If $\frac{g}{K+1}\leq N\leq\frac{g-1}{K}$ for some $K\geq1$, then combining the estimates for $B_{2}^{\textrm{oo}}(\Phi,g;k)$ for all $k\geq K$ we have
\begin{align*}
B_{2}^{\textrm{oo}}(\Phi,g)=&\sum_{k\geq K}\frac{\mathds{1}_{(k+1)|g}(k+1)}{2g^2(q-1)}  \widehat{\Phi} \Big( \frac{1}{2k+2} \Big)q^{-\frac{2kg}{k+1}}- \sum_{k\geq K}\frac{k+1}{2g^2}  \sum_{\frac {g+1}{k+1}\leq n\leq N} \widehat{\Phi} \Big( \frac{n}{2g} \Big)q^{-2kn}\\
&\qquad\qquad+O\big((K+1)q^{\frac{3g}{2(K+1)}-2g-1}/g^2\big)+O\big((\log g)q^{N-2g-1} \big)+O\big(q^{N-2g}\big).
\end{align*}
We note from \eqref{boo120} that we can also truncate the above sums over $k$ at $k\leq K'$ and replace the last two error terms by $O\big((\log K')q^{N-2g-1} \big)+O\big(K'q^{\frac{2(g-1)}{K'+1}-2g}/g^2\big)$.

When $g\leq N<2g$ (which corresponds to $K=0$),
\begin{align*}
B_{2}^{\textrm{oo}}(\Phi,g)=&\frac{ \widehat{\Phi} ( \frac{1}{2})}{2g^2(q-1)} - \frac{1}{2g^2}  \sum_{g+1\leq n\leq N} \widehat{\Phi} \Big( \frac{n}{2g} \Big)\\
&\qquad\qquad+O\big(q^{-\frac g2-\frac12}/g^2\big)+O\big(q^{N-2g-1}\big)+O\big(q^{N-2g}/g^2\big).
\end{align*}

\section{Proof of Corollary \ref{nonvanishing}}
\label{nonvan}
We shall give a sketch of the proof of Corollary 2.1, and refer interested readers to the work of Iwaniec, Luo and Sarnak \cite{ILS}, Appendix A, for a thorough discussion on the problem of obtaining the nonvanishing proportion of $L$-functions at the central point from the $1$-level density.

Theorem \ref{1level} yields
\begin{equation}\label{density}
\frac{1}{|\mathcal{H}_{2g+1}|}\sum_{D\in\mathcal{H}_{2g+1}}\sum_{j=1}^{2g}\Phi(2g \theta_j)=\int_{-\infty}^{\infty}\widehat{\Phi}(y)\widehat{W}_{Sp(2g)}(y)dy+o(1)
\end{equation}
as $g\rightarrow\infty$ for any fixed, even $\Phi$ with the support of $\widehat{\Phi}$ in $(-2,2)$, where
\[
\widehat{W}_{Sp(2g)}(y)=\delta_0(y)-\tfrac{1}{2}\eta(y)
\]
with $\eta(y)$ being the characteristic function of the interval $[-1,1]$. Note that $\frac{1}{2}\eta(y)$ is the Fourier transform of $\frac{\sin2\pi x}{2\pi x}$.

Let
\[
p_m(g)=\frac{1}{|\mathcal{H}_{2g+1}|}\Big|\big\{D\in\mathcal{H}_{2g+1}:\ \textrm{ord}_{s=\frac{1}{2}}L(\tfrac{1}{2},\chi_D)=m\big\}\Big|.
\]
The functional equation implies that the order of zero of $L(s,\chi_D)$ at $s=\frac12$ is always even, so $p_{2m+1}(g)=0$. Also, it is clear that
\begin{equation}\label{cor1}
\sum_{m=0}^{\infty}p_m(g)=1.
\end{equation}
By choosing the test function $\Phi(x)$ such that $\Phi(x)\geq0$, $\Phi(0)=1$ and the support of $\widehat{\Phi}$ is in $(-2,2)$, we derive from \eqref{density} that
\[
\sum_{m=1}^{\infty}mp_m(g)\leq \int_{-\infty}^{\infty}\widehat{\Phi}(y)\widehat{W}_{Sp(2g)}(y)dy+o(1)
\]
as $g\rightarrow\infty$. Combining with \eqref{cor1} and the fact that $p_{2m+1}(g)=0$ we get
\begin{equation}\label{cor2}
p_0(g)\geq 1-\frac{1}{2}\int_{-\infty}^{\infty}\widehat{\Phi}(y)\widehat{W}_{Sp(2g)}(y)dy+o(1)
\end{equation}
as $g\rightarrow\infty$.

The Fourier pair
\[
\Phi(x)=\Big(\frac{\sin2\pi x}{2\pi x}\Big)^2,\quad \widehat{\Phi}(y)=\frac{1}{2}\Big(1-\frac{|y|}{2}\Big)\quad\textrm{if} \ |y|<2
\]
leads to quite a good result. In this case we have $\widehat{\Phi}(0)=\frac12$,
\[
\int_{-\infty}^{\infty}\widehat{\Phi}(y)\widehat{W}_{Sp(2g)}(y)dy=\frac{1}{2}-\frac{1}{4}\int_{-1}^{1}\Big(1-\frac{|y|}{2}\Big)dy=\frac{1}{8},
\]
and hence \eqref{cor2} gives $p_0(g)\geq \frac{15}{16}+o(1)=0.9375+o(1)$. This is the proportion that \"Ozl\"uk and Snyder \cite{ozluksnyder} obtained for the nonvanishing of quadratic Dirichlet $L$-functions at the central point.

We can obtain a slightly better result by solving the involving optimisation problem. The problem is to determine 
\[
\inf_{\Phi}\int_{-\infty}^{\infty}\widehat{\Phi}(y)\widehat{W}_{Sp(2g)}(y)dy,
\]
where the infimum is taken over all $\Phi\in\textrm{L}^1(\mathbb{R})$ such that $\Phi(x)\geq 0$, $\Phi(0)=1$ and the support of $\widehat{\Phi}$ is in $(-2,2)$.

The admissible functions $\Phi$ have the form $\Phi(x)=|f(x)|^2$, where $f\in \textrm{L}^2(\mathbb{R})$ is an entire function of exponential type $1$. Then
\[
\widehat{\Phi}(y)=(h\star\breve{h})(y),
\]
where
\[
\breve{h}(y)=\overline{h(-y)},\quad\textrm{support}(h)\subset[-1,1],\quad h\in\textrm{L}^2[-1,1].
\]
Under this presentation, the condition $\Phi(0)=1$ becomes $|\langle h,1\rangle|=1$. We define the self-adjoint operator $\textrm{K}_1:\textrm{L}^2[-1,1]\rightarrow\textrm{L}^2[-1,1]$ by
\[
\textrm{K}_1h(y)=(\tfrac{1}{2}\eta\star h)(y).
\]
Hence our problem is equivalent to minimising the quadratic form $$\langle (\textrm{I}+\textrm{K}_1)h,h\rangle$$ subject to the linear constraint $|\langle h,1\rangle|=1$.

As in Proposition A.1 \cite{ILS},
\[
\inf_{\substack{h\in\textrm{L}^2[-1,1]\\|\langle h,1\rangle|=1}}\langle (\textrm{I}+\textrm{K}_1)h,h\rangle=\frac{1}{\langle 1,h_0\rangle},
\]
where $h_0$ satisfies
\begin{equation}\label{cor3}
(\textrm{I}+\textrm{K}_1)h_0=1.
\end{equation}
In our case, \eqref{cor3} becomes
\[
1=h_0(y)+\int_{-1}^{1}\tfrac{1}{2}\eta(y-x)h_0(x)dx=h_0(y)-\frac{1}{2}\int_{0}^{1}h_0(x)dx-\frac{1}{2}\int_{0}^{1-y}h_0(x)dx
\]
for $0\leq y\leq 1$, and $h_0$ is an even function. Trying trigonometric functions we find that the even extension to $[-1,1]$ of 
\[
h_0(y)=\frac{\sin(\frac{y}{2}-\frac{\pi+1}{4})}{\sqrt{2}\sin\frac{1}{4}-\cos\frac{\pi+1}{4}}, \qquad 0\leq y\leq 1,
\]
solves the above equation. So,
\[
\inf_{\Phi}\int_{-\infty}^{\infty}\widehat{\Phi}(y)\widehat{W}_{Sp(2g)}(y)dy=\frac{1}{\langle 1,h_0\rangle}=\frac{\cot\frac{1}{4}-3}{8}.
\]
Hence \eqref{cor2} leads to
\[
p_0(g)\geq \frac{19-\cot(\frac{1}{4})}{16}+o(1)=0.9427\ldots+o(1)
\]
as $g\rightarrow\infty$, which finishes the proof of the corollary.

\section{Proof of Corollary \ref{sz}}
\label{simplezeros}
From Theorem \ref{paircorth} we get
\begin{equation}\label{correlation}
\frac{1}{2g|\mathcal{H}_{2g+1}|}\sum_{D\in\mathcal{H}_{2g+1}}\sum_{1\leq j, k\leq2g}\Phi\big(2g(\theta_{j,D}-\theta_{k,D})\big)=\int_{-\infty}^{\infty}\widehat{\Phi}(y)\big(\delta_0(y)+\eta(y)y\big)dy+o(1)
\end{equation}
as $g\rightarrow\infty$ for any fixed, even $\Phi$ with the support of $\widehat{\Phi}$ in $(-1,1)$. Here, as in the previous section, $\eta(y)$ is the characteristic function of the interval $[-1,1]$.

For $D\in\mathcal{H}_{2g+1}$, let $m_{j,D}$ be the multiplicity of the zero $q^{-\frac12}e^{2\pi i\theta_{j,D}}$ of $\mathcal{L}(u,\chi_D)$. Clearly,
\begin{equation}\label{cor2.1}
\sum_{\theta_{j,D}}{\!}^{*}\ m_{j,D}=2g,
\end{equation}
where $\sum^{*}$ denotes the summation over the distinct zeros among $\{\theta_{j,D}\}_{j=1}^{2g}$. Also, by choosing the test function $\Phi(x)$ such that $\Phi(x)\geq0$, $\Phi(0)=1$ and the support of $\widehat{\Phi}$ is in $(-1,1)$, we have
\[
\sum_{1\leq j, k\leq2g}\Phi\big(2g(\theta_{j,D}-\theta_{k,D})\big)\geq \sum_{\theta_{j,D}}{\!}^{*}\ m_{j,D}^2\geq 2\sum_{\theta_{j,D}}{\!}^{*}\ m_{j,D}-\sum_{\substack{\theta_{j,D}\\m_{j,D}=1}}{\!\!\!\!}^{*}\ \ 1.
\]
Combining with \eqref{correlation} and \eqref{cor2.1} we obtain
\begin{equation}\label{cor2.2}
\Bigg\langle \frac{1}{2g}\sum_{\substack{1\leq j\leq 2g\\ \theta_{j,D}\ \textrm{simple}}}1\Bigg\rangle_{\mathcal{H}_{2g+1}}\geq 2-\int_{-\infty}^{\infty}\widehat{\Phi}(y)\big(\delta_0(y)+\eta(y)y\big)dy+o(1)
\end{equation}
as $g\rightarrow\infty$. The problem is then reduced to determine 
\[
\inf_{\Phi}\int_{-\infty}^{\infty}\widehat{\Phi}(y)\big(\delta_0(y)+\eta(y)y\big)dy
\]
over all $\Phi\in\textrm{L}^1(\mathbb{R})$ such that $\Phi(x)\geq 0$, $\Phi(0)=1$ and the support of $\widehat{\Phi}$ is in $(-1,1)$.

The extremal problem we have is exactly like the one in the case of the Riemann zeta-function. This has been solved by Montgomery \cite{montgomery}, and that leads to
\[
\Bigg\langle \frac{1}{2g}\sum_{\substack{1\leq j\leq 2g\\ \theta_{j,D}\ \textrm{simple}}}1\Bigg\rangle_{\mathcal{H}_{2g+1}}\geq \frac{3}{2}-\frac{\cot(\frac{1}{\sqrt{2}})}{\sqrt{2}}+o(1)=0.6725\ldots+o(1)
\]
as $g\rightarrow\infty$.

Note that the usual Fourier pair
\[
\Phi(x)=\Big(\frac{\sin\pi x}{\pi x}\Big)^2,\quad \widehat{\Phi}(y)=1-|y|\quad\textrm{if} \ |y|<1,
\]
leads to the proportion of $\frac 23$.

\section{Appendix: Ratios Conjecture Calculations}
\label{app}

The calculations in this section are deeply heuristic. Interested readers unfamiliar with the ``recipe'' that leads to the following Conjecture \ref{RCF} are referred to, for example, \cite{conreysnaith}; Sections 2 and 3, for details.

\subsection{Ratios conjecture for $L$-functions in the hyperelliptic ensemble}

We would like to study
\[
R_g(\alpha;\beta):=\frac{1}{|\mathcal{H}_{2g+1}|}\sum_{D\in\mathcal{H}_{2g+1}}\frac{L(\frac{1}{2}+\alpha,\chi_D)}{L(\frac{1}{2}+\beta,\chi_D)}
\]
using the recipe in \cite{conreysnaith}. The shifts are assumed to satisfy the following conditions:
\begin{eqnarray*}
\big|\textrm{Re}(\alpha)\big| < \frac{1}{4},\quad q^{-(2g+1)}\ll \textrm{Re}(\beta)< \frac{1}{4}\quad\textrm{and}\quad
\textrm{Im}(\alpha), \textrm{Im}(\beta)\ll_\varepsilon q^{2g(g+1)(1-\varepsilon)}.\nonumber
\end{eqnarray*}
We use the approximate functional equation for the $L$-function in the numerator,
\[
L(\tfrac{1}{2}+\alpha,\chi_D)=\sum_{f\in\mathcal{M}_{\leq g}}\frac{\chi_D(f)}{|f|^{\frac12+\alpha}}+\mathcal{X}_D(\tfrac{1}{2}+\alpha)\sum_{f\in\mathcal{M}_{\leq g-1}}\frac{\chi_D(f)}{|f|^{\frac12-\alpha}},
\]
where
\[
\mathcal{X}_D(s)=\Big(\frac{|D|}{q}\Big)^{\frac12-s}=q^{2g(\frac12-s)},
\]
and the normal Dirichlet series expansion for that in the denominator,
\[
L(\tfrac{1}{2}+\beta,\chi_D)^{-1}=\sum_{h\in\mathcal{M}}\frac{\mu(h)}{|h|^{\frac12+\beta}}.
\]
The terms from the first part of the approximate functional equation contribute
\[
\frac{1}{|\mathcal{H}_{2g+1}|}\sum_{f,h}\frac{\mu(h)}{|f|^{\frac12+\alpha}|h|^{\frac12+\beta}}\sum_{D\in\mathcal{H}_{2g+1}}\chi_D(fh).
\]
We only retain the terms where $fh=\square$. Then from Lemma 3.7 the above expression is approximated by
\[
\sum_{fh=\square}\frac{\mu(h)a(fh)}{|f|^{\frac12+\alpha}|h|^{\frac12+\beta}},
\]
where
\[
a(fh)=\prod_{\substack{P\in\mathcal{P}\\P|fh}}\bigg(1+\frac{1}{|P|}\bigg)^{-1}.
\]
Using the Euler product we obtain (to save variables we now write $P^f$ and $P^h$ for $f$ and $h$, respectively)
\begin{align*}
&\prod_{P\in\mathcal{P}}\sum_{\substack{f,h\\f+h\ \textrm{even}}}\frac{\mu(P^h)a(P^{f+h})}{|P|^{(\frac12+\alpha)f+(\frac12+\beta)h}}\\
&\qquad\qquad=\prod_{P\in\mathcal{P}}\bigg(\sum_{f\ \textrm{even}}\frac{a(P^{f})}{|P|^{(\frac12+\alpha)f}}-\frac{|P|}{|P|+1}\sum_{f\ \textrm{odd}}\frac{1}{|P|^{(\frac12+\alpha)f+(\frac12+\beta)}}\bigg)\\
&\qquad\qquad=\zeta_q(1+2\alpha)\prod_{P\in\mathcal{P}}\bigg(1-\frac{1}{|P|^{1+2\alpha}}+\frac{|P|}{|P|+1}\frac{1}{|P|^{1+2\alpha}}-\frac{|P|}{|P|+1}\frac{1}{|P|^{1+\alpha+\beta}}\bigg)\\
&\qquad\qquad=\frac{\zeta_q(1+2\alpha)}{\zeta_q(1+\alpha+\beta)}A_g(\alpha;\beta),
\end{align*}
where
\begin{equation}\label{ARC}
A_g(\alpha;\beta)=\prod_{P\in\mathcal{P}}\bigg(1-\frac{1}{|P|^{1+\alpha+\beta}}\bigg)^{-1}\bigg(1-\frac{1}{|P|^{1+2\alpha}(|P|+1)}-\frac{1}{|P|^{\alpha+\beta}(|P|+1)}\bigg).
\end{equation}

The contribution of the terms coming from the second part of the approximate functional equation can be determined by using the functional equation
\[
L(\tfrac{1}{2}+\alpha,\chi_D)=q^{-2g\alpha }L(\tfrac{1}{2}-\alpha,\chi_D).
\]
Thus the recipe leads to the following ratios conjecture.

\begin{conjecture}\label{RCF}
We have
\begin{align*}
R(\alpha;\beta)=\frac{\zeta_q(1+2\alpha)}{\zeta_q(1+\alpha+\beta)}A_g(\alpha;\beta)+q^{-2g\alpha }\frac{\zeta_q(1-2\alpha)}{\zeta_q(1-\alpha+\beta)}A_g(-\alpha;\beta)+O_\varepsilon\big(q^{-g-\frac12+\varepsilon g}\big),
\end{align*}
where $A_g(\alpha;\beta)$ is defined as in \eqref{ARC}.
\end{conjecture}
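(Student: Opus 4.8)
The plan is to complete the ratios ``recipe'' begun in the display preceding the statement. Since the whole appendix is heuristic, the objective is not a rigorous estimate but to show that the recipe assembles into the two-term shape claimed, with everything discarded along the way consistent with the stated error. The lead-up already furnishes the first main term: expanding $L(\tfrac12+\alpha,\chi_D)$ by its approximate functional equation and $L(\tfrac12+\beta,\chi_D)^{-1}$ by its M\"obius--Dirichlet series, averaging over $\mathcal{H}_{2g+1}$, and retaining only the diagonal terms $fh=\square$ (so that Lemma \ref{lm4} supplies the arithmetic factor $a(fh)$), the principal part of the approximate functional equation contributes $\frac{\zeta_q(1+2\alpha)}{\zeta_q(1+\alpha+\beta)}A_g(\alpha;\beta)$ after the Euler-product rearrangement, with $A_g$ as in \eqref{ARC}.

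The remaining task is the dual contribution, and the cleanest route is to invoke the exact functional equation $L(\tfrac12+\alpha,\chi_D)=q^{-2g\alpha}L(\tfrac12-\alpha,\chi_D)$ rather than to analyse the truncated sum over $\mathcal{M}_{\leq g-1}$ directly. This identifies the term carrying $\mathcal{X}_D(\tfrac12+\alpha)=q^{-2g\alpha}$ with $q^{-2g\alpha}$ times the principal part of the approximate functional equation for $L(\tfrac12-\alpha,\chi_D)$, so the identical chain of steps --- diagonal selection, application of Lemma \ref{lm4}, and the Euler-product manipulation --- runs verbatim with $\alpha\mapsto-\alpha$ and produces $q^{-2g\alpha}\frac{\zeta_q(1-2\alpha)}{\zeta_q(1-\alpha+\beta)}A_g(-\alpha;\beta)$. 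Adding the two blocks gives the two main terms; the absolute convergence of the Euler product for $A_g(\pm\alpha;\beta)$ in the prescribed strip $|\mathrm{Re}(\alpha)|<\tfrac14$, $\mathrm{Re}(\beta)<\tfrac14$ guarantees these factors are well defined there.

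The delicate and genuinely non-rigorous step is the error term $O_\varepsilon(q^{-g-\frac12+\varepsilon g})$, which must absorb every piece the recipe throws away: the off-diagonal contributions $fh\neq\square$ to the average of $\chi_D(fh)$, the secondary $O(q^{-2g-2})$ in Lemma \ref{lm4}, and the cross terms between the principal and dual parts of the approximate functional equation. The governing philosophy of the recipe is that all of these are of square-root-of-conductor size, i.e.\ $q^{-g-\frac12+\varepsilon g}$ since the conductor of $\chi_D$ is $q^{2g+1}$, and I would assert this on the strength of the recipe rather than establish it. The main obstacle is thus not any particular computation but the fact that nothing after the diagonal extraction can be made rigorous: the size of the error is an article of faith dictated by the ratios philosophy, and the content of the statement is that, granting that faith, the arithmetic organises precisely into $\frac{\zeta_q(1+2\alpha)}{\zeta_q(1+\alpha+\beta)}A_g(\alpha;\beta)+q^{-2g\alpha}\frac{\zeta_q(1-2\alpha)}{\zeta_q(1-\alpha+\beta)}A_g(-\alpha;\beta)$.
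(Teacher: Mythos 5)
Your derivation follows the paper's recipe essentially verbatim: the principal part of the approximate functional equation plus the M\"obius expansion of the denominator, diagonal extraction $fh=\square$ evaluated through Lemma \ref{lm4}, the Euler-product rearrangement giving $\frac{\zeta_q(1+2\alpha)}{\zeta_q(1+\alpha+\beta)}A_g(\alpha;\beta)$, and the dual term obtained from the functional equation $L(\tfrac12+\alpha,\chi_D)=q^{-2g\alpha}L(\tfrac12-\alpha,\chi_D)$ by repeating the computation with $\alpha\mapsto-\alpha$, with the square-root-of-conductor error asserted as part of the recipe. This matches the paper's treatment in both structure and substance, so there is nothing to correct.
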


We now take the derivative of the expression in Conjecture \ref{RCF} with respect to $\alpha$ and set $\alpha=\beta=r$. Note that $A_g(r;r)=1$,
\begin{equation}\label{A'}
A'_g(r;r)=\frac{\partial}{\partial \alpha}A_g(\alpha;\beta)\Big|_{\alpha=\beta=r}=\sum_{P\in\mathcal{P}}\frac{\log |P|}{(|P|^{1+2r}-1)(|P|+1)}
\end{equation}
and
\[
\frac{\partial}{\partial \alpha}q^{-2g\alpha}\frac{\zeta_q(1-2\alpha)}{\zeta_q(1-\alpha+\beta)}A_g(-\alpha;\beta)\Big|_{\alpha=\beta=r}=-(\log q)q^{-2gr }\zeta_q(1-2r)A_g(-r;r).
\]
Hence Conjecture \ref{RCF} leads to

\begin{theorem}\label{RCd}
Assuming that Conjecture \ref{RCF} holds, then we have
\begin{align*}
\frac{1}{|\mathcal{H}_{2g+1}|}\sum_{D\in\mathcal{H}_{2g+1}}\frac{L'(\frac{1}{2}+r,\chi_D)}{L(\frac{1}{2}+r,\chi_D)}=&\frac{\zeta'_q(1+2r)}{\zeta_q(1+2r)}+A'_g(r;r)-(\log q)q^{-2gr}\zeta_q(1-2r)A_g(-r;r)\\
&\qquad\qquad+O_\varepsilon\big(q^{-g-\frac12+\varepsilon g}\big),
\end{align*}
where $A_g(\alpha;\gamma)$ is defined as in \eqref{ARC}, and $A'_g(r;r)$ is defined as in \eqref{A'}.
\end{theorem}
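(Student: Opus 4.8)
The plan is to recover the averaged logarithmic derivative as the value of $\partial_\alpha R(\alpha;\beta)$ on the diagonal. For each fixed $D$ one has
\[
\frac{\partial}{\partial\alpha}\frac{L(\frac12+\alpha,\chi_D)}{L(\frac12+\beta,\chi_D)}\bigg|_{\alpha=\beta=r}=\frac{L'(\frac12+r,\chi_D)}{L(\frac12+r,\chi_D)},
\]
so averaging over $D\in\mathcal{H}_{2g+1}$ gives
\[
\frac{\partial}{\partial\alpha}R(\alpha;\beta)\bigg|_{\alpha=\beta=r}=\frac{1}{|\mathcal{H}_{2g+1}|}\sum_{D\in\mathcal{H}_{2g+1}}\frac{L'(\frac12+r,\chi_D)}{L(\frac12+r,\chi_D)}.
\]
It therefore suffices to differentiate the right-hand side of Conjecture \ref{RCF} with respect to $\alpha$ and then set $\alpha=\beta=r$, term by term.

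First I would differentiate the first main term $\zeta_q(1+2\alpha)\,\zeta_q(1+\alpha+\beta)^{-1}A_g(\alpha;\beta)$ by the product rule. Using $A_g(r;r)=1$ and $\zeta_q(1+\alpha+\beta)\big|_{\alpha=\beta=r}=\zeta_q(1+2r)$, the three pieces produced are $2\zeta_q'(1+2r)/\zeta_q(1+2r)$, $-\zeta_q'(1+2r)/\zeta_q(1+2r)$ and $A_g'(r;r)$; the first two combine to $\zeta_q'(1+2r)/\zeta_q(1+2r)$, yielding exactly the first two terms of the theorem, with $A_g'(r;r)$ as in \eqref{A'}.

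The delicate piece is the second main term, and this is the step I expect to require the most care. The key structural fact is that $\zeta_q$ has a simple pole at $1$; equivalently $\zeta_q(s)^{-1}=1-q^{1-s}$, so the factor $\zeta_q(1-\alpha+\beta)^{-1}=1-q^{\alpha-\beta}$ \emph{vanishes} on the diagonal $\alpha=\beta$. Consequently every term in the product-rule expansion of $q^{-2g\alpha}\zeta_q(1-2\alpha)\zeta_q(1-\alpha+\beta)^{-1}A_g(-\alpha;\beta)$ that retains this factor dies at $\alpha=\beta=r$, and only the single term in which $\zeta_q(1-\alpha+\beta)^{-1}$ itself is differentiated survives. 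Since $\partial_\alpha(1-q^{\alpha-\beta})=-q^{\alpha-\beta}\log q$ equals $-\log q$ on the diagonal, the surviving contribution is $-(\log q)\,q^{-2gr}\zeta_q(1-2r)A_g(-r;r)$, matching the third term of the theorem. This cancellation, which replaces what would naively be several singular-looking pieces by a single clean one, is special to the function-field setting where $\zeta_q^{-1}$ is elementary, and getting it right is the main point.

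Finally I would check that differentiation preserves the error. The ratio $R(\alpha;\beta)$ is holomorphic in $\alpha$ throughout the admissible region, and Conjecture \ref{RCF} bounds its error uniformly by $O_\varepsilon\big(q^{-g-\frac12+\varepsilon g}\big)$ there; the generous ranges imposed on the shifts are precisely what allow one to represent $\partial_\alpha R$ by a Cauchy integral over a small contour encircling the diagonal point and lying inside the region. By Cauchy's estimate the derivative of the error is bounded by the supremum of the error over that contour divided by its radius, which is again $O_\varepsilon\big(q^{-g-\frac12+\varepsilon g}\big)$ after adjusting $\varepsilon$. Assembling the three differentiated main terms with this error bound gives the statement of the theorem.
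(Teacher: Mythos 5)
Your proposal is correct and takes essentially the same route as the paper: differentiate the conjectured formula for $R(\alpha;\beta)$ in $\alpha$, set $\alpha=\beta=r$, use $A_g(r;r)=1$ to collapse the first main term to $\frac{\zeta_q'(1+2r)}{\zeta_q(1+2r)}+A_g'(r;r)$, and exploit the fact that $\zeta_q(1-\alpha+\beta)^{-1}=1-q^{\alpha-\beta}$ vanishes on the diagonal so that only the term in which this factor is differentiated survives, giving $-(\log q)q^{-2gr}\zeta_q(1-2r)A_g(-r;r)$. Your Cauchy-estimate justification for why the $O_\varepsilon\big(q^{-g-\frac12+\varepsilon g}\big)$ error survives differentiation is an extra step of rigor that the paper (whose appendix is avowedly heuristic) simply omits.
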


Recall that $\Phi(2g\theta)=\phi(\theta)$ and $\phi(\theta)=\sum_{|n|\leq N}\widehat{\phi}(n)e(n\theta)$ is a real and even trigonometric polynomial. We would like to use Conjecture \ref{RCF} to compute the $1$-level density
\[
\Big\langle \Sigma_1(\Phi,g,D)\Big\rangle_{\mathcal{H}_{2g+1}}=\frac{1}{|\mathcal{H}_{2g+1}|}\sum_{D\in\mathcal{H}_{2g+1}}\sum_{j=1}^{2g}\Phi(2g \theta_j).
\]

It is easier to work in the $s$-world. So we shall first transform our $1$-level density problem from the $u$-world to that. Note that $\mathcal{L}(u,\chi_D)$ have $2g$ zeros at $u=q^{-\frac12}e^{2\pi i\theta_j}$. Hence {\it vertically}, $L(s,\chi_D)$ have zeros at $\frac12+i\gamma$ with periodicity $\frac{2\pi}{\log q}$, where
\[
\gamma = \frac{2\pi \theta_j}{\log q},\qquad 1\leq j\leq 2g.
\]
Thus,
\[
\sum_{j=1}^{2g}\Phi(2g \theta_j)=\sum_{0\leq \gamma<\frac{2\pi}{\log q}}\Phi\Big(\gamma\frac{g\log q}{\pi}\Big).
\]

By Cauchy's theorem, up to a negligible error term, we have
\begin{align*}
\frac{1}{|\mathcal{H}_{2g+1}|}\sum_{D\in\mathcal{H}_{2g+1}}\sum_{j=1}^{2g}\Phi(2g \theta_j)=&\frac{1}{|\mathcal{H}_{2g+1}|}\sum_{D\in\mathcal{H}_{2g+1}}\\
&\qquad\frac{1}{2\pi i}\bigg(\int_{\mathcal{C}_1}-\int_{\mathcal{C}_2}\bigg)\frac{L'(s,\chi_D)}{L(s,\chi_D)}\Phi\Big(-i(s-\tfrac{1}{2})\frac{g\log q}{\pi}\Big)ds,
\end{align*}
where $\mathcal{C}_1$ and $\mathcal{C}_2$ are the segments from $\frac12\pm\varepsilon$ to $\frac12\pm\varepsilon+\frac{2\pi i}{\log q}$, respectively.

The integral along $\mathcal{C}_1$ is
\[
\frac{1}{|\mathcal{H}_{2g+1}|}\frac{1}{2\pi}\int_{0}^{\frac{2\pi}{\log q}}\Phi\Big((t-i\varepsilon)\frac{g\log q}{\pi}\Big)\sum_{D\in\mathcal{H}_{2g+1}}\frac{L'(\frac{1}{2}+\varepsilon+it,\chi_D)}{L(\frac{1}{2}+\varepsilon+it,\chi_D)}dt.
\]
By Conjecture \ref{RCd}, the sum over $D$ is
\begin{align*}
\bigg(&\frac{\zeta'_q(1+2r)}{\zeta_q(1+2r)}+A'_g(r;r)-(\log q)q^{-2gr}\zeta_q(1-2r)A_g(-r;r)\bigg)\bigg|_{r=\varepsilon+it}+O_\varepsilon\big(q^{-g-\frac12+\varepsilon g}\big).
\end{align*}
Since the integrand is regular at $r=0$, we can move the line of integration to $\varepsilon=0$ and obtain
\begin{align*}
\frac{1}{2\pi}\int_{0}^{\frac{2\pi}{\log q}}\Phi&\Big(\frac{tg\log q}{\pi}\Big)\bigg(\frac{\zeta'_q(1+2it)}{\zeta_q(1+2it)}+A'_g(it;it)-(\log q)q^{-2itg}\zeta_q(1-2it)A_g(-it;it)\bigg)dt\\
&+O_\varepsilon\big(q^{(-g-\frac12+\varepsilon g}\big).
\end{align*}

For the integral along $\mathcal{C}_2$, we change variables, letting $s\rightarrow 1-s$, and use the functional equation
\[
-\frac{L'(1-s,\chi_D)}{L(1-s,\chi_D)}=-2g\log q-\frac{L'(s,\chi_D)}{L(s,\chi_D)},
\]
The contribution of the term $L'/L$ is exactly as before as $\Phi$ is even. Hence we have
\begin{align*}
\frac{1}{|\mathcal{H}_{2g+1}|}\sum_{D\in\mathcal{H}_{2g+1}}\sum_{j=1}^{2g}\Phi(2g \theta_j)&=\frac{1}{\pi}\int_{0}^{\frac{2\pi}{\log q}}\Phi\Big(\frac{tg\log q}{\pi}\Big)\bigg(g\log q+\frac{\zeta'_q(1+2it)}{\zeta_q(1+2it)}+A'_g(it;it)\\
&\qquad-(\log q)q^{-2itg}\zeta_q(1-2it)A_g(-it;it)\bigg)dt+O_\varepsilon\big(q^{-g-\frac12+\varepsilon g}\big)\\
&=A_1+A_2+A_3+A_4+O_\varepsilon\big(q^{-g-\frac12+\varepsilon g}\big),
\end{align*}
say.

Note that $\widehat{\Phi}(\frac{n}{2g})=2g\int_{0}^{1}\Phi(2g\theta)e(-n\theta)d\theta$. After a change of variables we get
\[
\widehat{\Phi}\Big(\frac{n}{2g}\Big)=\frac{g\log q}{\pi}\int_{0}^{\frac{2\pi}{\log q}}\Phi\Big(\frac{tg\log q}{\pi}\Big)q^{-itn}dt.
\]
Hence $A_1=\widehat{\Phi}(0)$ and
\begin{align*}
A_2=-\frac{1}{g}\sum_{n\leq \frac N2}\widehat{\Phi}\Big(\frac{n}{g}\Big),
\end{align*}
as
\[
\frac{\zeta'_q(1+2it)}{\zeta_q(1+2it)}=-\frac{q^{-2it}\log q}{1-q^{-2it}}=-(\log q)\sum_{n\geq 1}q^{-2itn}.
\]
For $A_3$, from \eqref{A'} we have
\[
A'_g(it;it)=\sum_{P\in\mathcal{P}}\frac{\log |P|}{(|P|^{1+2it}-1)(|P|+1)}=\sum_{\substack{P\in\mathcal{P}\\r\geq1}}\frac{\log |P|}{|P|^{r+2itr}(|P|+1)}.
\]
So
\begin{align*}
A_3&=\sum_{\substack{P\in\mathcal{P}\\r\geq1}}\frac{\log |P|}{|P|^{r}(|P|+1)}\frac{1}{\pi}\int_{0}^{\frac{2\pi}{\log q}}\Phi\Big(\frac{tg\log q}{\pi}\Big)|P|^{-2itr}\\
&=\frac{1}{g}\sum_{n\leq \frac N2}\widehat{\Phi}\Big(\frac{n}{g}\Big)\sum_{\substack{P\in\mathcal{P}_{n/r}\\r\geq1}}\frac{d(P)}{|P|^{r}(|P|+1)}.
\end{align*}
Finally, simple calculations show that
\[
A_g(-it;it)=\frac{\zeta_q(2)}{\zeta_q(2-2it)}.
\]
So
\[
\zeta_q(1-2it)A_g(-it;it)=\frac{q(1-q^{2it-1})}{(q-1)(1-q^{2it})}=\frac{1}{(q-1)}-\sum_{n\geq 1}q^{-2itn}.
\]
Hence
\[
A_4=-\frac{\widehat{\Phi}(1)}{g(q-1)}+\frac{1}{g}\sum_{n=g+1}^{N/2}\widehat{\Phi}\Big(\frac{n}{g}\Big).
\]
We thus obtain

\begin{theorem}
Assume that Conjecture \ref{RCF} holds, then we have
\begin{align*}
\Big\langle \Sigma_1&(\Phi,g,D)\Big\rangle_{\mathcal{H}_{2g+1}}  =\widehat{\Phi}(0)-\frac{1}{g} \sum_{n\leq g}\widehat{\Phi}\Big(\frac{n}{g}\Big)+c(\Phi,g)-\frac{\widehat{\Phi}(1)}{g(q-1)}+O_\varepsilon\big(q^{-g-\frac12+\varepsilon g}\big)
\end{align*}
for any $N$, where
\[
c(\Phi,g)=\frac{1}{g}\sum_{n\leq \frac N2}\widehat{\Phi}\Big(\frac{n}{g}\Big)\sum_{\substack{P\in\mathcal{P}_{n/r}\\r\geq1}}\frac{d(P)}{|P|^{r}(|P|+1)}.
\]
\end{theorem}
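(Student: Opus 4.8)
The plan is to derive the $1$-level density directly from the averaged logarithmic derivative supplied by Theorem \ref{RCd}, which is in turn obtained by differentiating Conjecture \ref{RCF} in $\alpha$ and setting $\alpha=\beta=r$. The first step is to convert the sum over the angles $\theta_{j}$ into a contour integral of $L'/L$ via Cauchy's theorem. Since $\Phi$ is entire and the zeros of $L(s,\chi_D)$ lie on the critical line with vertical period $\frac{2\pi}{\log q}$, one recovers $\sum_{j}\Phi(2g\theta_j)$ as the integral of $\frac{L'}{L}(s,\chi_D)\,\Phi\!\big(-i(s-\tfrac12)\tfrac{g\log q}{\pi}\big)$ over the two vertical segments $\mathcal{C}_1,\mathcal{C}_2$ bounding a single period. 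Averaging over $D\in\mathcal{H}_{2g+1}$ and inserting Theorem \ref{RCd} then replaces the inner sum over $D$ by the explicit expression built from the three pieces $\zeta_q'/\zeta_q$, $A_g'(r;r)$, and the dual term $(\log q)q^{-2gr}\zeta_q(1-2r)A_g(-r;r)$, with the error $O_\varepsilon(q^{-g-1/2+\varepsilon g})$ carried along from the Conjecture.

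Next I would combine the two contours. On $\mathcal{C}_2$ I change variables $s\mapsto 1-s$ and apply the functional equation $-\frac{L'}{L}(1-s,\chi_D)=-2g\log q-\frac{L'}{L}(s,\chi_D)$; because $\Phi$ is even, the two $L'/L$ contributions coincide, while the constant $-2g\log q$ produces the leading term $A_1=\widehat{\Phi}(0)$. Before doing so one must verify that the integrand is regular at $r=0$, so that the line of integration can be pushed to $\varepsilon=0$; this is the point where the apparent poles of $\zeta_q(1\pm2r)$ have to be seen to cancel against the zeros contributed by the surrounding factors. The remaining work is then purely a matter of expanding each piece into a Dirichlet (equivalently, prime) series and matching against the Fourier identity $\widehat{\Phi}(n/2g)=\frac{g\log q}{\pi}\int_0^{2\pi/\log q}\Phi\!\big(\tfrac{tg\log q}{\pi}\big)q^{-itn}\,dt$, which turns each period integral into a value of $\widehat{\Phi}$ at a rational point, automatically truncated by the support of $\widehat{\Phi}$.

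I would then evaluate the three surviving terms one at a time. Expanding $\zeta_q'(1+2it)/\zeta_q(1+2it)=-(\log q)\sum_{n\ge1}q^{-2itn}$ yields $A_2=-\frac1g\sum_{n\le N/2}\widehat{\Phi}(n/g)$; expanding $A_g'(it;it)$ through \eqref{A'} as a prime sum and grouping terms by the degree $n=r\,d(P)$ gives exactly $A_3=c(\Phi,g)$; and the dual term, after the closed-form simplification $A_g(-it;it)=\zeta_q(2)/\zeta_q(2-2it)$ and the resulting geometric identity $\zeta_q(1-2it)A_g(-it;it)=\frac{1}{q-1}-\sum_{n\ge1}q^{-2itn}$, produces $A_4=-\frac{\widehat{\Phi}(1)}{g(q-1)}+\frac1g\sum_{g+1\le n\le N/2}\widehat{\Phi}(n/g)$. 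Adding $A_1+A_2+A_3+A_4$, the partial sum in $A_4$ extends the range in $A_2$ from $n\le N/2$ up to $n\le g$, while the support of $\widehat{\Phi}$ keeps this consistent for every $N$; what is left is precisely the asserted main terms together with $O_\varepsilon(q^{-g-1/2+\varepsilon g})$.

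The hard part will be the dual contribution. One must first carry out the Euler-product simplification of $A_g(-it;it)$ to the clean ratio $\zeta_q(2)/\zeta_q(2-2it)$, starting from the definition \eqref{ARC}, and then track the bookkeeping in $A_4$ with care, since it is exactly the partial sum appearing there that fuses with $A_2$ to give the single clean sum $\frac1g\sum_{n\le g}\widehat{\Phi}(n/g)$ and to isolate the arithmetic term $-\widehat{\Phi}(1)/(g(q-1))$. The regularity check at $r=0$ is the only genuinely analytic subtlety; everything else is an organized unwinding of the Ratios recipe already set up above.
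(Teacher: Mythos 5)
Your proposal is correct and follows essentially the same route as the paper's own appendix calculation: Cauchy's theorem over the two vertical segments, insertion of Theorem \ref{RCd}, the functional-equation reflection on $\mathcal{C}_2$ producing $\widehat{\Phi}(0)$, and the term-by-term evaluation of $A_1,\dots,A_4$ via the Fourier identity, including the final fusion of $A_2$ with the partial sum in $A_4$ using the support of $\widehat{\Phi}$. No substantive differences to report.
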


\textbf{Acknowledgments.}
This work started when both authors were attending the ``Number Theory and Function Fields at the Crossroads'' workshop, held in January $2016$ at the University of Exeter. We would like to thank the Heilbronn Institute, the University of Exeter and Julio Andrade for the organization and their hospitality. 
We are also grateful to Daniel Fiorilli, Jon Keating, Micah Milinovich, Zeev Rudnick and Kannan Soundararajan for their helpful comments on the paper. 

\bibliography{Arxiv}
\bibliographystyle{plain}

\end{document}